\definecolor{dmagenta}{rgb}{.4,.1,.5}
\definecolor{007}{rgb}{.0,.0,.7}
\definecolor{dred}{rgb}{.5,.0,.0}
\definecolor{dgreen}{rgb}{.0,.5,.0}
\definecolor{dblue}{rgb}{.0,.0,.5}
\definecolor{violet}{rgb}{.3,.0,.9}
\definecolor{orange}{cmyk}{0,.5,.1,.0}
\definecolor{dcyan}{cmyk}{.5,.0,.0,.0}
\definecolor{dyellow}{cmyk}{.0,.0,.5,.0}
\definecolor{cm}{cmyk}{1,.0,.0,.0}
\numberwithin{equation}{section}
\newtheorem{theorem}{Theorem}[section]
\newtheorem{lemma}{Lemma}[section]
\newtheorem{proposition}{Proposition}[section]
\theoremstyle{definition}
\newtheorem{definition}{Definition}[section]
\theoremstyle{remark}
\newtheorem{remark}{Remark}[section]
\newcommand{\cA}{\mathcal{A}}
\newcommand{\calK}{\mathcal{K}}   
\newcommand{\cP}{\mathcal{P}}
\newcommand{\grad}{\nabla}
\newcommand{\tc}{{\Breve\uptau}}
\newcommand{\Sob}{\mathscr{W}}
\newcommand{\Sobl}{\mathscr{W}_{\mathrm{loc}}}
\newcommand{\Lyap}{\mathcal{V}}
\newcommand{\Usm}{\mathfrak{U}_{\mathrm{SM}}}
\DeclareMathOperator{\Exp}{\mathbb{E}}
\DeclareMathOperator{\Prob}{\mathbb{P}}
\newcommand{\D}{\mathrm{d}}
\newcommand{\M}{\mathbf{M}}
\newcommand{\R}{\mathbb{R}}
\newcommand{\B}{\mathbb{B}}
\newcommand{\Rd}{\mathbb{R}^d}
\newcommand{\NN}{\mathbb{N}}
\newcommand{\Uadm}{\mathfrak{U}}
\newcommand{\I}{\mathbf{I}}
\newcommand{\calB}{\mathcal{B}}
\newcommand{\calC}{\mathcal{C}}
\newcommand{\calF}{\mathcal{F}}
\newcommand{\sF}{\mathfrak{F}}
\newcommand{\transp}{^{\mathsf{T}}}
\newcommand{\al}{\alpha}
\newcommand{\lam}{\lambda}
\newcommand{\gam}{\gamma}
\newcommand{\kap}{\kappa}
\newcommand{\lfl}{\lfloor}
\newcommand{\rfl}{\rfloor}
\newcommand{\eps}{\varepsilon}
\newcommand{\del}{\delta}
\newcommand{\bvnorm}[1]{[\kern-0.45ex[\kern0.1ex #1 \kern0.1ex]\kern-0.45ex]}
\newcommand{\abs}[1]{\lvert#1\rvert}
\newcommand{\norm}[1]{\lVert#1\rVert}
\newcommand{\order}{{\mathscr{O}}} 
\newcommand{\sorder}{{\mathfrak{o}}} 
\newcommand{\df}{:=}
\DeclareMathOperator*{\Argmin}{arg\,min}
\DeclareMathOperator{\diag}{diag}
\begin{document}

\title[Control of queueing systems with help]
{An ergodic control problem for many-server multiclass queueing systems with cross-trained servers}

\author{Anup Biswas}
\address{Department of Mathematics,
Indian Institute of Science Education and Research, 
Dr. Homi Bhabha Road, Pune 411008, India.}
\email{anup@iiserpune.ac.in}

\date{\today}

\begin{abstract}
A $M/M/N+M$ queueing network is considered with $d$ independent customer classes and $d$ server pools in Halfin-Whitt regime.
Class $i$ customers has priority for service in pool $i$  for $i=1, \ldots, d$,
and may access some other pool if the pool  has an idle server and all the servers in pool $i$ are busy. We formulate an ergodic control
problem where the running cost is given by a non-negative convex function with polynomial growth. We show that
the limiting controlled diffusion is modeled by an action space which depends on the state variable. 
We provide a complete analysis
for the limiting ergodic control problem and establish asymptotic convergence of the value functions for the queueing  model.
\end{abstract}

\subjclass[2000]{93E20, 60H30, 35J60}

\keywords{Halfin--Whitt, multiclass Markovian queues, heavy-traffic, cross-training,  scheduling control, Hamilton-Jacobi-Bellman equation, asymptotic  optimality.}

\maketitle

\section{Introduction}\label{S-intro}

In this article we consider a queueing system consisting of
 $d$ independent customer classes and $d$ server pools (or stations).
Each server pool  contains $n$ identical servers. Customers of class $i$ have priority for service in  pool $i$
and this priority is of preemptive type. Therefore a newly arrived job of class $i$ at time $t$ would preempt the service of
a class $j$ job, $j\neq i$, if there is a class $j$ job receiving service from pool $i$ at time $t$.
A customer
from class $j$ may access service from pool $i, j\neq i,$ if and only if there is an empty server in the pool $i$ and all
the servers in the  pool $j$ are busy. 
Therefore service stations are  \textit{cross-trained} to serve nonpriority customers when its own priority customer class is {\it underloaded}.
Customers are also allowed
to renege the system from the queue. The arrival of customers are given by $d$-independent Poisson processes with
parameter $\lam^n_i, i=1, \ldots, d$. By $\mu^n_{ij}$ we denote the service rate of class $i$ customers at station $j$. 
We shall use $\mu_i$ instead of $\mu_{ii}$ for $i=1, \ldots, d$.
The network is assumed to work under Halfin-Whitt setting in the sense that
\begin{equation}\label{C}
\lim_{n\to\infty}\sqrt{n}(1-\frac{\lam^n_i}{n\mu_{i}})\quad \text{exists, for all}\; \; i\in\{1, \ldots, d\}.
\end{equation}
Therefore under \eqref{C} each customer class $i$ is in criticality with respect to pool $i$, $i\in\{1, \ldots, d\}$, i.e., $n\approx\mathbf{r}^n_i + \rho_i \sqrt{\mathbf{r}^n_i}$  for 
some constant $\rho_i$ where
$\mathbf{r}^n_i=\nicefrac{\lambda^n_i}{\mu_{i}}$ is the mean offered load of class $i$ to the pool $i$.
Note that the above criticality condition is different from those generally used in multiclass multi-pool setting \cite{atar-2005}.
This condition could be seen as a generalization to \cite[Assumption~4.12(3)]{ramanan-reiman-1} to the many-server setting.
To elaborate we recall the generalized processor sharing (GPS) network from \cite{ramanan-reiman-1}. In a multiclass GPS network with
$d$ customer classes and single server
the server would use a fraction $\alpha_i$, $(\alpha_1, \ldots, \alpha_d)\in (0, \infty)^d$ is a given probability vector, of the total processing capacity to serve class-$i$ jobs when all the job classes are available in the system, otherwise (that is when positive number of classes are empty) any excess processing 
capacity, normally reserved for the job classes that are empty,
is redistributed among the nonempty classes in proportion to the weight vector $(\alpha_1, \ldots, \alpha_d)$. In this case the conventional heavy traffic condition implies that
$\lim_{n\to\infty}\sqrt{n}(\nicefrac{\mathbf{r}^n_i}{n}-\alpha_i)$ exists for all $i$, see \cite[Assumption~4.12(3)]{ramanan-reiman-1}. Therefore \eqref{C} can be seen as a many-server
analogue to the conventional heavy-traffic condition of GPS network.

Control is given by a matrix value process $Z$ where $Z_{ij}$ denotes the number of class-$i$ customers getting served at 
station $j$. We note that for our model a control $Z$ need not be work-conserving.
The running cost is given by $r(\Hat{Q}^n)$ where $r$ is a non-negative, 
convex function of polynomial growth and $\Hat{Q}^n$
denotes the diffusion-scaled queue length vector i.e., $\Hat{Q}^n=\frac{1}{\sqrt{n}}Q$ where $Q$ is the queue length vector.
 We are interested in the cost function
$$\limsup_{T\to\infty}\;\frac{1}{T}\Exp\Bigl[\int_0^T r(\Hat{Q}^n(s))\, \D{s}\Bigr].$$
The value function $\widehat{V}^n$ is defined to be the infimum of the above cost where the infimum is taken over all admissible controls. One of
the main results of this paper analyze asymptotic behavior of $\widehat{V}^n$ as $n\to\infty$. In Theorem~\ref{T-optimality} we show that
$\widehat{V}^n$ tends to the optimal value of an ergodic control problem governed by certain class of controlled diffusions.
We also study the limiting ergodic control problem and establish the existence-uniqueness results of the value function
in Theorem~\ref{T-HJB}.
 It is worth mentioning
that results like Theorem~\ref{T-optimality} and ~\ref{T-HJB} continue to hold if
one considers other types of convex running cost functions which might depend on
 $\Hat{Z}^n$ (see Remark~\ref{con-rem-1}).
Let us denote by $i\rightarrow j$ ($i\nrightarrow j$) when class-$i$ customers can (resp., can not) access service
from station $j$. In this article
we have concentrated on the situation where $i\rightarrow j$, for all $i, j$, but it is not a necessary condition for our method to work. As noted in Remark~\ref{con-rem-2}, if we impose $i\nrightarrow j$ for some $i\neq j$ in the above queueing model, our results continue to hold without any major change in the arguments.

{\bf Literature review:}\, Scheduling control problems have a rich and profound history in queueing literature. 
The main goal of these
problems is to schedule the customers/jobs in a network in an optimal way. But it is not always possible
to find a simple policy that is optimal. It is well known that for various queueing networks with finitely many servers $c\mu$ policy is optimal
\cite{atar-biswas, biswas-2012, mandel-stolyar, mieghem}. $c\mu$  scheduling rule prioritize 
the service of the job classes according to the index $c_i\mu_i$ (higher index gets priority for receiving service) where $c_i$ denotes the holding cost for class-$i$ and $\nicefrac{1}{\mu_i}$ denotes the mean service time of class-$i$ jobs.
In case of many servers, it is shown in \cite{atar-giat-shim, atar-giat-shim-2} that a similar priority 
policy, known as $c\mu/\theta$-policy, that prioritize the jobs according to the index $c_i\mu_i/\theta_i$, $\theta_i$
being the abandonment rate of class-$i$,  is optimal when
the queueing system asymptotic is considered under fluid scaling and the cost is given by the long run average of a 
linear running cost. But existence of such simple optimal priority policies are rare in Halfin-Whitt setting. 
In general, by Halfin-Whitt setting we mean the number of servers $n$ and the total 
offered load $\mathbf{r}$ scale like $n \approx \mathbf{r} + \rho\sqrt{\mathbf{r}}$ for some constant $\rho$.
See \eqref{HWpara} below for exact formulation for our model.
 There are several papers devoted to the study of control problems in Halfin-Whitt regime. 
 \cite{atar-mandel-rei, harrison-zeevi} studied a control problem with discounted pay-off
 for multiclass single pool queueing network and  asymptotics of the value functions are obtained in \cite{atar-mandel-rei}.
 Later these works are generalized to multi-pool case in \cite{atar-2005}. \cite{atar-man-shai} considered
a simplified multiclass multi-pool control problem with a discounted cost criterion 
where the service rates either depend on the class or
the pool but not the both, and established asymptotic optimality for the scheduling policies.
 Under some assumptions on the holding cost,  a static priority policy is shown to be optimal in \cite{tezcan-dai, dai-tezcan-2011} 
in a multiclass multi-pool queueing network where the cost function is of finite horizon type. \cite{Gurvich-Whitt-2009} studied queue-and-idleness-ratio controls,
 and their associated properties and staffing implications for multiclass multi-pool networks.
 In \cite{ari-bis-pang}
the authors considered an ergodic control problem for multiclass many-server queueing network and established  convergence of the value functions. 
Some other works that have considered ergodic control problems for queueing networks are as follows: \cite{budhi-ghosh-lee} considers
an ergodic control problem in the conventional heavy-traffic regime and establishes asymptotic optimality, \cite{kim-ward} studies admission
control problem with an ergodic cost criterion for a single class $M/M/N+M$ queueing network. For an inverted 'V' model it is shown in \cite{Armony-05}
that the fastest-server-first policy is asymptotically optimal for minimizing the steady-state expected queue length and waiting time.
\cite{Ward-Armony-13} considered a blind fair routing policy for multiclass multi-pool networks and used simulations to validate the performance of the blind fair routing policies comparing them with non-blind policies derived from the limiting diffusion control problem. Recently, ergodic
control of multiclass multi-pool queueing networks is considered in \cite{ari-pang-16} where the authors have addressed existence and uniqueness
of solutions of the HJB (Hamilton-Jacobi-Bellman) equation associated to the limiting diffusion control problem. Asymptotic optimality results for
the N-netwrok queueing model are obtained in \cite{ari-pang-N}. Most of these above works \cite{atar-mandel-rei, harrison-zeevi,
atar-2005, atar-man-shai, ari-bis-pang}
on many server networks consider work-conserving policies as their \textit{admissible} controls. It is necessary to point out some key
 differences of the present queueing network with the one considered in \cite{atar-2005, ari-pang-16}. First of all the
 Halfin-Whitt condition above (see \eqref{C}) is different from \cite[p. 2614]{atar-2005} and therefore, the diffusion
 scalings of the customer count processes are also different. Moreover, the collection of admissible controls in \cite{atar-2005}
  includes a wider class of 
 scheduling policies which are jointly work-conserving and need not follow any class-to-pool priority, whereas
 for the queueing model under consideration every admissible control must obey the class-to-pool priority constrain.
 The particular nature of our network allow us to consider an optimal ergodic control problem with a general
 running cost function and to obtain asymptotic optimality (Theorem~\ref{T-optimality}) under standard assumptions on the
 service rates.

{\bf Motivation and comparison:}\, The above model is realistic in many scenario. For instance, in a 
call center different service stations are designed to serve certain
type of customers and they may choose to help other type of customers when one or many servers are idle in the station.
It is known that cross-training of customer sales representatives in inbound call center reduces the average number of
customers in queue. We refer to \cite{IKO} and the references therein for a discussion on labor cross-training and its effect on the performance on queueing networks. 
Since it is expensive to train every sales representative in all skills it becomes important to understand the optimal cross-training structure of the agents which
reduces the  average number of customers in queue. \cite{IKO} uses \textit{average shortest path length} as a metric to predict the more effective cross-training structures in terms
of customer waiting times. Our model is a variant of these networks.
As mentioned before, we may have $i\nrightarrow j$ for some $i\neq j$ in our queueing network which should be interpreted as the inability of station $j$ to serve
class $i$ jobs. It is also reasonable to have class-to-pool priority when the agents in pool $i$ are primarily trained to serve jobs of class $i, i\in\{1, \ldots, d\},$
 and might not be efficient in serving class $j, i\neq j$.
It should be noted that for our queueing model we have fixed a cross-training structure and we are trying to investigate the optimal dynamic scheduling policy that will optimize the pay-off function.

Our model also bears resemblance with Generalized Processor Sharing (GPS) models
from \cite{ramanan-reiman-1, ramanan-reiman-2}.
 In fact, our model can be scaled 
 to a single pool case where each class of customers have priority in accessing a fixed fraction of the total number of servers and they may get access to other
 servers, fixed for other customer classes, if the queues of other customer classes are empty. It should be observed that the multi-pool version is more general than the
 single pool version. For instance, it is not natural to have $\mu_{ij}\neq\mu_{i}$, for $i\neq j$, in the setting of a single pool with homogeneous
 servers, but this is not the case for a multi-pool model. Therefore we stick to the multi-pool network model.
A legitimate question for these processor sharing type model is that whether the GPS type policy is optimal or not for the pay-off function considered above. 
Motivated by this question a similar control problem is considered in \cite{BISW} for a queueing model with finitely many servers and it is shown that
the value function associated to the limiting controlled diffusion model solves a non-linear Neumann boundary value problem. The
solution is obtained in the viscosity sense and therefore, it is hard to extract any information about the optimal control, even for the 
diffusion control problem.
The present
work is also motivated by a similar question but for the many-server queueing network. 
 One  motivation of the
present work is to get some insight about the optimal control. The motivating question here is 
if we allow the non-priority classes of pool $i$ to access the servers of pool $i$ in some fixed proportion when pool $i$ has some free servers then such scheduling
policy would be optimal or not.
In the present work
we characterize the value functions of the queueing model by its limit and construct a
sequence of admissible policies that are asymptotically optimal.
Though theoretically we can find a minimizer for the limiting HJB, it is not easy to compute it explicitly or numerically. 
One of the future directions is to
compute the minimizer and compare its performance with the GPS type scheduling.

The  methodology of this problem is not immediate from any existing work. In general, the main idea is to convert
such problems to a controlled diffusion problem and analyze the corresponding Hamiltion-Jacobi-Bellman(HJB) equation to
extract information about the minimizing policies. All the exiting works \cite{atar-mandel-rei, harrison-zeevi, atar-2005, ari-bis-pang}
use work-conservative properties of the controls to come up with an action space that does not depend on the state variable.
But as we mentioned above that our policies need not be work-conserving. Also there is an obvious action space that
one could associate to our model (see \eqref{con-set}). Unfortunately, this action space depends on the state variable. In general,
such action spaces are not very favorable for mathematical analysis. Existence of measurable selectors and regularity of Hamiltonian
do not become obvious due to the dependency of action space on the state variable.
Interestingly, for our model we could show that the structure of drift and convexity of the running cost play
 in favour of our analysis and we can work with such state dependent action spaces. In particular, we obtain uniform
stability (Lemma~\ref{lem-stability}) and also show that the Hamiltonian is locally H\"{o}lder continuous (Lemma~\ref{lem-holder}, Lemma~\ref{lem-holder-0}). 
Since our action space depends on state variable we need to verify that the Filipov's criterion holds \cite[Chapter~18]{ali-bor}
and then by using Filipov's implicit function theorem we establish existence of a measurable minimizer for the Hamiltonian.  This is done
in Theorem~\ref{T-HJB}. But such a minimizer need not be continuous, in general, and one often requires a continuous minimizing
selector to construct $\eps$-optimal policies for the value functions $\widehat{V}^n$ (see \cite{atar-mandel-rei, ari-bis-pang}). 
With this in mind, we consider a perturbed problem where we perturb
the cost by a strictly convex function and show that the perturbed hamiltonian has a unique continuous selector (Lemma~\ref{lem-cont}).
In Theorem~\ref{T-HJB} below we show that this continuous minimizing selector is optimal for the perturbed ergodic control
problem and can be used to construct $\eps$-optimal policies (Theorem~\ref{T-upper}). To summarize our contribution in this 
paper, we have

\begin{itemize}
\item considered an ergodic control problem for the $M/M/N+M$ queueing network with  labor cross-training and identified the limit of the value functions,
\item solved the limiting HJB and established asymptotic optimality.
\end{itemize}

{\bf Notations:} By $\Rd$ we denote the $d$-dimensional Euclidean space equipped with the Euclidean norm $\abs{\cdot}$.
We denote by $\R^{d\times d}$ the set of all $d\times d$ real matrices and we endow this space with the usual metric.
For $a, b\in\R$, we denote the maximum (minimum) of $a$ and $b$  as $a\vee b$ ($a\wedge b$, respectively).
We define $a^+=a\vee 0$ and $a^-=- a\wedge 0$. $\lfl a\rfl$ denotes the largest integer that is small or equal to $a\in\R_+$.
Given a topological space $\mathcal{X}$ and $B\subset\mathcal{X}$,
the interior, closure, complement and boundary of $B$ in $\mathcal{X}$ are denoted by $B^0, \bar{B}, B^c$ and $\partial B$, respectively. $1_B$ is used to denote the characteristic function of the set $B$.
By $\calB(\mathcal{X})$ we denote the Borel $\sigma$-field of $\mathcal{X}$. Let $\calC([0, \infty) : \mathcal{X})$ 
be the set of all continuous functions from $[0, \infty)$ to $\mathcal{X}$. Given a path $f:\R_+\to\R$, we denote
by $\Delta f(t)$, jump of $f$ at time $t$, i.e., $\Delta f(t)= f(t)- f(t-)$.
 We define $\calC^k(\Rd), k\in\NN,$
as the set of all real valued $k$ times continuously differentiable functions on $\Rd$. For $\alpha\in(0,1)$,
$\calC^{k, \alpha}_{\mathrm{loc}}(\Rd)$ denotes the set of all real valued $k$-times continuously differentiable
 function on $\Rd$ with
its $k$-th derivative being locally $\alpha$-H\"{o}lder continuous on $\Rd$. 
For any any domain $D\subset\Rd$, $\Sob^{k, p}(D), p\geq 1,$ denotes the set of all $k$-times weakly differentiable functions 
that is in $L^p(D)$ and all its weak derivatives up to order $k$ are also in $L^p(D)$. By $\Sobl^{k, p}(D), p\geq 1,$ we denote the collection
of function that are $k$-times weakly differentiable and all its derivatives up to order $k$ are in $L^p_{\mathrm{loc}}(D)$.
 $\calC_{\text{pol}}(\Rd_+)$ denotes the
set of all real valued continuous functions $f$ that have at most polynomial growth i.e., 
$$\limsup_{|x|\to 0}\frac{\abs{f(x)}}{\abs{x}^k}= 0,\quad \text{for some }\; k\geq 0.$$ 
For a measurable $f$ and measure $\mu$ we denote $\langle f, \mu\rangle = \int f\, \D{\mu}$.
Let $\order(g)$ denote the space of function $f\in\calC(\Rd)$ such that
$$\sup_{x\in\Rd}\frac{\abs{f(x)}}{1+\abs{g(x)}}\;<\; \infty.$$
By $\sorder(g)$ we denote the subspace of $\order(g)$ containing function $f$ satisfying
$$\limsup_{\abs{x}\to\infty}\,\frac{\abs{f(x)}}{1+\abs{g(x)}}\;=\; 0.$$
Infimum over empty set is regarded as $+\infty$. $\kappa_1, \kap_2, \ldots,$ are deterministic positive constants whose value might change from line to line.

The organization of the paper is as follows. The next section introduces the setting of our model and state our main result 
on the convergence of the value functions. In Section~\ref{S-ergodic} we formulate the limiting controlled diffusion and state
our results on the ergodic control problem with state dependent action space. Section~\ref{S-spatial} obtains various results 
for the controlled diffusion and its HJB which are used to prove Theorem~\ref{T-HJB} from Section~\ref{S-ergodic}. Finally, in 
Section~\ref{S-optimality} we obtain asymptotic lower and upper bounds for the value functions.

\section{Setting and main result}\label{S-main}

Let $(\Omega,\mathfrak{F},\Prob)$ be a given complete probability 
space and all the
stochastic variables introduced below are defined on it.
The expectation w.r.t. $\Prob$ is denoted by $\Exp$. 
We consider a multiclass Markovian many-server queueing system which 
consists of $d$ customer classes and $d$ server pools.
Each server pool is assumed to contain $n$ identical servers (see Figure~\ref{plot}).

\begin{figure}
\centering
\def\svgwidth{0.6\columnwidth}
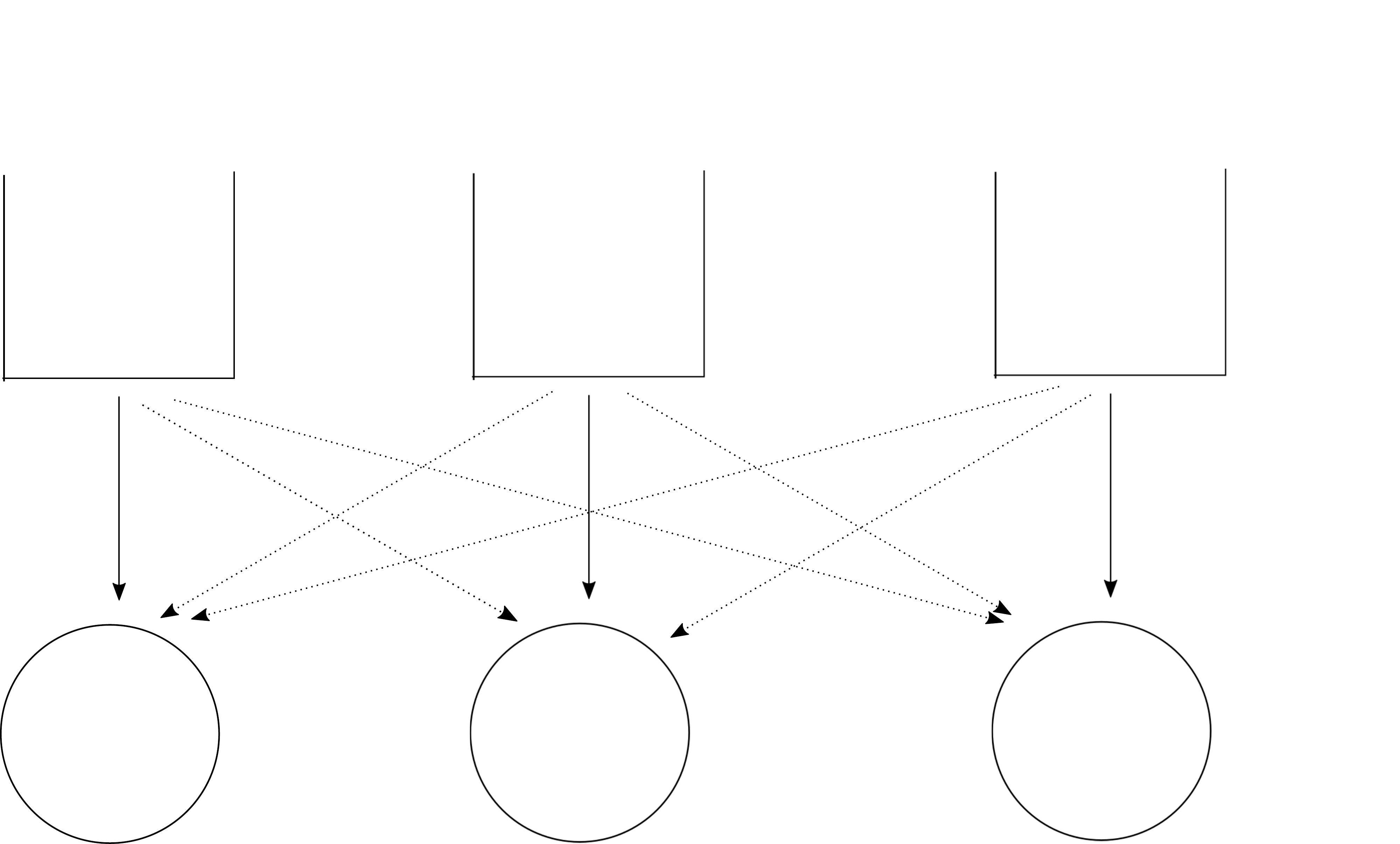
\caption{$d=3$. Dotted lines represents access of other classes to the pool in absence of its priority class customers
in the queue.}\label{plot}
\end{figure}

The system buffers are assumed to have infinite capacity.
Customers of class $i\in \{1,\dotsc,d\}$ arrive according to a Poisson process 
with rate $\lambda^{n}_{i}>0$. Upon arrival, customers enter the queue of their respective 
classes if not being processed. Customers of each class are 
served in the first-come-first-serve (FCFS) service discipline. 
Customers can abandon the system while waiting in the queue. 
Patience time of the customers are assumed to be exponentially distributed and class dependent.
Customers of class $i, i\in\{1, \ldots, d\},$ renege at  rate $\gamma^n_i$. We also assume that
no customer renege while in service. Customers of class $i$ have highest priority in accessing service 
from station $i$.
A customer of class $i, i\in\{1, \ldots, d\},$ is allowed to access service
from station $j, j\in\{1,\ldots, d\}, i\neq j,$ if and only if the $j$-th queue is empty and all the servers in the $i$-th pool are occupied by class-$i$ customers.
By $\mu^n_{ij}, i,j \in\{1, \ldots, d\},$ we denote the service rate of class $i$ at  station $j$. 
We denote $\mu^n_{ii}$ by $\mu^n_i$ for $i\in\{1, \ldots, d\}$.
We assume that customer arrivals, service and abandonment of all classes 
are mutually independent.

\smallskip
\emph{The Halfin-Whitt Regime.}
We study this queueing model in the Halfin-Whitt regime
(or the Quality-and-Efficiency-Driven (QED) regime).
We consider a sequence of systems indexed by $n$ where the arrival rates $\lam^n_i$ and $n$
grows to infinity at certain rates.
Let $\mathbf{r}^{n}_{i} \df \nicefrac{\lambda_{i}^{n}}{\mu^{n}_{i}}$
be the mean offered load of class $i$ customers.
In the Halfin-Whitt regime, the parameters are assumed to satisfy the following:
as $n\rightarrow \infty$,
\begin{equation}\label{HWpara}
\begin{gathered}
\frac{\lambda^{n}_{i}}{n} \;\to\;\lambda_{i}\;>\;0\,,\qquad
\mu_{i}^{n} \;\to\;\mu_{i}\;>\;0\,,\qquad \mu_{ij}^{n} \;\to\;\mu_{ij}\;\geq \;0,\,\, \text{for}\,\; i\neq j,\quad\\[5pt]
\gamma_{i}^{n} \;\to\;\gamma_{i}\;>\;0\, ,\,\quad
\frac{\lambda^{n}_{i} - n \lambda_{i}}{\sqrt n} \;\to\;\Hat{\lambda}_{i}\,,\qquad
{\sqrt n}\,(\mu^{n}_{i} - \mu_{i}) \;\to\;\Hat{\mu}_{i}\,,\\[5pt]
\frac{\mathbf{r}^{n}_{i}}{n} \;\to\;1 \;=\;
\frac{\lambda_{i}}{\mu_{i}}\, .
\end{gathered}
\end{equation}
We note that $\mu_{ij},\, i\neq j, $ could also be $0$ for some $i\neq j$. $\mu_{ij}=0$ could be understood as a situation where
servers at station $j$ are very inefficient in serving class-$i$ customers. 

\smallskip
\emph{State Descriptors.}
Let $X_{i}^{n} = \{X_{i}^{n}(t)\,\colon\,t\ge 0\}$ be the total number of class
$i$ customers in the system and $Q_{i}^{n} = \{Q_{i}^{n}(t)\,\colon\,t\ge 0\}$ be
the number of class $i$ customers in the queue. By
$Z_{ij}^{n}, i,\, j\in\{1, \ldots, d\},$ we denote
the number of class $i$ customers at the station $j$.
As earlier we denote $Z^n_{ii}$ by $Z^n_i$ for $i\in\{1, \ldots, d\}$.
The following basic relationships hold for these processes:
for each $t\ge 0$, and $i =1,\dotsc,d$,
\begin{equation}\label{rel1}
\begin{split}
&X_{i}^{n}(t)\;=\;Q_{i}^{n}(t) + Z^n_i + \sum_{j: j\neq i} Z_{ij}^{n}(t)\,,\\[5pt]
Q_{i}^{n}(t)\;\ge\;0\,,
\quad &Z_{ij}^{n}(t)\;\ge\;0\,,\quad \text{and} \quad Z^n_i + \sum_{k: k\neq i} Z_{ki}^{n}(t)\;\le\;n\,.
\end{split}
\end{equation}
Let
$\bigl\{A_{i}^{n}, S_{i}^{n}, S_{ij}^{n}, R_{i}^{n}\,,~i , j= 1,\dotsc,d\bigr\}$
be a collection of independent rate-$1$ Poisson processes.
Define
\begin{align*}
\Tilde{A}_{i}^{n}(t)&\;\df\;A_{i}^{n}(\lambda^{n}_{i} t)\,,\quad
\Tilde{S}_{i}^{n}(t)\;\df\;
S_{i}^{n}\biggl(\mu_{i}^{n} \int_{0}^{t} Z_{i}^{n}(s)\,\D{s}\biggr)\,,\\[5pt]
\Tilde{S}_{ij}^{n}(t)&\;\df\;
S_{ij}^{n}\biggl(\mu_{ij}^{n} \int_{0}^{t} Z_{ij}^{n}(s)\,\D{s}\biggr)\,,\quad
\Tilde{R}_{i}^{n}(t) \;\df\;
R_{i}^{n} \biggl(\gamma_{i}^{n} \int_{0}^{t} Q_{i}^{n}(s) \,\D{s}\biggr)\,.
\end{align*}
Then the dynamics takes the form
\begin{equation}\label{xp1}
X_{i}^{n}(t)\;=\;X_{i}^{n}(0) + \Tilde{A}_{i}^{n}(t)
- \Tilde{S}_{i}^{n}(t) -\sum_{j:j\neq i} \Tilde{S}_{ij}^n(t)- \Tilde{R}_{i}^{n}(t)\,,
\quad t\ge 0\,,~i = 1,\dotsc,d\,.
\end{equation}

\smallskip
\emph{Scheduling Control.}
We will consider policies that are non-anticipative. We also allow preemption.
Under these policies every customer class and its associated station
 must follow a work-conserving constrain in the following sense: for all $i\in\{1, \ldots, d\}$,
\begin{equation}\label{xp2}
(X^n_i(t)-n)^+ = Q^n_i(t) + \sum_{j: j\neq i}Z^n_{ij}(t), \quad t\geq 0.
\end{equation}
Combining \eqref{rel1} and \eqref{xp2} we see that
\begin{equation}\label{xp3}
Z^n_i = X^n_i \wedge n, \quad \i\in{1, \ldots, d}, \quad t\geq 0.
\end{equation}
Therefore, when a server from station $i$-becomes free and there are no customers of class-$i$ waiting in the queue,
 the server may process a customer of class $j, i\neq j$. Also a customer of class $i$
does not receive service from a server at the station $j, j\neq i,$ if there is an empty server at
station $i$. Service preemption is allowed,
i.e., service of a customer class can be interrupted at any time to serve some
other class of
customers and the original service is resumed at a later time, possibly by a server at some other station. It should be noted that a
policy need not be work-conserving. For instance, it could happen that under some policy there are empty servers at station
$j$ but there could be queue of class $i, \, i\neq j$.

Define the $\sigma$-fields as follows
\begin{align*}
\mathcal{F}^{n}_{t} &\;\df\;
\sigma \bigl\{ X^{n}(0),\;\Tilde{A}_{i}^{n}(t),\;
\Tilde{S}_{i}^{n}(t),\; \Tilde{S}^n_{ij}(t),\;\Tilde{R}_{i}^{n}(t)\,\colon\,
i =1,\dotsc,d\,,~ 0 \le s \le t\bigr\} \vee \mathcal{N}\,,
\intertext{and}
\mathcal{G}^{n}_{t} &\;\df\;
\sigma \bigl\{\delta\Tilde{A}_{i}^{n}(t,r),\;\delta\Tilde{S}_{i}^{n}(t,r),\; \del\Tilde{S}^n_{ij}(t),\;
\delta\Tilde{R}_{i}^{n}(t,r)\,\colon\,i =1,\dotsc,d\,,~ r\ge 0 \bigr\}\,,
\end{align*}
where
\begin{align*}
\delta\Tilde{A}_{i}^{n}(t,r)&\;\df\; \tilde{A}_{i}^{n}(t+r)
-\tilde{A}_{i}^{n}(t)\,,\quad
\delta\Tilde{S}_{i}^{n}(t,r)\;\df\;
S_{i}^{n}\biggl(\mu_{i}^{n}\int_{0}^{t} Z_{i}^{n}(s)\,\D{s}+\mu_{i}^{n}\, r\biggr)
-\Tilde{S}_{i}^{n}(t)\,,
\\[5pt]
\delta\Tilde{S}_{ij}^{n}(t,r) &\;\df\;
S_{ij}^{n}\biggl(\mu_{ij}^{n}\int_{0}^{t} Z_{ij}^{n}(s)\,\D{s}+\mu_{ij}^{n}\, r\biggr)
-\Tilde{S}_{i}^{n}(t)\,,\\[5pt]
\delta\Tilde{R}_{i}^{n}(t,r) &\;\df\;
R_{i}^{n}\biggl(\gamma_{i}^{n}\int_{0}^{t} Q_{i}^{n}(s)\,\D{s}
+\gamma_{i}^{n}r\biggr)-\Tilde{R}_{i}^{n}(t)\,,
\end{align*}
and $\mathcal{N}$ is the collection of all $\Prob$-null sets.
The filtration $\{\mathcal{F}^{n}_{t}\,,\;t\ge0\}$ represents the information
available up to time $t$ while $\mathcal{G}^{n}_{t}$ contains
the information about future increments of the processes.

We say that a  control policy is \emph{admissible} if it satisfies \eqref{xp2} (or, equivalently \eqref{xp3}) and,
\begin{itemize}
\item[(i)]
$Z^{n}(t)$ is adapted to $\calF^{n}_{t}$,
\smallskip
\item[(ii)]
$\mathcal{F}^{n}_{t}$ is independent of $\mathcal{G}^{n}_{t}$ at each time
$t\ge 0$,
\smallskip
\item[(iii)]
for each $i=1,\dotsc,d$, and $t\ge 0$, the process
$\delta\Tilde{S}_{i}^{n}(t,\cdot\,)$ $\Bigl(\delta\Tilde{S}_{ij}^{n}(t,\cdot\,)\Bigr)$
agrees in law with $S_{i}^{n}(\mu_{i}^{n}\,\cdot\,)$ $\Bigl(S_{ij}^{n}(\mu_{ij}^{n}\,\cdot\,)\Bigr)$, and the process
$\delta\Tilde{R}_{i}^{n}(t,\cdot\,)$
agrees in law with $R_{i}^{n}(\gamma_{i}^{n}\,\cdot\,)$.
\end{itemize}
By criterion (iii) above the increments of the processes have same distribution as the original processes in addition to being independent 
of $\mathcal{F}^N_t$ (by (ii) above).
We denote the set of all admissible control policies
$(Z^n, \mathcal{F}^{n}, \mathcal{G}^{n})$ by $\Uadm^{n}$. 

\subsection{Control problem formulation} 
\label{S-diffcontrol}

Define the diffusion-scaled processes
$$\Hat{X}^{n} = (\Hat{X}^{n}_{1},\dotsc,\Hat{X}^{n}_{d})\transp\,,\quad
\Hat{Q}^{n} = (\Hat{Q}^{n}_{1},\dotsc,\Hat{Q}^{n}_{d})\transp\,,
~\text{and}\quad
\Hat{Z}^{n} = \Bigl[\Hat{Z}^{n}_{ij}\Bigr]\,,$$
by
\begin{equation}\label{dc1}
\begin{split}
\Hat{X}^{n}_{i}(t) &\;\df\;\frac{1}{\sqrt{n}}(X^{n}_{i}(t)-  n )\,,\\[5pt]
\Hat{Q}^{n}_{i}(t) &\;\df\;\frac{1}{\sqrt{n}} Q^{n}_{i}(t)\,,\quad 
\Hat{Z}^n_{ij}\; \df\; \frac{1}{\sqrt{n}} Z^n_{ij}, \; i\neq j,\\[5pt]
\Hat{Z}^{n}_{i}(t) &\;\df\;\frac{1}{\sqrt{n}}(Z^{n}_{i}(t)-  n )
\end{split}
\end{equation}
for $t\ge 0$.
By \eqref{xp1} and \eqref{xp3}, we can express $\Hat{X}^{n}_{i}$ as
\begin{align}\label{dc2}
\Hat{X}^{n}_{i}(t) &\;=\;\Hat{X}^{n}_{i}(0) +\ell_{i}^{n} t 
+ \mu_{i}^{n} \int_{0}^{t} (\Hat{X}_{i}^{n})^-(s)\,\D{s}-\sum_{j:j\neq i}\mu^n_{ij}\int_0^t\Hat{Z}^n_{ij}(s)\, \D{s}
- \gamma_{i}^{n} \int_{0}^{t} \Hat{Q}^{n}_{i}(s)\,\D{s}\\[5pt]
&\mspace{20mu} + \Hat{M}_{A,i}^{n}(t) - \Hat{M}_{S,i}^{n}(t)-\sum_{j:j\neq i} \Hat{M}_{S, ij}^n(t)
- \Hat{M}_{R,i}^{n}(t)\,,\nonumber
\end{align}
where $\ell^{n}= (\ell_{1}^{n},\dotsc,\ell_{d}^{n})\transp$ is defined as
\begin{equation*}
\ell_{i}^{n} \;\df\;\frac{1}{\sqrt{n}}(\lambda_{i}^{n}
- \mu_{i}^{n} n)= \frac{\lam^n_i-n\lam_i}{\sqrt{n}}-\sqrt{n}(\mu^n_i-\mu_i)\,,
\end{equation*}
and
\begin{equation}\label{dc3}
\begin{split}
\Hat{M}_{A,i}^{n}(t) &\;\df\;\frac{1}{\sqrt{n}}(A_{i}^{n}(\lambda^{n}_{i} t )
- \lambda^{n}_{i} t)\,,\\[5pt]
\Hat{M}_{S,i}^{n}(t) &\;\df\;\frac{1}{\sqrt{n}} 
\left( S_{i}^{n}\left(\mu_{i}^{n} \int_{0}^{t} Z_{i}^{n}(s)\,\D{s}\right) 
- \mu_{i}^{n} \int_{0}^{t} Z_{i}^{n}(s)\,\D{s} \right)\,,\\[5pt]
\Hat{M}_{S,ij}^{n}(t) &\;\df\;\frac{1}{\sqrt{n}} 
\left( S_{ij}^{n}\left(\mu_{i}^{n} \int_{0}^{t} Z_{ij}^{n}(s)\,\D{s}\right) 
- \mu_{i}^{n} \int_{0}^{t} Z_{ij}^{n}(s)\,\D{s} \right)\,,\\[5pt]
\Hat{M}_{R,i}^{n}(t) &\;\df\;\frac{1}{\sqrt{n}} 
\left( R_{i}^{n} \left(\gamma_{i}^{n} \int_{0}^{t} Q_{i}^{n}(s) \,\D{s} \right)
- \gamma_{i}^{n} \int_{0}^{t} Q_{i}^{n}(s) \,\D{s} \right),
\end{split}
\end{equation}
are square integrable martingales w.r.t. the filtration $\{\calF^{n}_{t}\}$.

Note that by \eqref{HWpara}
$$\ell_{i}^{n} \;=\;
\;\xrightarrow[n\to\infty]{}\;\ell_{i}\;\df\;
\Hat{\lambda}_{i}-  \Hat{\mu}_{i}\,.$$

By $\R^{d\times d}_+$ we denote the set of real matrices with non-negative entries. Define
$$\M \;\df\;\bigl\{m \in \R^{d\times d}_+\,\colon\,u_{ii}=0, \, \sum_{k:k\neq i}u_{ki}\;\leq\; 1,\; \forall\; i\bigr\}\,.$$ 
For any $x\in\Rd$, we define
\begin{equation}\label{con-set}
\M(x)\;\df\; \{u\in\M : \sum_{j: j\neq i}m_{ij}\, x_j^- \;\leq\; x^+_i,\; \forall\; i\}.
\end{equation}
It is easy to see that $\M(x)$ is a non-empty convex and compact subset of $\M$ for all $x\in\Rd$. Also
$0\in\M(x)$, for all $x\in\Rd$.
From \eqref{rel1}, \eqref{xp2} and \eqref{xp3} we have for $i\in\{1, \ldots, d\}$,
\begin{equation}\label{01}
\begin{split}
\sum_{k: k\neq i} \Hat{Z}^n_{ki}& \;\leq\; (\Hat{X}^n_i)^-,
\\[5pt]
\sum_{j:j\neq i} \Hat{Z}^n_{ij} &\;\leq \; (\Hat{X}^n_i)^+.
\end{split}
\end{equation}
Define
$$\Hat{U}^n_{ki}(t)\; \df\; \frac{\Hat{Z}^n_{ki}(t)}{(\Hat{X}^n_i(t))^-},\quad t\geq 0\,,$$
where we fix $\Hat{U}^n_{ki}(t) = 0$ if $(\Hat{X}^n_i(t))^-=0$.
We also set $\Hat{U}^n_{ii}(t) = 0$, for all $i\in\{1, \ldots, d\}$, and $t\geq 0$.
Therefore using \eqref{01} we obtain,
\begin{equation}\label{02}
\begin{split}
\sum_{k: k\neq i} \Hat{U}^n_{ki}(t)& \;\leq\; 1,
\\
\sum_{j:j\neq i} \Hat{U}^n_{ij}(t)\, (\Hat{X}^n_j(t))^-&\;\leq \; (\Hat{X}^n_i(t))^+.
\end{split}
\end{equation}
Thus $\Hat{U}^n(t)\in\M(\Hat{X}^n(t))$ for all $t$ and $\Hat{U}^n(t)$ is $\mathcal{F}^n_t$ adapted.
Also $\Hat{U}^{n}_{ij}$ represents the fraction of the number of servers $(X^n_j-n)^-$ at station $j$ that are serving
class-$i$ customers.
As we show later, it is convenient to view $\Hat{U}^{n}(t)$ as the control.

\subsubsection{The cost minimization problem}

We next introduce the running cost function for the control problem.
Let $r\colon \Rd_{+} \rightarrow \R_{+}$ be a given function satisfying 
\begin{equation}\label{cost1}
0 \;\le\;r(x) \;\le\;c_{1}(1+\abs{x}^{m})
\qquad\text{for some}~m \ge 1\,,
\end{equation}
and some positive constant $c_1$. 
We also assume that $r$ is convex and therefore, locally Lipschitz.
 For example,
if we let $h_{i}, h_i\geq 0,$ be the holding cost rate for class $i$ customers, then some 
of the typical running cost functions are the following:
\begin{equation*}
r(x) \;=\;\sum_{i=1}^{d} h_{i} x_{i}^{m_i}\,,\quad \min_{i}\, m_i\ge 1\,.
\end{equation*}
These running cost functions evidently satisfy the condition in \eqref{cost1}. 

Given the initial state $X^{n}(0)$ and an admissible scheduling 
policy $Z^{n} \in \Uadm^{n}$, we 
define the diffusion-scaled cost function as 
\begin{equation}\label{costd1}
J(\Hat{X}^{n}(0),\Hat{Z}^{n}) \;\df\;\limsup_{T \to \infty}\;
\frac{1}{T}\,\Exp\biggl[ \int_{0}^{T} r(\Hat{Q}^{n}(s)) \,\D{s} \biggr]\,,
\end{equation}
where the running cost function $r$ satisfies \eqref{cost1}.
Then, the associated cost minimization problem is defined by
\begin{equation}\label{E-Vn}
\widehat{V}^{n}(\Hat{X}^{n}(0)) \;\df\;\inf_{Z^{n}\in \Uadm^{n}}\;
J(\Hat{X}^{n}(0),\Hat{Z}^{n})\,.
\end{equation}

We refer to $\widehat{V}^{n}(\Hat{X}^{n}(0))$ as the
\emph{diffusion-scaled value function} given 
the initial state $\Hat{X}^{n}(0)$ for the $n^{\rm th}$ system. 

From \eqref{xp2} we see that for $i\in\{1, \ldots, d\}$, and $t\geq 0$,
$$\Hat{Q}^n_i(t)\;=\;(\Hat{X}^n_i(t))^+ - \sum_{j: j\neq i}\Hat{U}^n_{ij}(t)\, (\Hat{X}^n_j(t))^-.$$
Therefore redefining $r$ as 
\begin{equation}\label{05}
r(x,u) \;=\; r(x_1^+-\sum_{j:j\neq 1} u_{1j}x_j^-, \ldots, x_d^+-\sum_{j:j\neq d} u_{dj}x_j^-), \quad u\in\M(x),
\end{equation}
we can rewrite the control problem as 
$$\widehat{V}^{n}(\Hat{X}^{n}(0))\;=\;\inf\;\Tilde{J}(\Hat{X}^{n}(0),
\Hat{U}^{n})\,,$$
where
\begin{equation}\label{cost34}
\Tilde{J}(\Hat{X}^{n}(0),\Hat{U}^{n})\;\df\; \limsup_{T\to\infty}\;
\frac{1}{T}\,\Exp \biggl[ \int_{0}^{T} 
r\bigl(\Hat{X}^{n}(s),\Hat{U}^{n}(s)\bigr) \,\D{s} \biggr]\,,
\end{equation}
and the infimum is taken over all admissible pairs $(\Hat{X}^{n},\Hat{U}^{n})$ 
satisfying \eqref{02}. Hence $\Hat{U}^n(t)\in\M(\Hat{X}^n(t))$, almost surely,  for all $t\geq 0$.

For simplicity we assume that the initial condition $\Hat{X}^{n}(0)$ 
is deterministic and $\Hat{X}^{n}(0)\to x$, as $n\to\infty$, for some $x\in\Rd$.

\subsubsection{The limiting controlled diffusion process}
As in \cite{atar-mandel-rei,harrison-zeevi, ari-bis-pang}, the analysis will be done by studying the limiting controlled 
diffusions.
One formally deduces that, 
provided $\Hat{X}^{n}(0) \to x$, there exists a limit $X$ for $\Hat{X}^{n}$ on
every finite time interval, and the limit process $X$ is a
$d$-dimensional diffusion process, that is,
\begin{equation}\label{dc4}
\D X_{t}\;=\;b(X_{t},U_{t}) \,\D{t} + \Sigma\,\D W_{t}\,,
\end{equation}
with initial condition $X_{0}\;=\;x$.
In \eqref{dc4} the drift
$b(x,u)\colon\Rd\times\M \rightarrow \Rd$ takes the form
\begin{equation}\label{dc5}
b_i(x,u)\;=\;\ell_i + \mu_i x_i^- - \sum_{j: j\neq i}\mu_{ij}u_{ij} x_j^{-} -\gamma_i \Bigl(x_i^+ - \sum_{j:j\neq i} u_{ij} x_j^-\Bigr)\,,
\end{equation}
with
\begin{align*}
\ell &\;\df\;(\ell_{1},\dotsc,\ell_{d}) \transp\,.
\end{align*}
The  control $U_{t}$ lives in $\M$ and is non-anticipative,
$W$ is a $d$-dimensional standard Wiener process independent of 
the initial condition $X_{0}=x$, and the covariance matrix is given by
$$\Sigma \Sigma\transp\;=\;\diag\,(2 \lambda_{1},\dotsc,2 \lambda_{d})\,.$$ 
A formal derivation of the drift in \eqref{dc5} can be obtained from 
\eqref{xp2} and \eqref{dc2} . We also need the control to satisfy $U_t\in\M(X(t))$ for all $t\geq 0$.
We define $q:\Rd\times\M\to\Rd$ as follows,
\begin{equation}\label{0000}
q_i(x, u) = x_i^+ - \sum_{j:j\neq i} u_{ij} x_j^-, \quad i\in\{1, \ldots, d\}.
\end{equation}
Thus from \eqref{dc5} we get that
\begin{equation}\label{dc6}
b_i(x, u)\;=\;\ell_i + \mu_i x_i^- - \sum_{j: j\neq i}\mu_{ij}u_{ij} x_j^{-} -\gamma_i q_i(x, u).
\end{equation}
A detailed description of equation \eqref{dc4} and related results 
are given in Section~\ref{S-ergodic}.

\subsubsection{The ergodic control problem for controlled diffusion} 
Define $\Tilde{r}\colon \Rd_{+} \times \M \rightarrow \R_{+}$, by
$$\Tilde{r}(x,u)\;\df\;r(q_1^+(x, u), \ldots, q_d^+(x, u))\,.$$
We note that for $u\in\M(x)$ the cost $\Tilde{r}(x, u)$ agrees with $r(x, u)$ given by \eqref{05}.
In analogy with \eqref{cost34} we define the ergodic cost associated 
with the controlled diffusion process $X$ and the running cost function 
$\Tilde{r}(x,u)$ as
\begin{equation*}
J(x,U)\;\df\;\limsup_{T\rightarrow \infty}\;\frac{1}{T}\,
\Exp^{U}_{x}\biggl[\int_{0}^{T} \Tilde{r}(X_{t},U_{t}) \,\D{t}\biggr]\,,
\quad  U \in \Uadm\,.
\end{equation*}
Here $\Uadm$ denotes set of all admissible controls which are defined in Section~\ref{S-ergodic}.
We consider the ergodic control problem 
\begin{equation}\label{dcp2}
\varrho_{*}(x)\;=\;\inf_{U \in \Uadm}\;J(x,U)\,.
\end{equation}
We call $\varrho_{*}(x)$ the optimal
value at the initial state $x$ for the
controlled diffusion process $X$.
It is shown later that $\varrho_{*}(x)$ is independent of $x$.
A detailed treatment
and related results corresponding to the ergodic control problem are given
in Section~\ref{S-ergodic}. 

Next we state the main result of this section, the proof of which can be
found in Section~\ref{S-optimality}.

\begin{theorem}\label{T-optimality}
Let $\Hat{X}^{n}(0)\to x\in\Rd$, as $n\to\infty$.
Also assume that \eqref{HWpara} and \eqref{cost1} hold where the cost function $r$ is convex.
Then
$$\lim_{n\to\infty}\;\widehat{V}^{n}(\Hat{X}^{n}(0))\; = \;\varrho_{*}\,,$$
where $\varrho_{*}(x)$ is given by \eqref{dcp2}.
\end{theorem}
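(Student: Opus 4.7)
The argument splits in the standard way into a lower bound $\liminf_n \hat V^n(\hat X^n(0)) \geq \varrho_*$ and a matching upper bound $\limsup_n \hat V^n(\hat X^n(0)) \leq \varrho_*$; combined with the fact that $\varrho_*$ is independent of $x$ (established in Section~\ref{S-ergodic}), these yield the theorem.

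\textbf{Lower bound.} Fix any sequence $Z^n \in \Uadm^n$ whose costs are uniformly bounded (otherwise there is nothing to prove). The first step is to obtain uniform-in-$n$ estimates on the time-averaged moments of $\hat X^n$. I would combine the modified uniform stability result of Lemma~\ref{lem-stability} with a polynomial Lyapunov function $\Lyap$ of degree exceeding $m$, yielding
\begin{equation*}
\sup_{n \geq 1}\; \limsup_{T \to \infty}\; \frac{1}{T} \int_0^T \Exp \bigl[ \abs{\hat X^n(s)}^{m+\del} \bigr]\, \D s \;<\; \infty
\end{equation*}
for some $\del > 0$. This delivers tightness, on $\Rd \times \M$, of the mean empirical measures $\nu^n_T(A\times B) \df \frac{1}{T} \int_0^T \Prob(\hat X^n(s) \in A,\, \hat U^n(s) \in B)\, \D s$. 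Extract a weak subsequential limit $\nu^*$ as $T \to \infty$ and then $n \to \infty$. Applying It\^o's formula to test functions $f \in \calC^2(\Rd)$ in \eqref{dc2} and passing to the limit (the compensators and quadratic variations of the martingales in \eqref{dc3}, divided by $T$, vanish in probability thanks to \eqref{HWpara}) identifies $\nu^*$ as an ergodic occupation measure of the controlled diffusion \eqref{dc4}; the constraint $\hat U^n(t) \in \M(\hat X^n(t))$ passes to the limit giving $\mathrm{supp}(\nu^*) \subset \{(x,u) : u \in \M(x)\}$. Fatou's lemma and the infimum characterization \eqref{dcp2} give $\liminf_n J(\hat X^n(0), \hat Z^n) \geq \langle \tilde r, \nu^* \rangle \geq \varrho_*$.

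\textbf{Upper bound.} Fix $\eps > 0$ and use Theorem~\ref{T-HJB} and Lemma~\ref{lem-cont} to obtain a continuous selector $v^\eps \colon \Rd \to \M$ with $v^\eps(x) \in \M(x)$ that is optimal up to $\eps$ for the perturbed ergodic problem, and a corresponding invariant measure $\imeas_{v^\eps}$ satisfying $\langle \tilde r, \imeas_{v^\eps} \rangle \leq \varrho_* + \eps$. To implement $v^\eps$ at the pre-limit level, define a scheduling rule $Z^n$ adapted to $\calF^n_t$ by setting, at each decision epoch, $Z^n_{ii}(t) = X^n_i(t) \wedge n$ and, for $i \neq j$, $Z^n_{ij}(t)$ equal to an integer as close as possible to $v^\eps_{ij}(\hat X^n(t))\,(X^n_j(t)-n)^-$, subject to the hard constraints \eqref{rel1}--\eqref{xp3}. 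Since $v^\eps(x) \in \M(x)$, this assignment is feasible and the resulting policy is admissible; the rounding error contributes an $\order(n^{-1/2})$ perturbation to $\hat U^n$. Standard Halfin--Whitt diffusion approximation arguments (FCLT for the martingales $\hat M^n$ of \eqref{dc3} via Donsker, continuous mapping for the drift terms, and a Skorokhod representation) then give $\hat X^n \Rightarrow X^\eps$ in $\calC([0,T] : \Rd)$ for every $T>0$, where $X^\eps$ solves \eqref{dc4} with $U_t = v^\eps(X^\eps_t)$. The uniform moment bounds from $\Lyap$ upgrade this weak convergence to convergence of the expected time-averaged cost, so
\begin{equation*}
\limsup_n\; \hat V^n(\hat X^n(0)) \;\leq\; J(x, v^\eps) \;\leq\; \varrho_* + \eps .
\end{equation*}
Sending $\eps \downarrow 0$ finishes the proof.

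\textbf{Main obstacle.} The state-dependent control set $\M(x)$ is the principal technical difficulty. For the lower bound it complicates the uniform stability estimate: the negative outward drift that produces tightness is not manifest pointwise in $u$, and one needs to exploit the convex structure of $q(x,u)$ in \eqref{0000} across the slice $\M(x)$, together with the convexity of $r$, to produce a Lyapunov decay that is uniform over all admissible (not necessarily work-conserving) controls. For the upper bound, the rounding of the continuous selector $v^\eps$ must simultaneously respect integrality and the constraint $v^\eps(\hat X^n) \in \M(\hat X^n)$; the interpretation of $\hat U^n_{ij}$ as the fraction of the idle pool $(\hat X^n_j)^-$ devoted to class $i$ in \eqref{02} is what makes this feasible with an error that vanishes in the diffusion limit.
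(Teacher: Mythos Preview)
Your decomposition into a lower and an upper bound, and your use of the continuous selector $v^\eps$ coming from the perturbed HJB \eqref{eps-HJB} to manufacture pre-limit policies by rounding, are exactly what the paper does (Theorems~\ref{T-lower} and~\ref{T-upper}). One small correction on the lower bound: the uniform moment estimate you need is for the \emph{pre-limit} process, not the diffusion; the paper proves this separately as Lemma~\ref{lem-queue-stability} by applying It\^o's formula for jump semimartingales to a polynomial Lyapunov function and controlling the jump remainder via the optional quadratic variation. Lemma~\ref{lem-stability} is only the diffusion analogue.

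The upper bound is where your route and the paper's diverge, and your version has a gap. You propose to establish process-level weak convergence $\hat X^n \Rightarrow X^\eps$ on $\calC([0,T]:\Rd)$ for each fixed $T$ (via FCLT for the martingales and continuous mapping), and then ``upgrade'' to convergence of the ergodic averages using the moment bounds. But finite-horizon weak convergence together with uniform integrability does not by itself justify the interchange of $n\to\infty$ and $T\to\infty$: from $\lim_n \phi_n(T)=\phi(T)$ for each $T$ and $\lim_T \phi(T)=\varrho_\eps$ you cannot conclude $\limsup_n \limsup_T \phi_n(T)\le \varrho_\eps$. The paper sidesteps this entirely by recycling the empirical-measure machinery from the lower bound: it takes $T\to\infty$ \emph{first} to obtain sub-sequential limits $\tilde\mu^n$ of the occupation measures under the rounded policy, shows (by applying It\^o's formula to $f\in\calC^2_c(\Rd)$ and passing to the limit in $n$, exactly as in \eqref{73}) that any further limit $\tilde\mu$ satisfies the infinitesimal characterization \eqref{76}, and concludes $\tilde\mu=\mu_{u_\eps}$ by uniqueness of the invariant measure. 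No process-level convergence is invoked. Your FCLT argument would be natural for a discounted or finite-horizon cost, but for the ergodic cost you would end up needing the occupation-measure identification anyway; the paper's approach is the direct one.
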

Theorem~\ref{T-optimality} is similar to Theorems~2.1 and 2.2 in \cite{ari-bis-pang}. The central idea of the proof of 
Theorem~\ref{T-optimality} is same as that of \cite{ari-bis-pang}. One of the main advantage of the present setting is
the stability of the system. We could directly show that for all $n$ large the mean-empirical measures corresponding
to $n$-th system has all polynomial moments finite under every admissible policy 
(see Lemma~\ref{lem-queue-stability}). This is not the case in 
\cite{ari-bis-pang} where a spatial truncation method is used to treat such difficulty.
We must also note that the action space in our setting depends on the location $x$ whereas in \cite{ari-bis-pang} 
the action space is a fixed compact set. Therefore we need to adopt suitable modification for this problem.
 As shown below the convexity property of the cost and the structure of drift $b$ play a key role in our analysis.

\section{An Ergodic Control Problem for Controlled Diffusions}
\label{S-ergodic}

\subsection{The controlled diffusion model}

The dynamics are modeled by 
controlled diffusion processes $X = \{X(t),\;t\ge0\}$
taking values in  $\Rd$, and
governed by the It\^o stochastic differential equation
\begin{equation}\label{E-sde}
\D{X}(t) \;=\;b(X(t),U(t))\,\D{t} + \Sigma \,\D{W}(t)\,,
\end{equation}
where $b$ is given by \eqref{dc6} and 
\begin{align*}
\Sigma\Sigma^T \; =\; \text{diag}\,(2\lam_1, \ldots, 2\lam_d).
\end{align*}
All random processes in \eqref{E-sde} live in a complete
probability space $(\Omega,\sF,\{\sF_t\},\Prob)$.
The process $W$ is a $d$-dimensional standard Brownian motion independent
of the initial condition $X_{0}$.

\begin{definition}\label{admissible}
A process $U$, taking values in $\M$ and
$U(t)(\omega)$ is jointly measurable in
$(t,\omega)\in[0,\infty)\times\Omega$, is said to be an admissible control if,
there exists a strong solution $X=\{X(t)\, :\, t\geq 0\}$ satisfying \eqref{E-sde}, and, 
\begin{itemize}
\item $U$ is \emph{non-anticipative}:
for $s < t$, $W_{t} - W_{s}$ is independent of
\begin{equation*}
\sF_{s} \;\df\;\text{the completion of~} \sigma\{X_{0},U(p),W(p),\;p\le s\}
\text{~relative to~}(\sF,\Prob)\,.
\end{equation*}
\item $U(t)\in\M(X(t))$, almost surely, for  $t\geq 0$.
\end{itemize}
\end{definition}
We let $\Uadm$ denote the set of all admissible controls.
Note that the drift $b$ is Lipschitz continuous
and the diffusion matrix $\Sigma$ non-degenerate. 
Since $0\in\M(x)$ for all $x\in\Rd$ we see that $U\equiv 0$ is in $\Uadm$. Thus $\Uadm$ is non-empty.
Let $a\;\df\; \Sigma\Sigma^T$. We define the family of operators
$L^{u}\colon\calC^{2}(\Rd)\to\calC(\Rd)$,
with parameter $u\in\M$, by
\begin{equation}\label{E3.3}
L^{u} f(x) \;\df\;\frac{1}{2}\sum_{i,j=1}^d a_{ij}\,\partial_{ij} f(x)
+ \sum_{i=1}^d b_{i}(x,u)\, \partial_{i} f(x)\,,\quad u\in\M\,.
\end{equation}
We refer to $L^{u}$ as the \emph{controlled extended generator} of
the diffusion \eqref{E-sde}.
In \eqref{E3.3} and elsewhere in this paper we have adopted
the notation $\partial_{i}\df\tfrac{\partial~}{\partial{x}_{i}}$ and
$\partial_{ij}\df\tfrac{\partial^{2}~}{\partial{x}_{i}\partial{x}_{j}}$.

A control $U\in\Uadm$ is said to be stationary Markov if for some measurable $v:\Rd\to\M$ we have
$U(t)=v(X(t))$. Therefore for a stationary Markov control we have $v(X(t))\in\M(X(t))$ for all $t\geq 0$.
By $\Usm$ we denote the set of all stationary Markov controls. 

Now we introduce relaxed controls which will
be useful for our analysis. Association of  relaxed controls  to such control problems is useful since it extends
the action space to a compact, convex set \cite{ari-bor-ghosh}.
In our setup we show that we can not do better even we extend the controls to include relaxed controls
(see Theorem~\ref{T-HJB} and \ref{T-exist}).
Moreover, relaxed control would be useful to prove asymptotic lower bounds (Theorem~\ref{T-lower}).
By $\mathcal{P}(\M)$ we denote the set of all probability measures on $\M$. We can extend 
the drift $b$ and the running cost $r$ on $\mathcal{P}(\M)$ as follows: for $v(\D{u})\in\mathcal{P}(\M)$,
\begin{equation*}
b(x,v)\;\df\;\int_\M b(x,u)v(\D{u})\,,\quad \text{and}\quad
r(x,v)\;\df\;\int_\M r(x,u)v(\D{u})\,.
\end{equation*}
Controls taking values in $\mathcal{P}(\M)$ are known as \textit{relaxed} controls. 
Similarly, we can extend the definition of stationary Markov controls to measure valued processes.
A stationary Markov control $v:\Rd\to\mathcal{P}(\M)$ is said to be admissible if there  is a unique strong solution
to \eqref{E-sde} and $\langle 1_{M(X(t))}, v(X(t))\rangle =1$, almost surely, for all $t\geq 0$.
We continue to denote this extended class by $\Usm$. We endow the space $\Usm$ with
 the following topology \cite[Chapter~2.2.4]{ari-bor-ghosh}:
: $v_{n}\to v$ in $\Usm$ if and only if
$$\int_{\Rd}f(x)\int_\M g(x,u)v_{n}(\D{u}\mid x)\,\D{x}
\;\xrightarrow[n\to\infty]{}\;
\int_{\Rd}f(x)\int_\M g(x,u)v(\D{u}\mid x)\,\D{x}$$
for all $f\in L^{1}(\Rd)\cap L^{2}(\Rd)$
and $g\in\calC_{b}(\Rd\times \M)$.
\begin{proposition}\label{compact}
The space $\Usm$ under the above mentioned topology is compact whenever the initial condition $X(0)$ is a fixed point.
\end{proposition}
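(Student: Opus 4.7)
The plan is to realise $\Usm$ as a closed subset of a compact ambient space. Let $\Usm^{*}$ denote the larger collection of all measurable transition kernels $v\colon\Rd\to\cP(\M)$, with \emph{no} state-dependent support constraint, equipped with the same topology as in the statement. I would then show \emph{(i)} that $\Usm^{*}$ is compact, and \emph{(ii)} that $\Usm=\{v\in\Usm^{*}:v(x)\in\cP(\M(x))\ \text{for a.e.\ }x\}$ is closed in $\Usm^{*}$. Compactness of $\Usm$ then follows at once.

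For \emph{(i)}, since $\M$ is a compact metric space and $\Rd$ is separable, one can fix countable dense families $\{f_{k}\}\subset L^{1}(\Rd)\cap L^{2}(\Rd)$ and $\{g_{l}\}\subset\calC_{b}(\Rd\times\M)$. For any sequence $\{v_{n}\}\subset\Usm^{*}$ the scalars $I_{n}^{k,l}\df\int f_{k}(x)\int g_{l}(x,u)\,v_{n}(\D u\mid x)\,\D x$ are bounded by $\|f_{k}\|_{1}\|g_{l}\|_{\infty}$, so Cantor diagonalisation produces a subsequence along which every $I_{n}^{k,l}$ converges. A standard relaxed-control / Young-measure argument (see, e.g., Borkar's monograph on ergodic control) identifies the limit with a positive kernel $v$; taking $g\equiv 1$ forces $v(x)(\M)=1$ for a.e.\ $x$, so $v\in\Usm^{*}$ and $v_{n}\to v$ in the stated topology.

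For \emph{(ii)}, introduce the truncated residuals
\[
\psi_{i,R}(x,u)\;\df\;\Bigl(\sum_{j:\,j\ne i}u_{ij}\,x_{j}^{-}-x_{i}^{+}\Bigr)^{\!+}\wedge R,\qquad i=1,\dots,d,\ R>0,
\]
each of which is non-negative and lies in $\calC_{b}(\Rd\times\M)$. By monotone convergence as $R\uparrow\infty$, the admissibility condition $v(x)\in\cP(\M(x))$ for a.e.\ $x$ is equivalent to $\int\psi_{i,R}(x,u)\,v(\D u\mid x)=0$ for a.e.\ $x$ and every $i,R$. If $v_{n}\in\Usm$ and $v_{n}\to v$ in $\Usm^{*}$, then for every non-negative $f\in L^{1}(\Rd)\cap L^{2}(\Rd)$,
\[
\int f(x)\int \psi_{i,R}(x,u)\,v(\D u\mid x)\,\D x
=\lim_{n}\int f(x)\int \psi_{i,R}(x,u)\,v_{n}(\D u\mid x)\,\D x=0,
\]
and since such $f$ are dense in the non-negative cone of $L^{1}(\Rd)$, this forces $\int\psi_{i,R}(x,u)\,v(\D u\mid x)=0$ a.e., so $v\in\Usm$.

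The main, if modest, obstacle is that the set-valued map $x\mapsto\M(x)$ is only upper, not lower, semicontinuous at points where some coordinates of $x$ vanish (easily seen already in dimension two). This prevents a direct argument using a single continuous test function such as $\dist(u,\M(x))$. Encoding the admissibility condition by the family of bounded continuous truncations $\psi_{i,R}$ and invoking monotone convergence in $R$ is precisely what circumvents this difficulty and reduces the problem to passing to the limit inside the admissible test-function class.
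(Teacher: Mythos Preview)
Your argument is correct and is genuinely different from the paper's. The paper also starts by extracting a sub-sequential limit $v$ of $\{v_n\}$ in the larger space of unconstrained kernels (citing \cite[Section~2.4]{ari-bor-ghosh}), but then verifies admissibility of $v$ by a dynamical/PDE route: it uses the fixed initial point $X(0)=x$, appeals to $L^{1}$-convergence of the transition densities $p_n(t,x,\cdot)\to p(t,x,\cdot)$, approximates the lower-semicontinuous indicator $\Ind_{\M^{c}(x)}(u)$ from below by continuous $g_k$, and concludes $\Exp\bigl[v(\M^{c}(X(t))\mid X(t))\bigr]=0$ for each $t>0$; finally it redefines $v$ at the single point $x$ to cover $t=0$.

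Your approach bypasses the diffusion entirely: you encode the constraint $u\in\M(x)$ by the explicit family $\psi_{i,R}\in\calC_b(\Rd\times\M)$ and pass to the limit directly in the defining topology. This is more elementary and makes clear that the hypothesis on $X(0)$ is nearly superfluous---it is only needed (as in the paper) to fix a representative of $v$ at the single starting point so that the process-level condition $v(X(t))\in\cP(\M(X(t)))$ holds at $t=0$ as well. One small point worth making explicit: the paper's definition of $\Usm$ is stated through the process $X(t)$, not through ``a.e.\ $x$''; the two are equivalent for $t>0$ because the transition density is strictly positive, which is what justifies identifying $\Usm$ with $\{v\in\Usm^{*}:v(x)\in\cP(\M(x))\ \text{a.e.}\}$ up to modification on a null set.
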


\begin{proof}
Let $X(0)=x$. Let $\{v_n\}\in\Usm$ be a sequence of stationary Markov controls. Then $(X_n(t), v_n(X_n(t)))$ satisfies \eqref{E-sde} and
\begin{equation}\label{10}
\langle 1_{M(X_n(t))}, v_n(X_n(t))\rangle =1, \quad a.s., \quad \text{for}\;\; t\geq 0.
\end{equation}
Now from \cite[Section~2.4]{ari-bor-ghosh} there exists a measurable $v:\Rd\to\mathcal{P}(\M)$ such that $v_n\to v$ as
$n\to\infty$ along some subsequence in the topology of Markov control. We continue to denote the subsequence by $v_n$. Now consider the strong solution
$X(t)$ corresponding to the Markov control $v$. Existence of the unique strong solution is assured by \cite[Theorem~2.2.12]{ari-bor-ghosh}.
Moreover, $X_n\Rightarrow X$ as $n\to\infty,$ in $\calC([0, \infty), \Rd)$. A similar the argument as in \cite[Lemma~2.4.2]{ari-bor-ghosh}
shows that for any $t>0$, 
\begin{equation}\label{11}
\norm{p_n(t, x, \cdot)-p(t, x, \cdot)}_{L^1(\Rd)}\to 0, \quad as\quad n\to\infty, 
\end{equation}
where $p_n, p$ denote the transition density of $X_n,\, X$ respectively, at time $t$ starting from $x$.
Also the transition densities are locally bounded.
 Now observe that
$(x, u)\mapsto 1_{\M^c(x)}(u)$ is a lower-semicontinuous function. This fact follows from the definition of $\M(x)$.
Hence there exists a sequence of bounded, continuous function $g_k : \Rd\times\M\to \R$ such that
(see for example, \cite[Proposition~2.1.2]{krantz})
$$g_k(x, u)\nearrow 1_{M^c(x)}(u), \quad \text{pointwise}, \quad \text{as}\; \; k\to\infty.$$
Let $\phi_k$ be any smooth, non-negative function taking values in $[0, 1]$ with support in $\B_k(0)$.
We choose $\phi_k$ to satisfy $\phi_k\nearrow 1$, as $k\to\infty$. Then from the convergence criterion
of Markov controls we get for all $k\geq 1$,
\begin{align*}
&\int_{\Rd}p(t, x, y)\phi_k(y)\int_{\M}g_{k}(y, u)v(\D{u} | y) dy
\\[5pt]
 &\leq \int_{\Rd}p(t, x, y)\phi_k(y)\int_{\M}g_{k}(y, u)\bigl(v(\D{u} | y)\, \D{y}
-v_n(\D{u} | y)\bigr)
\\[5pt]
 &\, \hspace{4mm}\ + \sup_{(y, u)\in\Rd\times\M}\abs{g_k(y, u)}\norm{p_n(t, x, \cdot)-p(t, x, \cdot)}_{L^1(\Rd)}
 \\[5pt]
 &\,\hspace{4mm}\, + \int_{\Rd}p_n(t, x, y)\phi_k(y)\int_{\M}g_{k}(y, u) v_n(\D{u} | y)
\\[5pt]
&\leq \int_{\Rd}p(t, x, y)\phi_k(y)\int_{\M}g_{k}(y, u)\bigl(v(\D{u} | y)
-v_n(\D{u} | y)\bigr)
\\[5pt]
 &\, \hspace{4mm}\ + \sup_{(y, u)\in\Rd\times\M}\abs{g_k(y, u)}\norm{p_n(t, x, \cdot)-p(t, x, \cdot)}_{L^1(\Rd)}
 \\[5pt]
 &\,\hspace{4mm}\, + \int_{\Rd}p_n(t, x, y)\int_{\M} 1_{\M^c(y)}(u)v_n(\D{u} | y)
 \\[5pt]
 &\longrightarrow 0, \quad \text{as}\quad n\to\infty,
\end{align*}
where in the last line we use \eqref{10}, \eqref{11}. Therefore letting $k\to\infty$ we have for $t>0$ that
$$\Exp\Bigl[\int_{\M}1_{\M^c(X(t))}v(\D{u} | X(t))\Bigr]=0.$$
This proves that for every $t>0$, support of $v(\D{u}| X(t))$ lies in $M(X(t))$ almost surely. Now define $v(x)=0$ 
to make sure $v(X(0))\in M(X(0))$. This completes
the proof.
\end{proof}
It is well known \cite[Theorem~2.2.12]{ari-bor-ghosh} that for every Markov control in $\Usm$ there is a unique strong solution to \eqref{E-sde} which is also a strong
Markov process. A stationary control is called stable if the associated Markov process $X$ is positive recurrent. 

We recall the cost function $\tilde{r}$ from previous section where
$$\Tilde{r}(x, u)= r(q^+_1(x, u), \ldots, q^+_d(x, u))$$
and $r$ is a convex function that satisfies \eqref{cost1}. Define
$$\bvnorm{u}\;\df\; \sum_{i, j=1}^d \abs{u_{ij}}^2, \quad \text{where}\quad u\in\M.$$
For $\eps\in(0, 1),$ we consider the following perturbed cost function
$$\tilde{r}_\eps(x, u)\; \df\; \Tilde{r}(x, u) + \eps\bvnorm{u}.$$
Since $\bvnorm{\cdot}$ is strictly convex in $u$, we have $\Tilde{r}_\eps(x, \cdot)$ strictly convex on $\M(x)$ for every $x\in\Rd$. 
For $U\in\Uadm$, we define
\begin{align*}
J(x, U)&\;=\limsup_{T\to\infty}\frac{1}{T}\Exp\Bigl[\int_0^T \Tilde{r}(X(s), U(s))\D{s}\Bigr],
\\
J_\eps(x, U)&\;=\limsup_{T\to\infty}\frac{1}{T}\Exp\Bigl[\int_0^T \Tilde{r}_\eps(X(s), U(s))\D{s}\Bigr].
\end{align*}
Therefore we have two value functions given by
\begin{equation}\label{0002}
\varrho_* \;\df\; \inf_{U\in\Uadm}J(x, U), \quad \varrho_\eps\;\df\; \inf_{U\in\Uadm}J_\eps(x, U).
\end{equation}
We have suppressed the dependency of $x$ from the value function as it is shown below that the value functions do not depend on $x$.
Our main result of this section is the following.

\begin{theorem}\label{T-HJB}
There exist $V_\eps,\, V\in\calC^2(\Rd)$ satisfying the following Hamilton-Jacobi-Bellman (HJB) equations,
\begin{align}
\min_{u\in\M(x)} \Bigl(L^u V_\eps(x) + \Tilde{r}_\eps(x, u)\Bigr) &\;=\; \varrho_\eps, \quad \text{for}\;\; \eps\in(0, 1),\label{eps-HJB}
\\
\min_{u\in\M(x)} \Bigl(L^u V(x) + \Tilde{r}(x, u)\Bigr)&\;=\; \varrho_{*},\label{HJB}
\end{align}
such that
\begin{enumerate}
\item[(i)] for each $\eps\in(0, 1)$, there is a unique continuous selector of \eqref{eps-HJB} which is also optimal for $\varrho_\eps$;
\item[(ii)] if a pair $(\bar{V}_\eps, \bar\varrho_\eps)\in\calC^2(\Rd)\cap\calC_{\mathrm{pol}}(\Rd)\times\R$ 
satisfies \eqref{eps-HJB} then we have $(\bar{V}_\eps, \bar\varrho_\eps)=({V}_\eps, \varrho_\eps)$;
\item[(iii)] as $\eps\to 0$, $V_\eps\to V$ in $\Sobl^{2, p}(\Rd),\, p\geq 1$;
\item[(iv)] every measurable selector of \eqref{HJB} is optimal for $\varrho_{*}$;
\item[(v)] if a pair $(\bar{V}, \bar\varrho)\in\calC^2(\Rd)\cap\calC_{\mathrm{pol}}(\Rd)\times\R$ 
satisfies \eqref{HJB} then we have $(\bar{V}, \bar\varrho)=({V}, \varrho_{*})$.
\end{enumerate}
\end{theorem}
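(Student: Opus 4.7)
The plan is to construct $V_\eps$ first by a vanishing-discount procedure applied to the $\eps$-perturbed ergodic problem, and then to recover $V$ as the limit as $\eps\downarrow 0$. Treating the perturbed problem first is the right move because the extra $\eps\bvnorm{u}$ makes the cost strictly convex in $u$ on the convex slice $\M(x)$; this forces uniqueness and continuity of the minimizing selector, which is what one needs to build $\eps$-optimal policies later and thus what underlies (i).

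For fixed $\eps\in(0,1)$ and discount $\alpha>0$, I would first solve the $\alpha$-discounted HJB $\alpha V^\eps_\alpha = \min_{u\in\M(x)}\{L^u V^\eps_\alpha + \Tilde{r}_\eps(\cdot,u)\}$ by standard elliptic theory in $\Sobl^{2,p}$, measurability of the minimizer being supplied by Filippov's implicit function theorem (valid since Filippov's criterion holds for the compact convex-valued correspondence $x\mapsto\M(x)$). The modified stability of Lemma~\ref{lem-stability} together with the local H\"older control of Lemmas~\ref{lem-holder} and \ref{lem-holder-0} give $(\alpha,\eps)$-uniform bounds $V^\eps_\alpha - V^\eps_\alpha(0)\in\order(\Lyap)$ and $\calC^{2,\beta}_{\mathrm{loc}}$ estimates. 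Sending $\alpha\to 0$ along a subsequence produces $V_\eps\in\calC^2(\Rd)\cap\calC_{\mathrm{pol}}(\Rd)$ and $\varrho_\eps$ satisfying \eqref{eps-HJB}. Strict convexity of $u\mapsto p\cdot b(x,u)+\Tilde{r}_\eps(x,u)$ on $\M(x)$, inherited from $\bvnorm{\cdot}$ and the affine dependence of $b$ on $u$, singles out a unique minimizer $v_\eps$, continuous by Lemma~\ref{lem-cont}. Optimality of $v_\eps$ and the identification of $\varrho_\eps$ with the value in \eqref{0002} follow from an It\^o/Dynkin expansion of $V_\eps(X(t))$ under an arbitrary $U\in\Uadm$, the martingale terms being controlled by the polynomial moment bounds of Lemma~\ref{lem-stability}; the same verification applied to any competing $(\bar V_\eps,\bar\varrho_\eps)\in\calC^2\cap\calC_{\mathrm{pol}}\times\R$ solving \eqref{eps-HJB} gives (ii) after a constant normalisation at the origin.

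For (iii)--(v), since $\eps\bvnorm{u}$ is uniformly bounded on $\M$, the estimates on $V_\eps$ are uniform in $\eps$, so elliptic regularity yields a subsequential limit $V$ in $\Sobl^{2,p}(\Rd)$; lower semicontinuity of the Hamiltonian in $u$ together with the $L^p$ convergence of the Hessians allows one to pass to the limit in \eqref{eps-HJB} and obtain \eqref{HJB}. Uniqueness (v) is proved by the same It\^o verification of Step~2 applied to \eqref{HJB}, and uniqueness upgrades subsequential to full convergence for (iii). Item (iv) follows by It\^o's formula applied to $V$ and any measurable selector of \eqref{HJB}, whose existence is again given by Filippov. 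The main obstacle throughout is the state-dependence of $\M(x)$: continuity of $v_\eps$ reduces to lower hemicontinuity of $x\mapsto\M(x)$, which is delicate on the hyperplanes $\{x_i=0\}$ where the active constraints in \eqref{con-set} change structure; the convexity of $r$ and the explicit affine form of the constraints are what enable the Berge-type argument of Lemma~\ref{lem-cont} to close, and similarly what lets the verification theorem absorb the non-uniqueness of minimizers in the unperturbed Hamiltonian.
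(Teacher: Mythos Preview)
Your proposal is correct and follows essentially the same architecture as the paper: vanishing discount for the $\eps$-perturbed problem (Theorem~\ref{T-discount}, Lemmas~\ref{L4.5}--\ref{L4.6}) using the stability and H\"older lemmas you cite, then a second limit $\eps\downarrow 0$ with Filippov supplying measurable selectors for \eqref{HJB}. The only place where the paper is slightly more explicit than your sketch is in the uniqueness arguments (ii) and (v): rather than a bare It\^o verification, the paper relies on the hitting-time stochastic representation of Lemma~\ref{L4.5}(a) (passed to the limit) together with the argument of \cite[Theorem~3.7.12(iii)]{ari-bor-ghosh}, and the inequality $\varrho\le\varrho_\eps$ is obtained via the Abelian Lemma~\ref{L3.4} rather than directly from Dynkin's formula.
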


It is not hard to see that $\varrho_\eps\searrow\varrho_{*}$, as $\eps\to 0$ (the difference between the running costs
is the order of $\eps$).
Therefore using Theorem~\ref{T-HJB} we can find $\eps$-optimal
controls (in fact, continuous Markov controls) for $\varrho_{*}$. Continuity property of the Markov controls plays a key role in 
the construction of  $\eps$-optimal admissible controls for the queueing models and in obtaining asymptotic upper bound for Theorem~\ref{T-optimality}.
Results similar to Theorem~\ref{T-HJB} are also obtained in \cite{ari-bis-pang} for a fixed action space  that does not depend on $x$. 
Therefore the results of \cite{ari-bis-pang} do not directly apply  here.
Because of the state dependency of the action space we need to put extra effort to establish regularity properties of the value function.
Also, finding a measurable minimizing selector of \eqref{HJB} becomes less obvious.
On the other hand, we have
uniform stability (see Lemma~\ref{lem-stability}), an advantage compared to \cite{ari-bis-pang}, which help us in proving Thoerem~\ref{T-HJB}.
It is shown that the $\eps$-perturbed Hamiltonian
in \eqref{eps-HJB} has certain regularity properties (Lemma~\ref{lem-holder}) which together with the strict convexity of the
perturbed cost $\Tilde{r}_\eps$ help us in finding a unique continuous minimizing selector (Lemma~\ref{lem-cont}).
Using these properties we characterize  the discounted value function $V^\al_\eps$ with the
running cost $\Tilde{r}_\eps$ in Theorem~\ref{T-discount}. Then using uniform stability and the Sobolev estimates
we show that the scaled value function $\bar{V}^\al_\eps\;\df\; V^\al_\eps-V^\al_\eps(0)$ converges to 
a limit $V_\eps$, as $\alpha\to 0$, that solves \eqref{eps-HJB}. A similar argument is also used to justify the passage of limit in $V_\eps\to V$ as $\eps\to 0$.
The detailed proof of  Theorem~\ref{T-HJB} is given in section~\ref{S-spatial}.

\section{Uniform stability and related results}\label{S-spatial}
The goal of this section is to prove Theorem~\ref{T-HJB} and obtain related estimates. 
In what follows, we use several standard results from the theory of elliptic PDE's without explicitly mentioning its reference. 
Those results can be found in \cite{gil-tru}, \cite[Appendix]{ari-bor-ghosh}. For instance, the
following is used in several places:
if a function $\psi\in\Sobl^{2, p}(\B), p>d, \B$ is an open set, satisfies 
$\frac{1}{2}\sum_{ij}a_{ij} \partial_{ij}\psi(x) = f(x)$ for some
function $f\in \calC^\al(\B), \al\in(0, 1)$, then $\psi\in\calC_{\mathrm{loc}}^{2, \beta}(\B)$ for some $\beta>0$.

We start by proving continuity property of
the selector. Define a map $\varphi: \Rd\times\Rd\to \M$ as follows.
$$\varphi(x, p)\; \df\; \Argmin_{u\in\M(x)} \{b(x, u)\cdot p + \Tilde{r}_\eps(x, u)\}.$$
Since $\M(x)$ is convex and compact for every $x$ and $\Tilde{r}_\eps(x, \cdot)$ is strictly 
convex on $\M(x)$ we see that $\varphi$ is well defined.

\begin{lemma}\label{lem-cont}
For every $\eps\in(0, 1)$, the function $\varphi$ defined above is continuous.
\end{lemma}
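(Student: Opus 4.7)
The plan is to work with the objective $F(x, u) := b(x, u) \cdot p + \Tilde{r}_\eps(x, u)$, which is strictly convex in $u$ on the nonempty convex compact set $\M(x)$ because $b(x, \cdot)$ is affine by \eqref{dc6} and $\eps\bvnorm{u}$ is strictly convex; hence $\varphi(x, p)$ is a singleton and the task is to show the selector is continuous. Given $(x_n, p_n) \to (x, p)$, I would set $u_n := \varphi(x_n, p_n)$ and, using compactness of $\M$, extract a convergent subsequence $u_n \to u^*$. The correspondence $y \mapsto \M(y)$ has closed graph since the constraints in \eqref{con-set} are continuous in $(u, y)$, so $u^* \in \M(x)$; the remaining task is to identify $u^*$ with $\varphi(x, p)$.

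A key structural property of $\bar u := \varphi(x, p)$ that I would establish first is: $\bar u_{ij} = 0$ whenever $j \neq i$ and $x_j^- = 0$. Indeed, $b(x, u)$ and $\Tilde{r}(x, u)$ depend on $u_{ij}$ (for $j \neq i$) only through the product $u_{ij} x_j^-$ by \eqref{dc6} and \eqref{0000}, so when $x_j^- = 0$ the $u_{ij}$-contribution to $F$ is exactly the penalty $\eps u_{ij}^2$, which is strictly minimized at $u_{ij} = 0$; since setting $u_{ij} = 0$ only relaxes the constraints defining $\M(x)$, the minimizer must satisfy $\bar u_{ij} = 0$ in these coordinates.

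To compare $F(x, u^*)$ with $F(x, \bar u)$, I would approximate $\bar u$ from within $\M(x_n)$ by the scaled control $u_n' := \alpha_n \bar u$, where
\[
\alpha_n \;:=\; \min\Bigl\{1,\; \min_{i\,:\,A_{n,i}>0} \tfrac{B_{n,i}}{A_{n,i}}\Bigr\},\qquad A_{n,i} := \sum_{j\neq i} \bar u_{ij}(x_n)_j^-,\quad B_{n,i} := (x_n)_i^+.
\]
By construction $u_n' \in \M(x_n)$. As $n \to \infty$, $A_{n,i} \to A_i := \sum_{j\neq i}\bar u_{ij} x_j^-$, $B_{n,i} \to B_i := x_i^+$, and $A_i \leq B_i$ since $\bar u \in \M(x)$. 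The structural property lets me argue that if $A_i = 0$ then $\bar u_{ij} x_j^- = 0$ for every $j \neq i$, whence $\bar u_{ij} = 0$ for every $j \neq i$ with $x_j^- > 0$ and, combined with the structural zeros, $\bar u_{ij} = 0$ for all $j \neq i$; then $A_{n,i} \equiv 0$ and the index $i$ never enters the minimum. The remaining ratios tend to $B_i/A_i \geq 1$, giving $\alpha_n \to 1$ and $u_n' \to \bar u$. Minimality of $u_n$ and continuity of $F$ then yield $F(x, u^*) = \lim_n F(x_n, u_n) \leq \lim_n F(x_n, u_n') = F(x, \bar u)$, and uniqueness of the strictly convex minimizer gives $u^* = \bar u$; since every subsequence of $\{\varphi(x_n, p_n)\}$ admits a further subsequence with this limit, the whole sequence converges to $\varphi(x, p)$.

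The hard part is that $\M(\cdot)$ fails to be lower semicontinuous in general: approaching a point $x$ from inside the negative orthant can collapse $\M(x_n)$ to $\{0\}$, so the Berge maximum theorem does not apply off-the-shelf. The structural zero pattern carried by $\bar u$ is exactly what circumvents this: it guarantees that only the ``active'' indices enter the feasibility scaling and that the corresponding limiting ratios are at least $1$, so the scaled approximants $\alpha_n \bar u$ are feasible with $\alpha_n \to 1$.
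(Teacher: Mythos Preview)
Your proof is correct and follows essentially the same strategy as the paper: both establish the structural zero pattern (the minimizer satisfies $\bar u_{ij}=0$ whenever $x_j^-=0$), use it to build feasible approximants in $\M(x_n)$ converging to $\bar u$, and conclude by uniqueness of the strictly convex minimizer. The only difference is in the construction of the approximants---the paper subtracts $\delta^n_{ki}=2|x_n-x|/(x_n)_i^-$ from each positive entry of $\bar u$, whereas you scale $\bar u$ uniformly by $\alpha_n$---and your scaling construction is, if anything, a bit cleaner.
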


\begin{proof}
Consider a point $(x, p)\in\Rd\times\Rd$. Let $\varphi(x, p)=u\in\M(x)$. We claim that if $x_i^-=0$ for some $i\in\{1, \ldots, d\}$, then
\begin{equation}\label{12}
u_{ki}=0, \quad \text{for all} \quad k\neq i.
\end{equation}
Suppose \eqref{12} is not true and $u_{ki}>0$ for some $k\neq i$. We define $\bar{u}\in\M$ as follows,
\[\bar{u}_{lm}=\left\{\begin{array}{lll}
u_{j_1j_2} & \text{if}\; \; j_1\neq k, \; \text{or}\; j_2\neq i,
\\[5pt]
0 & \text{if}\;\; j_1=k,\; \text{and}\;\; j_2=i.
\end{array}
\right.
\]
It is easy to check that $\bar{u}\in\M(x)$. We also have
$$b(x, u)\cdot p + \Tilde{r}_\eps(x, u) > b(x, \bar{u})\cdot p + \Tilde{r}_\eps(x, \bar{u}).$$
which contradicts the fact that $u\in\Argmin_{u\in\M(x)} \{b(x, u)\cdot p + \Tilde{r}_\eps(x, u)\}$ and this proves \eqref{12}.
Let $(x_n, p_n)\to (x, p)$, as $n\to\infty$. We show that $\varphi(x_n, p_n)\to \varphi(x, p)$, as $n\to \infty$. Let
\begin{align*}
u_n &\;\df\; \varphi(x_n, p_n)\in\, \Argmin_{u\in\M(x_n)}\; \bigl\{b(x_n, u)\cdot p_n + \Tilde{r}_\eps(x_n, u)\bigr\},
\\
u &\;\df\; \varphi(x, p)\in\, \Argmin_{u\in\M(x)}\; \bigl\{b(x, u)\cdot p + \Tilde{r}_\eps(x, u)\bigr\}.
\end{align*}
Since the sequence $\{u_n\}$ is bounded we can assume that $u_n\to m_0\in\M$. It is also easy to see that $m_0\in\M(x)$. We need to show that
$m_0=u$.
Given $u$ as above we find $m_n\in\M(x_n)$ such that $m_n\to u$, as $n\to\infty$. Construction of $m_n$ is done in following two cases.

\noindent{Case 1:} Let  $u=0$. Therefore we let
$m_n=0\in\M(x_n)$ for all $n$.

\noindent{Case 2:} Let $u\neq 0$ . Then using \eqref{12} we see that whenever $u_{ki}>0$ for
some $k$ and $i\neq k$ we have $x_i^->0$. Let $\I(x)=\{i\; :\; u_{ki}>0\; \text{for some}\; k\neq i\}$.
Therefore $(x_n)_{i}^- >0$ for all $i\in\I(x)$ and large $n$. Define for large $n$,
$$\del^n_{ki}\:\df\: \frac{2\abs{x_n-x}}{(x_n)_{i}^-}, \quad \text{whenever}\; u_{ki}>0.$$
We set $\del^{n}_{ki}=0$ otherwise. Defne $(m_n)_{ki}=u_{ki}-\del^n_{ki}$ for all $k\neq i$.
Since $\del^n\to 0$ as $n\to\infty$, we have $m_n\in\M$ for all large $n$. Now we show that $m_n\in\M(x_n)$ for all large $n$.
To do this we note that for any $i\in\{1, \ldots, d\}$,
\begin{align*}
\sum_{j:j\neq i}(m_n)_{ij}(x_n)^-_j  = \sum_{j: u_{ij}>0}(u_{ij}-\del^n_{ij}).(x_n)^-_j
= \sum_{j: u_{ij}>0}u_{ij}.(x_n)^-_j- \abs{x_n-x}\sum_{j: u_{ij}>0} 2.
\end{align*}
If the set $\{j: u_{ij}>0\}$ is empty then the rhs of the above display is less than $(x_n)^+_i$. Otherwise we get
\begin{align*}
\sum_{j:j\neq i}(m_n)_{ij}(x_n)^-_j  &\leq \sum_{j: u_{ij}>0} u_{ij}x^-_j + \abs{x-x_n}\sum_{j: u_{ij}>0}(u_{ij}-2)
\\
&\leq x^+_i - \abs{x-x_n}\sum_{j: u_{ij}>0}1
\\
&\leq (x_n)^+_i + \abs{x-x_n}\Bigl(1-\sum_{j: u_{ij}>0}1\Bigr)
\\
&\leq (x_n)^+_i,
\end{align*}
where in the second inequality we use the fact that $u\in\M(x)$ and $u_{ij}\leq 1$. This proves that $m_n\in\M(x_n)$ for all large $n$. 
Hence using the
definition we have
$$b(x_n, u_n)\cdot p_n + \tilde{r}_\eps(x_n, u_n)\; \leq \; b(x_n, m_n)\cdot p_n + \tilde{r}_\eps(x_n, m_n),$$
and letting $n\to\infty$, we get
$$b(x_n, m_0)\cdot p + \tilde{r}_\eps(x, m_0)\; \leq \; b(x, u)\cdot p + \tilde{r}_\eps(x, u).$$
Therefore $m_0\in\M(x)$ is also a minimizer in $\M(x)$. By uniqueness property of the minimizer 
in $\M(x)$ we get $m_0=u$. Hence the proof.
\end{proof}

Let $\Lyap\in\calC^2(\Rd)$ be such that $\Lyap(x) =  \abs{x}^{k}$ for $\abs{x}\geq 1$
where $k\geq 1$. In fact, we can take $\Lyap$ to be non-negative. Also define
$$ h(x) \; \df\; |x|^k, \quad \text{for}\quad x\in\Rd.$$
Following lemma establishes uniform stability of the our system.

\begin{lemma}\label{lem-stability}
There exists a positive constants $c_5, c_6$, depending on $k$, such that
\begin{equation}\label{16}
\begin{split}
\sup_{u\in\M(x)}\, L^u \Lyap(x) &\leq c_5 - c_6\, h(x), \quad \text{for all}\quad x\in\Rd.
\end{split}
\end{equation}
\end{lemma}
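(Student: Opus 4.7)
The plan is to reduce matters to the scalar inequality
\begin{equation*}
\sum_i x_i\, b_i(x,u)\;\le\; C_1|x|-c_0|x|^2
\end{equation*}
holding uniformly in $u\in\M(x)$, and then to combine it with the elementary estimates $\partial_i\Lyap(x)=k|x|^{k-2}x_i$ and $\abs{\partial_{ij}\Lyap(x)}\le C|x|^{k-2}$ (valid on $\{|x|\ge 1\}$) to obtain
\begin{equation*}
L^u\Lyap(x)\;\le\; Ck|x|^{k-1}+C|x|^{k-2}-c_0 k|x|^k.
\end{equation*}
This yields \eqref{16} on $\{|x|\ge 1\}$ by absorbing the lower-order terms into $c_5$, and on $\{|x|<1\}$ the bound is automatic since $L^u\Lyap$ is continuous and $\M$ is compact, so enlarging $c_5$ covers the unit ball.

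To obtain the scalar bound I expand $\sum_i x_i b_i(x,u)$ from \eqref{dc6}, using $x_i x_i^{\pm}=\pm(x_i^{\pm})^2$, and get
\begin{equation*}
\sum_i x_i b_i(x,u)\;=\;\sum_i x_i\ell_i-\sum_i\mu_i(x_i^-)^2-\sum_i\gamma_i(x_i^+)^2+\sum_{i\ne j}(\gamma_i-\mu_{ij})u_{ij}\,x_i\,x_j^-.
\end{equation*}
The crucial observation is that for $u\in\M(x)$ one has $u_{ij}x_i^-x_j^-=0$ for all $i\ne j$: if $x_i^->0$ then $x_i^+=0$, and the state-dependent constraint $\sum_{k\ne i}u_{ik}x_k^-\le x_i^+=0$ forces $u_{ik}x_k^-=0$ for every $k\ne i$. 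Splitting $x_i=x_i^+-x_i^-$ in the cross term and using this identity reduces it to $\sum_{i\ne j}(\gamma_i-\mu_{ij})u_{ij}x_i^+x_j^-$. Since $\sum_{j\ne i}u_{ij}x_j^-=x_i^+-q_i(x,u)$, the $\gamma_i$-piece cancels $-\sum_i\gamma_i(x_i^+)^2$ and leaves $-\sum_i\gamma_i x_i^+ q_i(x,u)$, while the $-\mu_{ij}$-piece is non-positive. Thus
\begin{equation*}
\sum_i x_i b_i(x,u)\;\le\; C_1|x|-\mu_{\min}\sum_i(x_i^-)^2-\gamma_{\min}\sum_i x_i^+\,q_i(x,u).
\end{equation*}

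Finally I split into two cases on $y\df\sum_i x_i^+$ and $z\df\sum_j x_j^-$. If $y\le 2z$, then $\sum_i(x_i^+)^2\le y^2\le 4z^2\le 4d\sum_j(x_j^-)^2$, whence $|x|^2\le(4d+1)\sum_j(x_j^-)^2$ and the $-\mu_{\min}\sum(x_i^-)^2$ dissipation already dominates $|x|^2$. If $y>2z$, then the column-sum constraint $\sum_{i\ne j}u_{ij}\le 1$ gives $\sum_i q_i(x,u)\ge y-z\ge y/2$; combining Cauchy-Schwarz with $0\le q_i\le x_i^+$ yields
\begin{equation*}
\sum_i x_i^+ q_i(x,u)\;\ge\;\sum_i q_i(x,u)^2\;\ge\; \frac{1}{d}\Bigl(\sum_i q_i(x,u)\Bigr)^{\!2}\;\ge\;\frac{y^2}{4d}\;\ge\;\frac{1}{4d}\sum_i(x_i^+)^2,
\end{equation*}
while the case hypothesis also forces $\sum_j(x_j^-)^2\le y^2/4\le(d/4)\sum_i(x_i^+)^2$, so $|x|^2$ is again controlled. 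Either case delivers $c_0>0$ independent of $u$, completing the reduction.

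The main obstacle is the cross term $\sum_{i\ne j}(\gamma_i-\mu_{ij})u_{ij}x_i x_j^-$, which a priori can be of order $|x|^2$ with the wrong sign when $\gamma_i>\mu_{ij}$; what rescues the argument is the state-dependent structure of $\M(x)$, which both kills the $x_i^-x_j^-$ component identically and reshapes the $x_i^+x_j^-$ component into the sign-correct dissipative quantity $\gamma_i x_i^+ q_i(x,u)$, after which a straightforward case analysis recovers full quadratic dissipation.
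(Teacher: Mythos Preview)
Your proof is correct and follows essentially the same approach as the paper: both hinge on the observation that the state constraint $\sum_{j\ne i}u_{ij}x_j^-\le x_i^+$ forces $u_{ij}x_i^-x_j^-=0$, which converts the potentially sign-indefinite cross term into the dissipative quantity $-\sum_i\gamma_i x_i^+ q_i(x,u)\le -\sum_i\gamma_i q_i(x,u)^2$ (together with the manifestly nonpositive $-\sum\mu_{ij}u_{ij}x_i^+x_j^-$). The only difference is in the last step: the paper obtains the coercivity $|x|^2\le 2d\sum_i\bigl(q_i(x,u)^2+d^2(x_i^-)^2\bigr)$ in one line via $\sum_i|x_i|\le\sum_i(q_i+dx_i^-)$, whereas you reach the same conclusion by a case split on $\sum x_i^+$ versus $\sum x_j^-$, invoking the column-sum constraint to lower-bound $\sum_i q_i$ in the second case. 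The paper's route is a bit shorter, but your argument is equally valid.
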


\begin{proof}
Recall from \eqref{dc6} that for $u\in\M(x)$,
\begin{align*}
b_i(x, u) &= \ell_i + \mu_i x_i^- -\sum_{j:j\neq i}\mu_{ij}u_{ij}x_j^- -\gam_iq_i(x, u),
\\[5pt]
q_i(x, u) & = x_i^+ - \sum_{j:j\neq i}u_{ij}x^-_j.
\end{align*}
For $x\in\Rd$, and $u\in\M(x)$, we have $q_i(x, u)\geq 0$, and
\begin{align*}
\sum_{i=1}^d\abs{x_i} &= \sum_{i=1}^d(x^+_i + x^-_i)
\\
& \leq \sum_{i=1}^d\Bigl(q_i(x, u) +\sum_{j:j\neq i} u_{ij}x^-_j+ x^-_i\Bigr)
\\
& \leq \sum_{i=1}^d\Bigl(q_i(x, u) + d\, x^-_i\Bigr),
\end{align*}
where in the last line above we use the fact that $u_{ij}\leq 1$. Hence
\begin{equation}\label{17}
\abs{x}^2 \leq d\, \sum_{i=1}^d\Bigl(q_i(x, u) + d\, \abs{x^-_i}\Bigr)^2
\leq\; 2d \sum_{i=1}^d\Bigl((q_i(x, u))^2 + d^2\, \abs{x^-_i}^2\Bigr).
\end{equation}
We note that for any $u\in\M(x)$, we have
$$q_i(x, u)\cdot x_i= q_i(x, u)\cdot x^+_i\geq q^2_i(x, u).$$
Thus for $u\in\M(x)$ and $\abs{x}\geq 1$, we obtain
\begin{align}\label{18}
\sum_{i=1}^d b_i(x, u)\cdot\partial_i\Lyap(x)& \leq k\,\abs{x}^{k-2}\Bigl(\sum_{i=1}^d \abs{\ell_i} \abs{x_i}  - \mu_i\abs{x_i^-}^2
-x_i\sum_{j:j\neq i}\mu_{ij}u_{ij}x_j^- \nonumber
\\
&\, \hspace{.2in}-\gam_i q_i(x, u)x_i\Bigr)\nonumber
\\
&\leq k\,\abs{x}^{k-2}\Bigl(\sum_{i=1}^d \abs{\ell_i} \abs{x_i}  - \mu_i\,\abs{x_i^-}^2
-\gam_i\, q^2_i(x, u)\Bigr)\nonumber
\\
&\leq \kap_1\abs{x}^{k-1} -\kap_2\abs{x}^{k},
\end{align}
for some constants $\kap_1, \kap_2>0,$ where in the second inequality we use the fact that 
$$ x_i\sum_{j:j\neq i}\mu_{ij}u_{ij}x_j^-=x_i^+\, \sum_{j:j\neq i}\mu_{ij}u_{ij}x_j^-\geq 0,$$
for $u\in\M(x)$, and in the third inequality we use \eqref{17}. Now the proof can be seen using \eqref{18}.
\end{proof}

Next we discuss a convex analytic approach that assures the existence of an optimal control. To do this 
we introduce some more notations. For $U\in\Uadm$, we define
$$\varrho_{U}(x) \; \df\; \limsup_{T\to\infty}\, \frac{1}{T}\,\Exp^U_x[\int_0^T\Tilde{r}(X(t), U(t)) dt].$$
We use the notation $\Exp^U[\cdot]$ to express the dependency in $U$. For $\beta>0$, we define
\begin{align*}
\Uadm^\beta\;\df\; \{U\in\Uadm\; :\; \varrho_U(x)\leq \beta \; \text{for some }\, x\in\Rd\}.
\end{align*}
Let $\Usm^\beta=\Usm\cap\Uadm^\beta$. Denote
\begin{align*}
\varrho_{*} &\; \df \;\inf\{\beta > 0\, :\, \Uadm^\beta\neq\emptyset\},
\\[5pt]
\hat\varrho_{*} &\; \df \;\inf\{\beta > 0\, :\, \Usm^\beta\neq\emptyset\},
\\[5pt]
\tilde\varrho_{*} &\; \df \;\inf\{\uppi(\Tilde{r}) \, :\, \uppi\in\mathcal{G}\},
\end{align*}
where 
$$\uppi(\Tilde{r}) \;\df\; \int_{\Rd\times\M} \Tilde{r}(x, u) \uppi(\D{x}, \D{u}),$$
and,
\begin{align}\label{0001}
\mathcal{G}\;\df\;\Big\{\uppi\in\cP(\Rd\times\M)\; &:\; \int_{\Rd\times\M} L^u f(x) \uppi(\D{x}, \D{u}) =0, \; \text{for all}\,
f\in\calC_c^\infty(\Rd), \nonumber
\\
&\, \hspace{.2in}\, \text{and}\, \int_{\Rd\times\M}1_{\M(x)}(u)\uppi(\D{x}, \D{u})=1 \Big\}.
\end{align}
In what follows, we denote by $\uptau(A)$ the \emph{first exit time} of a process
$\{X_{t}\,,\;t\in\R_{+}\}$ from a set $A\subset\Rd$, defined by
\begin{equation*}
\uptau(A) \;\df\;\inf\;\{t>0\, : \,X_{t}\not\in A\}\,.
\end{equation*}
The open ball of radius $R$ in $\Rd$, centered at the origin,
is denoted by $\B_{R}$, and we let $\uptau_{R}\;\df\;\uptau(\B_{R})$,
and $\tc_{R}\df \uptau(\B^{c}_{R})$.

\begin{theorem}\label{T-exist}
We have 
\begin{enumerate}
\item[(a)]$\varrho_{*}=\hat\varrho_{*}=\Tilde\varrho_{*}$.
\item[(b)] There exists $v\in\Usm$ such that $\varrho_v=\varrho_{*}$.
\end{enumerate}
\end{theorem}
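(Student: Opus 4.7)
The plan is to establish (a) via the chain of inequalities $\varrho_{*}\le \hat\varrho_{*}\le \tilde\varrho_{*}\le \varrho_{*}$ using the standard ergodic occupation measure approach (cf.\ \cite{ari-bor-ghosh}), and then derive (b) from the compactness of $\Usm$ established in Proposition~\ref{compact} combined with the Foster--Lyapunov estimate in Lemma~\ref{lem-stability}. The inclusion $\Usm\subset\Uadm$ gives $\varrho_{*}\le\hat\varrho_{*}$ for free.

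For $\tilde\varrho_{*}\le\varrho_{*}$, I would take $U\in\Uadm^\beta$ and form the mean empirical measures
$$\uppi_T(A\times B)\;\df\;\frac{1}{T}\,\Exp^{U}_{x}\!\int_0^T 1_A(X(t))\,1_B(U(t))\,\D t.$$
Lemma~\ref{lem-stability} applied to $\Lyap$ yields $\frac{1}{T}\Exp^U_x\!\int_0^T h(X(t))\,\D t \le c_5/c_6 + \Lyap(x)/(c_6 T)$, which forces tightness of $\{\uppi_T\}$ in the $x$-marginal (the $u$-marginal lives in the compact set $\M$). Along a subsequence $T_n\to\infty$, $\uppi_{T_n}\to\uppi_\infty$ weakly. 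Itô's formula on $f\in\calC_c^\infty(\Rd)$, divided by $T$, forces $\int L^u f\,\D\uppi_\infty=0$; and since $U(t)\in\M(X(t))$ a.s., each $\uppi_T$ is supported on $\{(x,u):u\in\M(x)\}$, a property preserved in the limit because the indicator $1_{\M^c(\cdot)}(\cdot)$ is lower semicontinuous (as in the proof of Proposition~\ref{compact}). Hence $\uppi_\infty\in\mathcal{G}$, and Fatou gives $\uppi_\infty(\tilde r)\le\beta$. For $\hat\varrho_{*}\le\tilde\varrho_{*}$, given $\uppi\in\mathcal{G}$ I would disintegrate as $\uppi(\D x,\D u)=\eta(\D x)\,v(\D u\mid x)$; the constraint $\int L^u f\,\D\uppi=0$ identifies $\eta$ as an invariant probability measure for the controlled diffusion under Markov control $v$, while the second constraint in \eqref{0001} ensures $v(\M^c(x)\mid x)=0$ $\eta$-a.s., so $v\in\Usm$. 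Uniqueness of the invariant measure for a non-degenerate diffusion under a stationary Markov control, together with the individual ergodic theorem, yields $\varrho_v=\uppi(\tilde r)$.

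For part (b), let $v_n\in\Usm$ be a minimizing sequence, $\varrho_{v_n}\searrow\hat\varrho_{*}$. By Proposition~\ref{compact}, along a subsequence $v_n\to v_{*}\in\Usm$. Let $\eta_n$ denote the invariant measure of $X$ under $v_n$; Lemma~\ref{lem-stability} gives the uniform moment bound $\int h\,\D\eta_n \le c_5/c_6$, so $\{\eta_n\}$ is tight. A standard argument based on the $L^1$-convergence of transition densities established inside the proof of Proposition~\ref{compact}, applied to the invariance identity $\int L^{v_n(x)} f(x)\,\eta_n(\D x)=0$ for $f\in\calC_c^\infty(\Rd)$, shows that any subsequential limit of $\eta_n$ is an invariant measure for $v_{*}$; uniqueness then gives $\eta_n\to\eta_{v_{*}}$. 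The uniform tail bound on $h$ promotes weak convergence to convergence of $\int\tilde r\,\D(\eta_n\otimes v_n)$ since $\tilde r$ has polynomial growth dominated by $h$ (choosing $k$ large in Lemma~\ref{lem-stability}), yielding $\varrho_{v_{*}}=\lim\varrho_{v_n}=\hat\varrho_{*}=\varrho_{*}$.

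The main obstacle will be the step asserting $\uppi_\infty\in\mathcal{G}$ in the limit, specifically showing that the mass of $\uppi_\infty$ remains concentrated on the graph of the state-dependent constraint set $\{(x,u):u\in\M(x)\}$. Standard treatments assume a fixed control set, so here I need to exploit the lower semicontinuity of $x\mapsto 1_{\M^c(x)}(u)$ and a monotone approximation by continuous functions $g_k\nearrow 1_{\M^c}$ as in the proof of Proposition~\ref{compact}. A secondary difficulty is the unboundedness of $\tilde r$, handled by uniform integrability furnished by choosing a Lyapunov function with sufficiently large exponent $k$ in Lemma~\ref{lem-stability}, so that $\tilde r(x,u)=o(h(x))$ uniformly in $u\in\M$.
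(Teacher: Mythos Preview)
Your plan for part~(a) matches the paper's proof almost exactly: mean empirical measures, tightness via Lemma~\ref{lem-stability}, preservation of the constraint $u\in\M(x)$ in the limit via lower semicontinuity of $(x,u)\mapsto 1_{\M^c(x)}(u)$, and disintegration to recover a Markov control. One point the paper makes explicit that you gloss over: after disintegrating $\uppi(\D x,\D u)=v(\D u\mid x)\,\mu_v(\D x)$, the constraint $v(\M^c(x)\mid x)=0$ holds only $\mu_v$-a.e., and you need $\mu_v$ to be mutually absolutely continuous with Lebesgue measure (which follows from nondegeneracy, via \cite[Theorem~2.6.16]{ari-bor-ghosh}) before you can redefine $v$ on a Lebesgue-null set and conclude $v\in\Usm$.

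For part~(b) you take a genuinely different route. The paper minimizes directly over $\mathcal{G}$: it takes a minimizing sequence $\uppi_n\in\mathcal{G}$, uses Lemma~\ref{lem-stability} to get tightness, passes to a weak limit $\uppi$, checks $\uppi\in\mathcal{G}$ via lower semicontinuity, and then disintegrates $\uppi$ to obtain the optimal Markov control. You instead work in $\Usm$, invoking Proposition~\ref{compact} to extract a limiting control $v_*$, and then separately argue that the invariant measures $\eta_n$ converge to $\eta_{v_*}$ and that the costs converge. Your route is correct but strictly more laborious: the step ``$\eta_n\to\eta_{v_*}$'' requires continuity of invariant densities in the Markov control (a result in \cite{ari-bor-ghosh}, but not as immediate as you suggest), and passing the cost to the limit requires combining this with the weak topology on $\Usm$, which tests against Lebesgue measure rather than against $\eta_n$. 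The paper's approach sidesteps all of this by treating $\uppi_n=\eta_n\otimes v_n$ as a single object in $\mathcal{G}$; weak convergence of $\uppi_n$ automatically couples the convergence of marginals and conditionals, and lower semicontinuity of $\tilde r$ gives $\uppi(\tilde r)\le\liminf\uppi_n(\tilde r)$ in one line. In short, Proposition~\ref{compact} is not needed here, and working in $\mathcal{G}$ rather than in $\Usm$ is the cleaner decomposition.
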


\begin{proof}
Let $U\in \Uadm$. Applying It\^{o}'s formula, it follows from \eqref{16} that
\begin{equation*}
\frac{1}{T}\Big(\Exp_x^U\big[\Lyap(X({\uptau_R\wedge T}))\big] - \Lyap(x)\Big)
\leq c_5 - c_6 \frac{1}{T}\,\Exp_x^U\Bigl[\int_0^{\uptau_R\wedge T} h(X(s))\D{s}\Bigr].
\end{equation*}
Letting $R\to\infty$, and then $T\to\infty$, we see that
$$\limsup_{T\to\infty}\, \frac{1}{T}\, \Exp_x^U\Bigl[\int_0^{ T} h(X(s))\D{s}\Bigr]<\infty.$$
Since $h$ is inf-compact, the above display implies that the \textit{mean-empirical measures} $\{\zeta_{x, t}\, : t> 0\}$,
defined by
$$\zeta_{x, t}(A\times B)\; \df\; \frac{1}{t}\Exp_x\Big[\int_0^t 1_{A\times B}(X(s), U(s))\, \D{s}\Big],$$
are tight. Since
$$ \int_{\Rd\times\M} 1_{M^c(x)}(u)\zeta_{x, t}(\D{x}, \D{u})\; =\; 0,$$
and $(x, u)\mapsto 1_{M^c(x)}(u)$ is lower-semicontinuous, it is easy to see that every sub sequential limit of 
$\{\zeta_{x, t}\, : t>0\}$, as $t\to\infty$, lies in $\mathcal{G}$. Also if $\uppi$ is one of the limits of $\{\zeta_{x, t}\, : t > 0\}$, we
get 
$$\uppi(\Tilde{r}) \;\leq \;  \limsup_{T\to\infty}\, \frac{1}{T}\, \Exp_x^U\Bigl[\int_0^{ T} 
\tilde{r}(X(s), U(s))\D{s}\Bigr].$$
This shows that $\tilde{\varrho}_{*}\leq \varrho_{*}$. Let $\uppi\in\mathcal{G}$. Using disintegration of measure we write
$\uppi(\D{x}, \D{u}) = v(\D{u} | x) \mu_v(\D{x})$. Therefore by definition, we have
$$\int_{\Rd} L^v f(x) \mu_v(\D{x})= \int_{\Rd} \int_{M}L^u f(x) v(\D{u}|x) \mu_v(\D{x}) =0, \quad \forall \; f\in\calC_c^\infty(\Rd).$$
Hence applying \cite[Theorem~2.6.16]{ari-bor-ghosh} we see that $\mu_v(\D{x})$ has locally strictly positive density. In
particular, $\mu_v(\D{x})$ is mutually absolutely continuous with respect to the Lebesgue measure on $\Rd$.
Combining these observations with the fact that 
$$\int_{\Rd\times\M}1_{M^c(x)}(u)\uppi(\D{x}, \D{u})=\int_{\Rd\times\M}v(\M^c(x) | x)\mu_v(\D{x})=0,$$
we conclude that $v(\M^c(x) | x)=0$ almost surely with respect to the Lebesgue measure on $\Rd$. 
Since $v(\M^c(x) | x)$ is Borel-measurable we can modify $v$ on a Borel set of measure $0$
so that $v(\M^c(x) | x)=0$ holds everywhere. Hence
the stationary solution $X(t)$ corresponds to the Markov control $v(\cdot | x)$ would satisfy
$v(\M(X(t)) | X(t))=1$ almost surely. Thus $v(\cdot | x)$ is an admissible Markov control.
By ergodic theorem \cite{yosida},\cite[Theorem~1.5.18]{ari-bor-ghosh} it is know that
$$\limsup_{T\to\infty}\, \frac{1}{T}\Exp^v_x\Bigl[\int_0^T \Tilde{r}(X(s), v(X(s))) \D{s}\Bigr]\xrightarrow[n\to\infty]{}
\uppi(\Tilde{r}), \quad\text{for almost every }\, x\in\Rd.$$
Note that if $\Tilde{r}(X(s), v(X(s)))$ is continuous we could use weak convergence of mean-empirical
measures, corresponding to $X$, to justify the above limit. But  $v$ need not be continuous, in general. So we use ergodic
theorem to pass the limit.
Thus $\hat\varrho_{*}\leq \tilde\varrho_{*}$. But by definition $\varrho_*\leq \hat{\varrho}_{*}$. Thus we have
$\varrho_*= \hat{\varrho}_{*}=\tilde\varrho_{*}$. This proves (a).

To prove (b), we consider a sequence $\uppi_n\in \mathcal{G}$ along which the infimum is achieved. Applying Lemma~\ref{lem-stability} we obtain that the measures $\{\uppi_n\}$ are tight and
 thus it has a convergent subsequence.  Let $\uppi$ be one
of the subsequential limits and $\uppi_n\to\uppi$, after relabelling, as $n\to\infty$. Lower-semicontinuity of $(x, u)\mapsto 1_{\M^c(x)}(u)$ implies
that $\uppi\in\mathcal{G}$. Moreover,
$$\uppi(\Tilde{r})\leq \liminf_{n\to\infty}\,\uppi_n(\Tilde{r})=\varrho_{*}.$$
Thus the infimum is achieved at $\uppi$ and the Markov control is obtained by disintegrating 
$\uppi(\D{x}, \D{u})=v(\D{u} | x) \mu_v(\D{x})$ where $v(\D{u}| \cdot)$ is our required Markov control.
\end{proof}

\begin{remark}
We observe that the arguments of Theorem~\ref{T-exist} continue to hold if we replace $\tilde{r}$ by $\tilde{r}_\eps$ and
$\varrho$ by $\varrho_\eps$. The above theorem also establishes independence of $\varrho$ (and $\varrho_\eps$) from
$x$.
\end{remark}
The above existence result of optimal Markov control is purely analytic and it does not provide any characterization of the optimal controls.
To prove Theorem~\ref{T-optimality} we need to find an optimal control with some regularity properties such as continuity.
But the above result does not say anything about the regularity properties of optimal Markov controls. Therefore we will
analyze the associated HJB to extract more information about the optimal Markov controls.

For $\al\in(0, 1)$, we define
\begin{equation}\label{20}
V^\al_\eps(x)\; \df\; \inf_{U\in\Uadm}\Exp_x\Big[\int_0^\infty e^{-\al\, s} \Tilde{r}_\eps(X(s), U(s))\, \D{s}\Big].
\end{equation}
The following result characterize the $\alpha$-discounted problem.

\begin{theorem}\label{T-discount}
For every $\eps\in(0, 1)$, $V^\al_\eps\in\calC^2(\Rd)$  satisfies the HJB
\begin{equation}\label{40}
\min_{u\in\M(x)} \Bigl(L^u V^\al_\eps(x) + \Tilde{r}_\eps(x, u)\Bigr) -\al V^\al_\eps= 0.
\end{equation} 
Moreover, the unique minimizing selector is an optimal Markov control for \eqref{20}.
\end{theorem}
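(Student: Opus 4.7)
The plan is to establish existence, regularity, and the verification identity for $V^\al_\eps$ by combining an a priori polynomial bound, a Dirichlet approximation on balls, and an It\^o-based verification argument that leverages Lemma~\ref{lem-cont}.

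First I would obtain an a priori growth estimate on $V^\al_\eps$. Applying It\^o's formula to $\E^{-\al t}\Lyap(X(t))$ under an arbitrary $U\in\Uadm$ and invoking Lemma~\ref{lem-stability}, one has
\begin{equation*}
\Exp_x\Bigl[\E^{-\al T}\Lyap(X(T))\Bigr]-\Lyap(x)\;\le\;\Exp_x\Bigl[\int_0^T \E^{-\al s}\bigl(c_5-c_6 h(X(s))-\al\Lyap(X(s))\bigr)\D{s}\Bigr],
\end{equation*}
so that, choosing the exponent $k$ in $\Lyap$ at least $m$ and using $\Tilde r_\eps(x,u)\le \kap_1(1+\abs{x}^m)+\eps\,\bvnorm{u}$ uniformly in $u\in\M$, we get $V^\al_\eps(x)\le \kap_2(1+\Lyap(x))$ and $\Exp_x\bigl[\int_0^\infty \E^{-\al s}h(X(s))\D s\bigr]<\infty$. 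This uniform polynomial bound will be essential for the passage to infinity.

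Second I would solve the HJB equation on balls and pass to the limit. For each $R>0$ consider the Dirichlet problem
\begin{equation*}
\min_{u\in\M(x)}\bigl(L^u\psi_R(x)+\Tilde r_\eps(x,u)\bigr)-\al\psi_R(x)\;=\;0\quad\text{in }\B_R,\qquad \psi_R=0\text{ on }\partial\B_R.
\end{equation*}
Because $\M(x)$ is compact convex, $\Tilde r_\eps(x,\cdot)$ is strictly convex on $\M(x)$, and by Lemma~\ref{lem-cont} the minimizing selector $\ph(x,\grad\psi_R(x))$ is continuous in $(x,p)$, the Hamiltonian $H(x,p,M)\df \min_{u\in\M(x)}\bigl[\tfrac12\trace(aM)+b(x,u)\cdot p+\Tilde r_\eps(x,u)\bigr]$ inherits the local H\"older regularity of Lemma~\ref{lem-holder}--Lemma~\ref{lem-holder-0}. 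Together with the uniform ellipticity of $a$ and the compactness of $\M$, standard fully nonlinear elliptic theory produces $\psi_R\in\calC^{2,\beta}(\bar\B_R)$; an It\^o verification on $[0,\uptau_R\wedge T]$ identifies $\psi_R(x)$ with $\inf_{U\in\Uadm}\Exp_x\bigl[\int_0^{\uptau_R}\E^{-\al s}\Tilde r_\eps(X(s),U(s))\D s\bigr]$, the infimum being attained by the continuous Markov control $v_R(x)=\ph(x,\grad\psi_R(x))$. Using the global bound $\psi_R\le\kap_2(1+\Lyap)$ from Step~1 (which persists because $\psi_R$ is a value function of a stopped problem bounded by $V^\al_\eps$) and interior $\Sob^{2,p}$ estimates, I extract along $R\to\infty$ a subsequential limit $V^\al_\eps\in\Sobl^{2,p}(\Rd)\cap\calC_{\mathrm{pol}}(\Rd)$; the local H\"older regularity of $H$ then upgrades this limit to $\calC^2(\Rd)$ and yields \eqref{40}.

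Finally I would close the loop by verification. For any $U\in\Uadm$, It\^o's formula applied to $\E^{-\al t}V^\al_\eps(X(t))$ combined with \eqref{40} gives
\begin{equation*}
V^\al_\eps(x)\;\le\;\Exp_x\Bigl[\int_0^T\E^{-\al s}\Tilde r_\eps(X(s),U(s))\D s\Bigr]+\Exp_x\Bigl[\E^{-\al T}V^\al_\eps(X(T))\Bigr].
\end{equation*}
The polynomial bound on $V^\al_\eps$ together with $\Exp_x\bigl[\Lyap(X(T))\bigr]=O(1+\Lyap(x))$ from Step~1 lets me send $T\to\infty$ to conclude $V^\al_\eps(x)\le J^\al_\eps(x,U)$. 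For the Markov control $v_\eps(x)\df\ph(x,\grad V^\al_\eps(x))$, which is continuous by Lemma~\ref{lem-cont} and admissible since $v_\eps(x)\in\M(x)$, equality holds in the previous display, so $V^\al_\eps$ coincides with the value function defined in \eqref{20} and $v_\eps$ is optimal. Uniqueness of the minimizing selector of \eqref{40} follows immediately from strict convexity of $\Tilde r_\eps(x,\cdot)$ on the convex set $\M(x)$.

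The principal obstacle is that the admissible set $\M(x)$ varies with $x$, so classical results on HJB equations for control problems with a fixed action space do not apply directly. This is overcome by exploiting the convex structure (which gives the unique continuous selector $\ph$ of Lemma~\ref{lem-cont}) to secure enough regularity of the Hamiltonian to run the elliptic estimates in Step~2 and to justify the verification in Step~3; the uniform stability of Lemma~\ref{lem-stability} supplies the polynomial tail control that ties the bounded-domain approximations together.
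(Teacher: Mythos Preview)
Your proposal is correct and follows essentially the same route as the paper: obtain a polynomial a priori bound via Lemma~\ref{lem-stability}, solve the Dirichlet problem on $\B_R$ using the H\"older regularity of the Hamiltonian (Lemma~\ref{lem-holder}) and the continuity of the unique selector (Lemma~\ref{lem-cont}), identify $\psi_R$ with the stopped value function, pass to the limit via $\Sob^{2,p}$ estimates, and close with an It\^o verification. The only minor difference is in the verification tail: the paper discards the boundary term by using nonnegativity of the limit $\varphi$ and sending $R\to\infty$ along $\uptau_R$, whereas you keep track of the polynomial growth of $V^\al_\eps$ together with the uniform bound $\Exp_x[\Lyap(X(T))]=O(1+\Lyap(x))$ and send $T\to\infty$; both arguments are valid here.
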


Before we prove the theorem we need to establish some regularity properties of the Hamiltonian. To do so we define,
$$ H_\eps(x, p)\; \df \; \min_{u\in\M(x)}\{p\cdot b(x, u) + \Tilde{r}_\eps(x, u)\}.$$

\begin{lemma}\label{lem-holder}
Let $\calK\subset\Rd$ be compact. Then for any $R>0$, there exists a constant $c=c(\calK, R, \eps)$, 
depending on $\calK, R$ and $\eps$, such that
$$\Big|H_\eps(x, p)- H_\eps(y, q) \Big|\; \leq \; c\; (\abs{x-y}^{\frac{1}{4}} + \abs{p-q}), \quad \text{for all}\quad x,\, y\in\calK,
\; \text{and}, \; p, \, q\in\B_{R}(0).$$ 
\end{lemma}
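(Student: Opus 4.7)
The plan is to establish the two continuities separately: Lipschitz in $p$ and H\"older with exponent $1/4$ in $x$. The $p$-continuity is easy: for fixed $x \in \calK$ the set $\{b(x, u) : u \in \M\}$ is bounded by some $C_\calK$, so if $u_q \in \M(x)$ attains the minimum defining $H_\eps(x, q)$, then $u_q$ is also feasible at $(x, p)$, giving $H_\eps(x, p) - H_\eps(x, q) \leq (p - q) \cdot b(x, u_q) \leq C_\calK \abs{p-q}$; by symmetry $\abs{H_\eps(x, p) - H_\eps(x, q)} \leq C_\calK \abs{p-q}$.

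The substantive part is the H\"older $1/4$ bound in $x$. Fix $p \in \B_R(0)$, $x, y \in \calK$, set $\delta = \abs{x-y}$, and let $v = \varphi(y, p)$ be the unique minimizer at $y$. The goal is to produce $\hat v \in \M(x)$ close to $v$ so that $H_\eps(x, p) - H_\eps(y, p) \leq f^x_p(\hat v) - f^y_p(v)$, where $f^z_p(u) := p \cdot b(z, u) + \Tilde{r}_\eps(z, u)$. Choose the threshold $\eta := \delta^{1/4}$ and construct $\hat v$ row by row: if $y_i^+ \geq \eta$ (case A), set $\hat v_{i\cdot} := \alpha_i v_{i\cdot}$ with $\alpha_i := \min(1, x_i^+/S_i^x(v))$, where $S_i^x(v) := \sum_{j \neq i} v_{ij} x_j^-$. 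Using $S_i^x(v) \leq S_i^y(v) + d\delta \leq y_i^+ + d\delta$ one readily verifies $1 - \alpha_i \leq 2(d+1)\delta/\eta = 2(d+1)\delta^{3/4}$ whenever $\alpha_i < 1$. If $y_i^+ < \eta$ (case B), set $\hat v_{i\cdot} := 0$. Feasibility $\hat v \in \M(x)$ follows by construction (the row inequalities hold by the choice of $\alpha_i$, and column sums only decrease).

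The main obstacle is controlling the quadratic penalty $\eps\bigl(\bvnorm{\hat v} - \bvnorm{v}\bigr)$ in case B, where zeroing a whole row could a priori change $\bvnorm{v}$ by $O(1)$. This is overcome by a first-order optimality argument for $v = \varphi(y, p)$: whenever $v_{ij} > 0$, the perturbation $v - t e_{ij}$ (for small $t > 0$) remains in $\M(y)$, so $\partial_{v_{ij}} f^y_p(v) \leq 0$, which reads $2\eps v_{ij} \leq y_j^-\bigl(p_i(\mu_{ij} - \gamma_i) + \xi_i\bigr)$ for some subgradient $\xi \in \partial r(q(y, v))$. Since $|\xi|$ is bounded on the compact range of $q$ by the local Lipschitz constant of $r$, this yields $v_{ij} \leq (C_{R,\calK}/\eps)\,y_j^-$ uniformly over entries with $v_{ij} > 0$. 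Consequently, in case B,
\[
\sum_{j \neq i} v_{ij}^2 \;\leq\; \frac{C_{R,\calK}}{\eps}\sum_{j \neq i} v_{ij}\,y_j^- \;\leq\; \frac{C_{R,\calK}}{\eps}\,y_i^+ \;<\; \frac{C_{R,\calK}}{\eps}\,\eta,
\]
so the case-B contribution to $\eps(\bvnorm{\hat v} - \bvnorm{v})$ is $O(\delta^{1/4})$.

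With these tools, split $f^x_p(\hat v) - f^y_p(v) = [f^x_p(\hat v) - f^x_p(v)] + [f^x_p(v) - f^y_p(v)]$. The second bracket is $O(\delta)$ by straightforward Lipschitz continuity of $b(\cdot, v)$ and $\Tilde{r}_\eps(\cdot, v)$ on $\calK$ for fixed $v$. For the first bracket, case-A rows contribute $O(\delta)$ and case-B rows contribute $O(\eta) = O(\delta^{1/4})$ to both the drift change $b(x, \hat v) - b(x, v)$ and to $\abs{q(x, \hat v) - q(x, v)}$; local Lipschitz of $r$ then gives the $O(\delta^{1/4})$ bound on the $r$-term. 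The penalty difference gives $O(\eps \delta^{3/4})$ from case A (using $1 - \alpha_i^2 \leq 2(1-\alpha_i)$ and $\sum_j v_{ij}^2 \leq d$) and $O(\delta^{1/4})$ from case B as above. Summing yields $H_\eps(x, p) - H_\eps(y, p) \leq c_{\calK, R, \eps}\,\delta^{1/4}$; swapping the roles of $x$ and $y$ gives the reverse inequality. Combining with the Lipschitz-in-$p$ estimate completes the proof.
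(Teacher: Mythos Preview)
Your proof is correct, and it takes a genuinely different route from the paper's. Both arguments handle the $p$-Lipschitz part identically and then construct, given the minimizer $v\in\M(y)$, a nearby feasible point in $\M(x)$; the difference lies in how this is done and in how optimality of $v$ is exploited. The paper classifies rows via the set $\cA(y)=\{i:\sum_{j:\,u_{ij}>\theta\abs{x-y}^{1/4}}y_j^-\ge\abs{x-y}^{1/2}\}$ and builds $\bar u$ by subtracting the fixed amount $\theta\abs{x-y}^{1/4}$ from each large entry (zeroing rows in $\cA^c$); the key bound on the minimizer (their claim \eqref{23}: if $y_j^-<\delta^2$ then $u_{ij}\le\theta\delta$) is obtained by a contradiction argument comparing $u$ with the matrix obtained by zeroing a single entry. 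You instead classify rows simply by whether $y_i^+\ge\eta=\abs{x-y}^{1/4}$, scale entire rows in case~A, and zero rows in case~B. The crucial control of $\sum_j v_{ij}^2$ in case~B comes from your first-order optimality (KKT) argument $2\eps v_{ij}\le C_{R,\calK}\,y_j^-$, which is sharper and more transparent than the paper's contradiction step; it immediately gives $\eps\sum_j v_{ij}^2\le C_{R,\calK}\sum_j v_{ij}y_j^-\le C_{R,\calK}\,y_i^+<C_{R,\calK}\,\eta$. Your row-scaling in case~A is also cleaner, since $(1-\alpha_i)S_i^x(v)=S_i^x(v)-x_i^+\le(d+1)\delta$ directly controls the drift and $q$-change without further case analysis. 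Both constructions yield the same H\"older exponent $1/4$ and the same dependence of the constant on $\eps$; the paper's route is more combinatorial, yours more variational.
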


\begin{proof}
We note that for any $x\in \calK$, we have
$$ \Big|H_\eps(x, p)- H_\eps(x, q) \Big|\; \leq \; \sup_{(x, u)\in\calK\times\M}\abs{b(x, u)} \; \abs{p-q}.$$
Therefore it is enough to show that for any $x, y\in\calK$, $q\in\B_R(0)$,
\begin{equation}\label{21}
H_\eps(x, q)- H_\eps(y, q) \leq c(\calK, R, \eps)\;   \abs{x-y}^{\frac{1}{4}}.
\end{equation}
Since $b$ and $r$ are locally Lipschitz,
we have a constant $\kap>0$, depending on $\calK$, such that
\begin{equation}\label{22}
\abs{b(x, u)-b(x, \bar{u})} + \abs{\Tilde{r}(x, u)- \Tilde{r}(x, \bar{u})} \; \leq \; \kap 
\sum_{i\neq j}\abs{u_{ij}x_j^- -\bar{u}_{ij}x_j^-}, \quad
\text{for}\;\; u, \bar{u}\in\M(x), \; x\in\calK.
\end{equation}
 Let $u\in\M(y)$ be a minimizer of 
$$\bigl\{q\cdot b(y, u') + \Tilde{r}_\eps(y, u')\bigr\},$$
in $\M(y)$. In fact, this is the unique minimizer because of the strict convexity of the functional.
Define $\theta\;\df\; [\frac{2\kap}{\eps}(\abs{q} + 1)]^{1/2}\vee d$. We claim that 
\begin{equation}\label{23}
\text{for any $\del\in(0, \frac{1}{\theta})$, there is no $u_{ij}> \theta\,\del$, if $y_j^-< \del^2.$}
\end{equation}
The above implies that either $u_{ij}\leq \theta\del$ or $\sum_{j: u_{ij}> \theta\del} y_j^-\geq \del^2\sum_{j: u_{ij}> \theta\del} 1$.
Supposing the contrary, we assume that \eqref{23} is not ture i.e., there exists $i, j, i\neq j$, such that 
\begin{center}
$u_{ij}\in (\del\theta, 1]$,\; and\; $y_j^- <\del^2$.
\end{center}
We define $\tilde{u}\in\M(y)$ as follows
\[
\tilde{u}_{j_1j_2}=\left\{\begin{array}{lll}
u_{j_1j_2} & \text{if}\; j_1\neq i, \text{or}\; j_2\neq j,
\\[5pt]
0 & \text{otherwise}.
\end{array}
\right.
\]
Then using \eqref{22} we obtain
\begin{align*}
\{q\cdot b(y, u) + \Tilde{r}_\eps(y, u)\}-\{q\cdot b(y, \tilde{u}) + \Tilde{r}_\eps(y, \tilde{u})\}
&\geq -\kap\, \abs{q}\del^2 - \kap \del^2 + \eps \abs{u_{ij}}^2
\\[5pt]
&\geq -(\abs{q}+ 1)\,\kap\del^2 + \eps\theta^2\del^2
\\[5pt]
&> 0.
\end{align*}
This contradicts that fact that $u\in\M(y)$ is a minimizer. This proves the claim \eqref{23}.

Now we proceed to prove \eqref{21}. We note that it is enough to show \eqref{21} for $\abs{x-y}<\frac{1}{\theta}$. Let $\abs{x-y}<\frac{1}{\theta}$. 
We define
 $\cA(y)\subset\{1, \ldots, d\}$ as follows.
\begin{align*}
\cA(y) &\; \df\; \{i\in\{1, \ldots, d\} \; :\; \sum_{j: u_{ij}> \theta \abs{x-y}^{1/4}} y_j^-\geq \abs{x-y}^{1/2}\}.
\end{align*}
Note that $\cA(y)$ could be empty. Observe that for any $i\in\cA^c$ we either have $u_{ij}\leq \theta \abs{x-y}^{1/4}$ for all
$j\neq i$, or $y_j^-\leq \abs{x-y}^{1/2}$ for all $j$ satisfying $u_{ij}>\theta\abs{x-y}^{1/4}$. But the second
situation does not occur due to \eqref{23}. Thus for $i\in\cA^c(y)$ we have $u_{ij}\leq \theta \abs{x-y}^{1/4}$
for all $j\neq i$.
Therefore there exists
a constant $\kap_1$, depending on $\calK$ and $\theta$, such that for all $i\in\cA^c(y)$,
\begin{equation}\label{24}
\sum_{j:j\neq i} u_{ij}y_j^-\leq \kap_1 \abs{x-y}^{\frac{1}{4}}.
\end{equation}
Now we define $\bar{u}\in\M$ as follows
\[
\bar{u}_{ij}=\left\{\begin{array}{lll}
0, & \text{it}\; i\in\cA^c(y),
\\[5pt]
0, & \text{if}\; i\in\cA(y)\;\; \text{and} \; u_{ij}\leq \theta\abs{x-y}^{1/4},
\\[5pt]
(u_{ij}-\theta\abs{x-y}^{1/4}), & \text{otherwise}.
\end{array}
\right.
\]
We show that $\bar{u}\in \M(x)$. To do so we only need to check that for $i\in\cA(y)$,
\begin{equation*}
\sum_{j: j\neq i} \bar{u}_{ij}x_j^- \;\leq\;  x^+_i.
\end{equation*}
For $i\in\cA(y)$, 
\begin{align*}
\sum_{j: j\neq i} \bar{u}_{ij}\, x_j^- &\;=\; \sum_{j: u_{ij}> \theta\abs{x-y}^4} ({u}_{ij}-\theta\abs{x-y}^{1/4})\, x_j^-
\\ 
&\leq \sum_{j: u_{ij}> \theta\abs{x-y}^4} ({u}_{ij}-\theta\abs{x-y}^{1/4})\,( y_j^- + \abs{x-y})
\\[5pt]
&\leq \; \sum_{j: j\neq i} u_{ij} y_j^- + \sum_{j: u_{ij}> \theta\abs{x-y}^4}(u_{ij}\abs{x-y}-\theta \abs{x-y}^{1/4}y_j^-)
\\
&\leq y_i^+ + \sum_{j: u_{ij}> \theta\abs{x-y}^4}(u_{ij}\abs{x-y}-\theta \abs{x-y}^{1/4}y_j^-)
\\
&\leq x^+_i + \abs{x-y} (1+\sum_{j:j\neq i}u_{ij}) - \theta \abs{x-y}^{3/4}
\\
&\leq x^+_i,
\end{align*}
where in the last line we use the fact that $\abs{x-y}^{1/4}d\leq \frac{d}{\theta^{1/4}}<\theta$. This shows $\bar{u}\in\M(x)$.
Also by the construction of $\bar{u}$ and \eqref{24} it is evident that for all $i, j\in\{1, \ldots, d\}, i\neq j,$
$$ \abs{u_{ij}y_j^- -\bar{u}_{ij}x_j^-}\leq \kap_2 \abs{x-y}^{1/4}, \quad \text{and} 
\; \;\Big|\abs{u_{ij}}^2-\abs{\bar{u}_{ij}}^2\Big|\leq \kap_2\abs{x-y}^{1/4},$$
for some constant $\kap_2$. Hence \eqref{21} follows using the above display and \eqref{22}.
\end{proof}

Now we are ready to prove Theorem~\ref{T-discount}.
\begin{proof}[Proof of Theorem~\ref{T-discount}]
The proof is in spirit of \cite[Theorem~3.5.6]{ari-bor-ghosh}. Two key ingredients for the proof are Lemma~\ref{lem-cont}
and Lemma~\ref{lem-holder}. Recall that $\B_R$ denotes the ball of radius $R$ around $0$. It is known
that there exists a solution $\varphi_R\in\calC^{2, \beta}(\B_R), \beta\in(0, 1/4),$ satisfying
\begin{equation*}
\begin{split}
\frac{1}{2}\sum_{i, j=1}^d a_{ij}\partial_{ij}\varphi_R(x) + H_\eps(x, \grad\varphi(x)) &\;=\; \al\varphi_R(x),\quad x\in\B_R,
\\[5pt]
\varphi &=0, \quad \text{on}\;\; x\in\partial\B_R.
\end{split}
\end{equation*}
The existence result follows from \cite[Theorem~11.4]{gil-tru} and maximum-principle together with Lemma~\ref{lem-cont}
and Lemma~\ref{lem-holder}. See \cite[Theorem~3.5.3]{ari-bor-ghosh} for a similar argument. Following
a similar argument as in \cite[Theorem~3.5.6]{ari-bor-ghosh} together with Lemma~\ref{lem-cont} we can show that
\begin{equation}\label{25}
\varphi_{R}(x) = \inf_{U\in\Uadm}\Exp_x\Big[\int_0^{\uptau_R}e^{-\al s}\Tilde{r}_\eps(X(s), U(s))\D{s}\Big].
\end{equation}
Therefore choosing $k >m$ in Lemma~\ref{lem-stability} and using \eqref{cost1} we see that
\begin{equation}\label{33}
V^\al_\eps(x)\leq \inf_{U\in\Uadm}\Exp_x[\int_0^\infty e^{-\al s}h (X(s))\, \D{s}]<\frac{\kap_1}{\al} + \Lyap(x),
\end{equation}
for some constant $\kap_1$. Hence using \eqref{25} we get that $\varphi_R(x)<\frac{\kap_1}{\al} + \Lyap(x)$
for all $x$. This shows that $\varphi_R$ is locally bounded, uniformly in $R$. Hence using standard theory of elliptic PDE's and 
Lemma~\ref{lem-cont} we obtain that for any domain $D\subset\Rd$ and $p\geq 1$,
$$\norm{\varphi}_{\Sob^{2, p}(D)} \leq \kap_2,$$
where the constant $\kap_2$ is independent of $R$. Hence we can extract a subsequence of $\varphi_R$
that converges to $\varphi$ as $R\to\infty$ in $\Sobl^{2, p}(\Rd), p\geq 2d,$ and in $\calC^{1, r}_{\mathrm{loc}}(\Rd)$
for $r\in(0, 1/4)$. Using locally Lipschitz property of $H_\eps(x, \cdot)$, by Lemma~\ref{lem-holder}, we obtain that $\varphi$ is a weak solution
to
\begin{equation}\label{30}
\min_{u\in\M(x)}(L^u\varphi(x) + \Tilde{r}_\eps(x, u))= \al \varphi(x), \quad \text{for}\quad x\in\Rd.
\end{equation}
Lemma~\ref{lem-holder} and the theory of elliptic PDE's give us $\varphi\in\calC^2(\Rd)$. From \eqref{25} we also have 
$\varphi\leq V^\al_\eps$. To show that equality we consider the minimizing selector $u(\cdot)$ of \eqref{30}. Existence of such selector
is assured from Lemma~\ref{lem-cont}. Hence we have
$$\frac{1}{2}\sum_{i, j=1}^d a_{ij}\partial_{i, j}\varphi + b(x, u(x))\cdot\grad\varphi(x) + \Tilde{r}_\eps(x, u(x)) =\al\varphi(x).$$
Since $u(x)\in\M(x)$ for all $x$, the solution $X$ corresponding to this Markov control is admissible. Therefore
applying It\^{o}'s formula we get
$$\Exp_x[e^{-\al t\wedge \uptau_R}\varphi(X(t\wedge\uptau_R))]-\varphi(x) = \Exp^u_x\Big[\int_0^{t\wedge \uptau_R}
e^{-\alpha s}\, \Tilde{r}_\eps(X(s), u(X(s)))\, \D{s}\Big].$$
Now we use the non-negativity of $\varphi$ to conclude that 
$$\Exp^u_x\Big[\int_0^{t\wedge \uptau_R}
e^{-\alpha s}\,\Tilde{r}_\eps(X(s), u(X(s)))\, \D{s}\Big]\leq \varphi(x),$$
Letting $R\to\infty$, in the above display and using Fatou's lemma we get
$$\Exp^u_x\Big[\int_0^{t}
e^{-\alpha s}\,\Tilde{r}_\eps(X(s), u(X(s)))\, \D{s}\Big]\leq \varphi(x).$$
This shows $V^\al_\eps=\varphi$ and $u$ is an optimal Markov control.
\end{proof}

By $\tilde{\Uadm}_{\mathrm{SM}}$ we denote the set of all admissible deterministic Markov controls, i.e., 
$\tilde{\Uadm}_{\mathrm{SM}}$ denotes collection of all measurable $v:\Rd\to\M$ such that $v(x)\in\M(x)$ for  all $x$. We recall that $\tc_r$ denotes the hitting time to the ball $\B_r$, i.e.,
$$\tc_r\; \df\; \inf\{t\geq 0 \; :\; X(t)\in \B_r\}.$$

\begin{lemma}\label{L4.5}
Let $\bar{V}^\al_\eps\;\df\; V^\al_\eps-V^\al_\eps(0)$. Then $\bar{V}^\al_\eps$ is bounded in $\Sobl^{2, p}(\Rd), p\geq 1,$
and $\{\al V^\al_\eps(0)\}_{\al\in(0, 1)}$ is also bounded. Let $(V_\eps, \varrho)\in\Sobl^{2, p}(\Rd)\times\R$ be any sub-sequnetial
limit of $(\bar{V}^\al_\eps, \al V^\al_\eps(0))$, as $\al\to 0$, then we have $V_\eps\in\calC^2(\Rd)$ that satisfies,
\begin{equation}\label{35}
\min_{u\in\M(x)}(L^u V_\eps(x) + \tilde{r}_\eps(x, u)) =\varrho.
\end{equation}
Moreover $\varrho\leq \varrho_{\eps}$. Furthermore,
\begin{enumerate}
\item[(a)] $V_\eps(x) \; \leq \; \liminf_{r\downarrow 0}\, \inf_{v\in\tilde{\Uadm}_{\mathrm{SM}}}\,
\Exp^v_x\Bigl[\int_0^{\tc_r}\bigl(\Tilde{r}_\eps(X(s), v(X(s))) -\varrho\bigr)\D{s}\Bigr].$
\item[(b)] If $u_\eps\in \tilde{\Uadm}_{\mathrm{SM}}$ denote the minimizing selector of \eqref{35} then 
$$ V_\eps(x) \;\geq \; -\varrho \Exp_x^{u_\eps}[\tc_r] - \sup_{\B_r} V_\eps, \quad \text{for all}\,\; x\in\B^c_r.$$
\end{enumerate}
\end{lemma}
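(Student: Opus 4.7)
The plan is the standard vanishing-discount argument. Since $u\equiv 0\in\M(x)$ for every $x$, the bound \eqref{33} gives $V^\al_\eps(x) \leq \kap_1/\al + \Lyap(x)$, so in particular $\al V^\al_\eps(0) \leq \kap_1 + \Lyap(0)$ uniformly in $\al\in(0,1)$. The main technical step, and the principal difficulty, is to show that $\bar V^\al_\eps$ is locally bounded in $L^\infty(\Rd)$ uniformly in $\al$: a priori $V^\al_\eps(0)$ can grow like $\al^{-1}$, so what we need is an oscillation-type estimate independent of the discount rate. I would view \eqref{40} as the linear elliptic equation $L^{u^\al_\eps}V^\al_\eps = \al V^\al_\eps - \Tilde{r}_\eps(\cdot,u^\al_\eps(\cdot))$ for the continuous selector $u^\al_\eps$ from Theorem~\ref{T-discount}. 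The drift $b(\cdot,u^\al_\eps(\cdot))$ is locally bounded and the diffusion uniformly elliptic, both independent of $\al$, so a Krylov--Safonov Harnack inequality applied to the non-negative function $V^\al_\eps$ yields $\osc_{\B_R}V^\al_\eps \leq c(R)$ uniformly in $\al$. Standard Calder\'on--Zygmund $\Sob^{2,p}$ estimates, using the linear growth of $H_\eps(x,\cdot)$ and the local $L^\infty$ bound on $\al V^\al_\eps$, then upgrade this to a uniform local $\Sob^{2,p}$ bound on $\bar V^\al_\eps$.

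\textbf{Limit passage, HJB and the bound $\varrho\leq\varrho_\eps$.}
By weak $\Sob^{2,p}$ compactness and Morrey's embedding, extract $\al_n\downarrow 0$ along which $\bar V^{\al_n}_\eps \to V_\eps$ in $\Sobl^{2,p}(\Rd)$ for each $p\geq 1$ and in $\calC^{1,\beta}_{\mathrm{loc}}(\Rd)$, and $\al_n V^{\al_n}_\eps(0) \to \varrho$. The local H\"older regularity of $H_\eps$ (Lemma~\ref{lem-holder}) combined with $\calC^1_{\mathrm{loc}}$ convergence of the gradients allows passage to the limit in \eqref{40}, yielding \eqref{35} almost everywhere; the elliptic bootstrap already used in the proof of Theorem~\ref{T-discount} upgrades this to $V_\eps\in\calC^2(\Rd)$. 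For $\varrho\leq\varrho_\eps$, take the optimal $v^*\in\Usm$ from Theorem~\ref{T-exist}(b) applied with cost $\Tilde{r}_\eps$, so that
\[\al V^\al_\eps(0) \;\leq\; \al\,\Exp^{v^*}_0\Bigl[\int_0^\infty e^{-\al s}\Tilde{r}_\eps(X(s),v^*(X(s)))\,\D{s}\Bigr],\]
and the continuous-time Tauberian analogue of Lemma~\ref{L3.4} together with $T^{-1}\Exp^{v^*}_0[\int_0^T \Tilde{r}_\eps\,\D{s}]\to\varrho_\eps$ gives the desired inequality.

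\textbf{Proof of (a) and (b).}
For (a), the discounted dynamic programming inequality gives, for every $v\in\tilde{\Uadm}_{\mathrm{SM}}$ and every $r>0$,
\[V^\al_\eps(x) \;\leq\; \Exp^v_x\Bigl[\int_0^{\tc_r}e^{-\al s}\Tilde{r}_\eps(X(s),v(X(s)))\,\D{s} + e^{-\al\tc_r}V^\al_\eps(X(\tc_r))\Bigr].\]
Subtracting $V^\al_\eps(0)$, decomposing $e^{-\al\tc_r}V^\al_\eps(X(\tc_r)) - V^\al_\eps(0) = e^{-\al\tc_r}\bar V^\al_\eps(X(\tc_r)) + (e^{-\al\tc_r}-1)V^\al_\eps(0)$, noting that $(1-e^{-\al\tc_r})/\al \to \tc_r$, and using Foster--Lyapunov moment bounds on $\tc_r$ (from Lemma~\ref{lem-stability}) to secure the uniform integrability needed to pass to the limit $\al=\al_n\to 0$ yields
\[V_\eps(x) \;\leq\; \Exp^v_x\Bigl[\int_0^{\tc_r}\bigl(\Tilde{r}_\eps(X(s),v(X(s)))-\varrho\bigr)\,\D{s}\Bigr] + \sup_{\bar\B_r}V_\eps,\]
and taking $\inf_{v}$ followed by $r\downarrow 0$, using continuity of $V_\eps$ and $V_\eps(0)=0$, proves (a). For (b), apply It\^o's formula to $V_\eps(X(\cdot))$ under the Markov control $u_\eps$ on $[0,\tc_r]$ (justified by $V_\eps\in\calC^2$, continuity of $u_\eps$ from Lemma~\ref{lem-cont}, and $\Exp^{u_\eps}_x[\tc_r]<\infty$ from Foster--Lyapunov); since $L^{u_\eps}V_\eps + \Tilde{r}_\eps = \varrho$ by \eqref{35},
\[V_\eps(x) \;=\; \Exp^{u_\eps}_x[V_\eps(X(\tc_r))] + \Exp^{u_\eps}_x\Bigl[\int_0^{\tc_r}(\Tilde{r}_\eps-\varrho)\,\D{s}\Bigr] \;\geq\; -\sup_{\bar\B_r}V_\eps - \varrho\,\Exp^{u_\eps}_x[\tc_r],\]
where we used $\Tilde{r}_\eps\geq 0$ on the integral and $V_\eps(X(\tc_r))\geq -\sup_{\bar\B_r}V_\eps$ on the boundary term.
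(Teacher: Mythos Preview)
Your overall strategy coincides with the paper's: vanishing discount, a Harnack-based oscillation estimate for $\bar V^\al_\eps$ (the paper cites \cite[Lemma~3.6.3]{ari-bor-ghosh} for this step), elliptic bootstrap, and stochastic representations at $\tc_r$ for (a) and (b). Two differences deserve comment.

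For $\varrho\le\varrho_\eps$ you take an optimal $v^*\in\Usm$ and invoke the Abelian direction (Ces\`aro convergence implies Abel convergence). The paper instead applies the Tauberian direction (Lemma~\ref{L3.4}) to an \emph{arbitrary} $U\in\Uadm$, showing that its ergodic cost dominates $\limsup_{\al\to 0}\al V^\al_\eps(x)$. Both are valid; yours is the shorter route here.

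For part (b), however, your direct application of It\^o's formula to $V_\eps$ under $u_\eps$ has a real gap. Dynkin's formula is only available on $[0,\tc_r\wedge\uptau_R]$, and to send $R\to\infty$ you must control $\Exp^{u_\eps}_x\bigl[V_\eps(X(\uptau_R))\,1_{\{\uptau_R<\tc_r\}}\bigr]$ from below. At this point in the argument you have no a~priori lower bound on $V_\eps$ --- indeed (b) is exactly what supplies one. The paper sidesteps this by working instead at the $\al$-level: it writes the discounted identity under the \emph{discounted} optimal selector $u^\al_\eps$,
\[
\bar V^\al_\eps(x)=\Exp^{u^\al_\eps}_x\Bigl[\int_0^{\tc_r}e^{-\al s}(\tilde r_\eps-\varrho)\,\D s\Bigr]
+\Exp^{u^\al_\eps}_x\bigl[\bar V^\al_\eps(X(\tc_r))\bigr]
+\Exp^{u^\al_\eps}_x\Bigl[\al^{-1}(1-e^{-\al\tc_r})(\varrho-\al V^\al_\eps(X(\tc_r)))\Bigr],
\]
where the localization is harmless because $V^\al_\eps\ge 0$ with polynomial upper bound, and then lets $\al\to 0$ using the pointwise convergence $u^\al_\eps\to u_\eps$ (from Lemma~\ref{lem-cont} combined with $\nabla\bar V^\al_\eps\to\nabla V_\eps$) and $\Exp^{u^\al_\eps}_x[\tc_r]\to\Exp^{u_\eps}_x[\tc_r]$. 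Your route can be repaired by first proving, via Foster--Lyapunov applied to a function of the form $-V_\eps+c\Lyap$ (using $L^{u_\eps}V_\eps\le\varrho$ and Lemma~\ref{lem-stability}), that the offending boundary term is bounded below uniformly in $R$; but without that extra step the argument is incomplete.
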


\begin{proof}
From \eqref{33} we obtain that for any $R>0$, 
$$ \al\max_{x\in\B_R}V^\al_\eps(x) <\kap_R,$$
where the constant $\kap_R$ does not depend on $\al$ and $\eps$. Hence applying 
\cite[Lemma~3.6.3]{ari-bor-ghosh} (see also \cite[Lemma~3.5]{ari-bis-pang})
we get that $\bar{V}^\al_\eps$ is bounded in $\Sobl^{2, p}(\Rd), \, p\geq 1$. Also  
boundedness $\{\al V^\al_\eps(0)\}_{\al\in(0, 1)}$ follows from the above display. For $p>2d$, we see that
any sub-sequential limit also converges in $\calC^{1, r}_{\mathrm{loc}}(\Rd)$. Hence any sub-sequential limit $(V_\eps, \varrho)$ would satisfy
\eqref{35}. We can improve regularity of $V_\eps$ to $\calC^2$ using Lemma~\ref{lem-holder}. Now we show that
$\varrho\leq \varrho_\eps$. Let $U\in\Uadm$ be any admissible control. We also assume that
$$\limsup_{T\to\infty} \frac{1}{T}\Exp^U_x\Bigl[\int_0^T \Tilde{r}_\eps(X(s), U(s))\D{s}\Bigr]<\infty,$$
for some $x$. It is easy to see that
$$\limsup_{T\to\infty} \frac{1}{T}\Exp^U_x\Bigl[\int_0^T \Tilde{r}_\eps(X(s), U(s))\D{s}\Bigr]
=\limsup_{N\to\infty} \frac{1}{N}\Exp^U_x\Bigl[\int_0^N \Tilde{r}_\eps(X(s), U(s))\D{s}\Bigr],$$
where $N$ runs over natural numbers. In fact, it is easy to see that for RHS of above display is smaller that LHS, and for any $T_n\to\infty$, we have
\begin{align*}
\limsup_{T_n\to\infty} \frac{1}{T_n}\Exp^U_x\Bigl[\int_0^{T_n} \Tilde{r}_\eps(X(s), U(s))\D{s}\Bigr]
&=\limsup_{T_n\to\infty} \frac{\lfloor T_n+1\rfloor}{\lfloor T_n\rfloor}\, \frac{1}{\lfloor T_n+1\rfloor}
\Exp^U_x\Bigl[\int_0^{\lfloor T_n+1\rfloor} \Tilde{r}_\eps(X(s), U(s))\D{s}\Bigr]
\\
&\leq \limsup_{N\to\infty} \frac{1}{N}\Exp^U_x\Bigl[\int_0^N \Tilde{r}_\eps(X(s), U(s))\D{s}\Bigr].
\end{align*}
Define
$$a_N \; \df\; \Exp^U_x\Bigl[\int_{N-1}^N \Tilde{r}_\eps(X(s), U(s))\D{s}\Bigr], \quad N\geq 1.$$
For $\beta=e^{-\al}$, we have
\begin{align*}
\sum_{N=1}^\infty\beta^N a_N &\geq e^{-\al} \sum_{N=1}^\infty\Exp_x^U\Bigl[\int_{N-1}^Ne^{-\al\, s} \Tilde{r}_\eps(X(s), U(s))\D{s}\Bigr]
\\[5pt]
&=e^{-\al}\Exp_x^U\Bigl[\int_0^\infty e^{-\al\, s} \Tilde{r}_\eps(X(s), U(s))\D{s}\Bigr].
\end{align*}
Therefore applying \cite[Theorem~2.2]{Sz-Fil} we get
\begin{align}\label{36}
\limsup_{T\to\infty} \frac{1}{T}\Exp^U_x\Bigl[\int_0^T \Tilde{r}_\eps(X(s), U(s))\D{s}\Bigr]
&\geq \limsup_{\al\to 0} (1-\beta)\sum_{N=1}^\infty\beta^N a_N\nonumber
\\[5pt]
& \geq \limsup_{\al\to 0} (1-e^{-\al})e^{-\al} V^\eps_\al(x).
\end{align}
Since $\lim_{\al\to 0}\frac{1-e^{-\al}}{\al}=1$ we obtain from \eqref{36} that
$$\limsup_{T\to\infty} \frac{1}{T}\Exp^U_x\Bigl[\int_0^T \Tilde{r}_\eps(X(s), U(s))\D{s}\Bigr]\geq \varrho,$$
where we use the fact that for any $x\in\Rd$, $\al(V^\al_\eps(x)-V^\al_\eps(0))\to 0$ as $\al\to 0$. Since $U\in\Uadm$
is arbitrary we have $\varrho_{\eps}\geq \varrho$. (a) follows from the same argument as in \cite[Lemma~3.7.8]{ari-bor-ghosh}.
In fact, following \cite[Lemma~3.7.8]{ari-bor-ghosh} one obtains that for any $r>0$,
\begin{equation}\label{42}
V_\eps(x) \leq \; \inf_{v\in\tilde{\Uadm}_{\mathrm{SM}}}\Exp_x^v\Big[\int_0^{\tc_r}(\Tilde{r}_\eps(X(s), v(X(s)))-\varrho)\D{s}
+ V_\eps(X(\tc_r))\Big].
\end{equation}
Let $u^\al_\eps$ be the minimizing selector of \eqref{40}. Then applying Lemma~\ref{lem-cont} we get that
$u^\al_\eps\to u_\eps$ pointwise, as $\al\to 0$. Also by Theorem~\ref{T-discount} we have
\begin{align*}
\bar{V}^\al_\eps(x) & \;= \Exp_x^{u^\al_\eps}\Big[\int_0^{\tc_r} e^{-\al s}\bigl(\tilde{r}_\eps(X(s), u^\al_\eps(X(s))) -\varrho\bigr)\D{s}\Big]
+ \Exp_x^{u^\al_\eps}\Big[ \bar{V}^\al_\eps(X(\tc_r))\Big]
\\
&\, \quad \hspace{.2in} + \Exp_x^{u^\al_\eps}\Big[ \al^{-1} (1-e^{-\al\tc_r}) (\varrho-\al V^\al_\eps(X(\tc_r))\Big],
\quad \forall\, x\in\B_r^c.
\end{align*}
Since $u^\al_\eps\to u_\eps$, using Lemma~\ref{lem-stability} we have (see also \cite[Lemma~3.8]{ari-bis-pang})
$$\Exp_x^{u^\al_\eps}[\tc_r]\xrightarrow[\al\to 0]{}\Exp_x^{u_\eps}[\tc_r], \quad \text{for all}\, \, x\in\B^c_r.$$
Since $(1-e^{-\al s})\leq \al s,$ for $s\geq 0$, combining above two display we get, as $\al\to 0$, that
$$ V_\eps(x) \geq -\varrho \Exp_x^{u_\eps}[\tc_r]- \sup_{y\in\B_r} V_\eps(y), \quad \text{for}\; x\in\B_r^c.$$
\end{proof}

The following lemma establishes optimality of $u_\eps$ that we choose above.
\begin{lemma}\label{L4.6}
Every sub-sequential limit $V_\eps$ that we obtain in Lemma~\ref{L4.5} is in $\sorder(\abs{x}^k)$ for $k>m,$ where
$m$ is given by \eqref{cost1}. Also if $u_\eps$ is the minimizing selector in \eqref{35} we have
\begin{equation}\label{43}
\varrho_{\eps}=\varrho=\limsup_{T\to\infty}\, \frac{1}{T}\Exp_x\Bigl[\int_0^T\Tilde{r}_\eps(X(s), u_\eps(X(s)))\D{s}\Bigr],
\quad \forall\; x\in\Rd.
\end{equation}
\end{lemma}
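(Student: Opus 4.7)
I would handle the two assertions separately, both resting on combining the Foster--Lyapunov estimate of Lemma~\ref{lem-stability} with the one-sided bounds from Lemma~\ref{L4.5} and on applying It\^o's formula to $V_\eps$ along the minimizer $u_\eps$. The delicate point is that different Lyapunov exponents are needed for the two steps, and the uniformity of Lemma~\ref{lem-stability} over $u\in\M(x)$ is what makes the strategy coherent.

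\emph{Growth of $V_\eps$.} Fix $k>m$ and pick any $k'\in(m,k)$. Since $0\in\M(x)$ for every $x$, the trivial control $v\equiv 0$ is admissible, and Lemma~\ref{L4.5}(a) gives
\begin{equation*}
V_\eps(x)\;\leq\;\Exp^0_x\Bigl[\int_0^{\tc_r}\bigl(\Tilde{r}_\eps(X(s),0)-\varrho\bigr)\D{s}\Bigr].
\end{equation*}
Since $\Tilde{r}_\eps(x,0)=r(x^+)\leq c_2(1+\abs{x}^m)\leq c_2(2+\abs{x}^{k'})$, it is enough to estimate $\Exp^0_x[\tc_r]$ and $\Exp^0_x\bigl[\int_0^{\tc_r}\abs{X(s)}^{k'}\D{s}\bigr]$. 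I would apply Lemma~\ref{lem-stability} with exponent $k'$, so that $L^0\Lyap\leq c_5-c_6\abs{x}^{k'}$ for $\Lyap(y)=\abs{y}^{k'}$ outside the unit ball, and use It\^o's formula up to $\tc_r\wedge T$. Together with $\abs{X(s)}^{k'}\geq r^{k'}$ for $s<\tc_r$ and the choice $c_6r^{k'}>c_5$, this yields after passing $T\to\infty$ by monotone convergence
\begin{equation*}
\Exp^0_x[\tc_r]\;+\;\Exp^0_x\Bigl[\int_0^{\tc_r}\abs{X(s)}^{k'}\D{s}\Bigr]\;\leq\;\kap(1+\abs{x}^{k'}).
\end{equation*}
Hence $V_\eps(x)=O(\abs{x}^{k'})$, and since $k'<k$ this gives $V_\eps\in\sorder(\abs{x}^k)$.

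\emph{Optimality of $u_\eps$.} By Lemma~\ref{L4.5}, $\varrho\leq\varrho_\eps$, so it suffices to show $J_\eps(x,u_\eps)=\varrho$ and invoke admissibility of $u_\eps$ to obtain $\varrho_\eps\leq\varrho$. Applying It\^o's formula to $V_\eps\in\calC^2(\Rd)$ under $u_\eps$, localized at $T\wedge\uptau_R$, and using the pointwise HJB identity $L^{u_\eps}V_\eps(x)+\Tilde{r}_\eps(x,u_\eps(x))=\varrho$ from \eqref{35}, gives
\begin{equation*}
\Exp^{u_\eps}_x\bigl[V_\eps(X(T\wedge\uptau_R))\bigr]-V_\eps(x)\;=\;\Exp^{u_\eps}_x\Bigl[\int_0^{T\wedge\uptau_R}\bigl(\varrho-\Tilde{r}_\eps(X(s),u_\eps(X(s)))\bigr)\D{s}\Bigr].
\end{equation*}
Letting $R\to\infty$ uses monotone convergence on the right (since $\Tilde{r}_\eps\geq 0$) and uniform integrability on the left (since $\abs{V_\eps(y)}\leq\kap(1+\abs{y}^{k'})$ from the previous step while $\Exp^{u_\eps}_x[\sup_{t\leq T}\abs{X(t)}^{k'}]<\infty$ by standard SDE moment bounds). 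Rearranging,
\begin{equation*}
\frac{1}{T}\,\Exp^{u_\eps}_x\Bigl[\int_0^T\Tilde{r}_\eps(X(s),u_\eps(X(s)))\D{s}\Bigr]\;=\;\varrho+\frac{V_\eps(x)-\Exp^{u_\eps}_x[V_\eps(X(T))]}{T}.
\end{equation*}
To close the argument, I would show $\Exp^{u_\eps}_x[V_\eps(X(T))]/T\to 0$. Setting $\phi(t):=\Exp^{u_\eps}_x[\Lyap(X(t))]$ with $\Lyap(y)=\abs{y}^k$ outside the unit ball for $k>k'$, the uniform Foster--Lyapunov bound of Lemma~\ref{lem-stability} along $u_\eps(X(t))\in\M(X(t))$, together with $\abs{y}^k\geq\Lyap(y)-C$, produces an integral inequality $\phi(t)\leq\phi(0)+(c_5+c_6C)t-c_6\int_0^t\phi(s)\D{s}$, whence Gronwall-type comparison yields $\sup_{t\geq 0}\phi(t)<\infty$. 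Combined with $V_\eps\in\sorder(\abs{x}^k)$, this gives $\sup_{t\geq 0}\Exp^{u_\eps}_x[V_\eps(X(t))]<\infty$, so the boundary term is $O(1/T)$. Taking limsup produces \eqref{43}, and admissibility of $u_\eps$ gives $\varrho_\eps\leq\varrho$ and hence the equality $\varrho=\varrho_\eps$.

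\emph{Main obstacle.} The principal technical point is the coordination between two Lyapunov exponents: a smaller exponent $k'\in(m,k)$ is required to obtain the sub-polynomial growth bound on $V_\eps$ via Lemma~\ref{L4.5}(a), while a strictly larger exponent $k>k'$ is required to produce uniformly bounded $k'$-moments of $X(t)$ under $u_\eps$. Justifying the passage $R\to\infty$ in the localized It\^o identity, which requires both estimates simultaneously, is the most delicate step; what makes this feasible is precisely that the inequality of Lemma~\ref{lem-stability} is \emph{uniform} in $u\in\M(x)$, so the same Lyapunov function controls the dynamics both under $v\equiv 0$ (used for growth of $V_\eps$) and under $u_\eps$ (used for the boundary term).
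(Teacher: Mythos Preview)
Your overall strategy aligns with the paper's proof. Both combine the two one-sided bounds of Lemma~\ref{L4.5} with the uniform Foster--Lyapunov estimate of Lemma~\ref{lem-stability} to control the growth of $V_\eps$, and then apply Dynkin's formula to the HJB equation under $u_\eps$ to identify $\varrho$ with the ergodic cost. One difference worth noting: the paper disposes of the boundary term $\frac{1}{T}\Exp^{u_\eps}_x[V_\eps(X(T))]$ by first showing that the invariant measure $\mu_{u_\eps}$ has all polynomial moments and then citing \cite[Proposition~2.6]{ichihara-sheu}, whereas you argue directly that $\sup_{t\ge 0}\Exp^{u_\eps}_x[\Lyap(X(t))]<\infty$. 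Your route is more self-contained, but be aware that the integral inequality $\phi(t)\leq\phi(0)+Ct-c_6\int_0^t\phi(s)\,\D{s}$ by itself does not yield $\sup_t\phi(t)<\infty$ via a textbook Gronwall step (it bounds $\int_0^t\phi$ from above, not from below, so it only gives $\phi(t)\leq \phi(0)+Ct$); the clean way to get the supremum bound is to apply It\^o's formula to $e^{c_6 t}\Lyap(X(t))$ directly, using $L^u\Lyap\leq (c_5+c_6C)-c_6\Lyap$ and Fatou.

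There is, however, a genuine omission in your growth argument. You invoke only Lemma~\ref{L4.5}(a) (with the single control $v\equiv 0$) and then assert the two-sided estimate ``$V_\eps(x)=O(\abs{x}^{k'})$''. But (a) gives only the upper bound $V_\eps(x)\leq O(\abs{x}^{k'})$; since $V_\eps$ is obtained by subtracting $V^\al_\eps(0)$, nothing prevents $V_\eps$ from being very negative. You need Lemma~\ref{L4.5}(b) as well, which furnishes $V_\eps(x)\geq -\varrho\,\Exp_x^{u_\eps}[\tc_r]-\sup_{\B_r}V_\eps$, and then the same Lyapunov computation (run under $u_\eps$ rather than $0$, using that the bound in Lemma~\ref{lem-stability} is uniform over $u\in\M(x)$) controls $\Exp_x^{u_\eps}[\tc_r]$. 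This is exactly how the paper proceeds: it merges (a) and (b) into the single two-sided estimate \eqref{41} before appealing to the Lyapunov bound. Without the lower half, your later use of $\abs{V_\eps(y)}\leq\kap(1+\abs{y}^{k'})$ to justify uniform integrability in the $R\to\infty$ passage is unsupported.
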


\begin{proof}
From Lemma~\ref{L4.5} (see \eqref{42}) we obtain  that
\begin{equation}\label{41}
 \abs{V_\eps(x)}\; \leq \; \sup_{v\in\tilde{\Uadm}_{\mathrm{SM}}}\Exp_x^v\Bigl[\int_0^{\tc_r}
\big(\tilde{r}_\eps(X(s), v(X(s))) + \varrho_{*}\big)\D{s}\Bigr] + \sup_{B_r} V_\eps, \quad \forall\; x\in\B_r^c.
\end{equation}
 For $v\in\Tilde{\Uadm}_{\rm SM}$, we define
$$L^{v}f(x) \;=\; \frac{1}{2}\sum_{i, j=1}^d a_{ij}\partial_{ij} f(x) + b(x, v(x))\cdot\grad f(x), \quad f\in\calC^2(\Rd).$$
Considering $k=m$ in Lemma~\ref{lem-stability} and applying Dynkin's formula we obtain, for $v\in\tilde{\Uadm}_{\mathrm{SM}}$,
that
\begin{align*}
\Exp^v_x[\Lyap(X(t\wedge\tc_r))]-\Lyap(x) & = \Exp^v_x\Big[ \int_0^{t\wedge\tc_r} L^v\Lyap(X(s))\D{s}\Big]
\\[5pt]
&= \Exp^v_x\Big[ \int_0^{t\wedge\tc_r}1_{\{v(X(s))\in\M(X(s))\}} L^v\Lyap(X(s))\D{s}\Big]
\\[5pt]
&\leq  \Exp^v_x\Big[ \int_0^{t\wedge\tc_r}\Big(c_5- c_6\abs{X(s)}^m\Big)\D{s}\Big].
\end{align*}
Now $\Lyap$ being non-negative, letting $t\to\infty$, in the above display we obtain
$$\Exp^v_x\Big[ \int_0^{\tc_r}\Bigl(\abs{X(s)}^m-\frac{c_5}{c_6}\Bigr)\D{s}\Big]\leq \frac{1}{c_6}\Lyap(x), \quad \text{for}\; \; x\in\B^c_r.$$
Choose $r\geq$ large enough so that $\abs{x}\geq \frac{1}{2}\vee\frac{4c_5}{c_6}$ for $x\in\B_r^c$.
Therefore
$$\Exp^v_x\Big[ \int_0^{\tc_r}\frac{1}{4}(1+\abs{X(s)}^m)\, \D{s}\Big]\leq
\Exp^v_x\Big[ \int_0^{\tc_r}\frac{1}{2}\abs{X(s)}^m\, \D{s}\Big]\leq \frac{1}{c_6}\Lyap(x), \quad \text{for}\; \; x\in\B^c_r.$$
Thus \eqref{cost1} and \eqref{41} gives that $V_\eps\in\order(\abs{x}^m)$. Hence $V_\eps\in\sorder(\abs{x}^k), k>m$.

Now let $u_\eps$ be the minimizing selector of \eqref{35}. We observe that $u_\eps\in\tilde{\Uadm}_{\rm SM}$. Moreover, 
using Lemma~\ref{lem-stability} we see that $u_\eps$ is stable with 
$$\limsup_{T\to\infty}\, \frac{1}{T}\Exp^{u_\eps}_x\Big[\int_0^T\abs{X(t)}^k\, \D{t}\Big]\, <\; \infty,$$
for any $k\geq 1$. Thus if $\mu_{u_\eps}$ denote the invariant measure corresponding to the Markov control $u_\eps$
we have 
$$\int_{\Rd} \abs{x}^k \mu_{u_\eps}(\D{x}) \; <\; \infty, \quad \text{for}\; k\geq 1.$$
Since $V_\eps\in\sorder(\abs{x}^k), k>m$, it follows from \cite[Proposition~2.6]{ichihara-sheu} 
that 
$$\lim_{T\to\infty}\frac{1}{T}\Exp^{u_\eps}_x\Big[\abs{V_\eps(X(T))}\Big]=0, \quad \text{for}\quad x\in\Rd.$$
Thus \eqref{43} follows by an application of Dynkin's formula to \eqref{35}.
\end{proof}

To this end we define 
$$ H(p, x)\; \df\; \inf_{u\in\M(x)}\{p\cdot b(x, u) + \Tilde{r}(x, u)\}.$$
the following result is similar to Lemma~\ref{lem-holder}.

\begin{lemma}\label{lem-holder-0}
Let $\calK\subset\Rd$ be compact. Then for any $R>0$, there exists a constant $c=c_{\calK, R}$, 
depending on $\calK, R$ , such that
$$\Big|H(x, p)- H(y, q) \Big|\; \leq \; c\, (\abs{x-y}^{\nicefrac{1}{2}} + \abs{p-q}), \quad \text{for all}\quad x,\, y\in\calK,
\; \text{and}, \; p, \, q\in\B_{R}(0).$$ 
\end{lemma}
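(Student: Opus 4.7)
The plan is to mirror the strategy of Lemma~\ref{lem-holder} in a much simplified form: without the strictly convex perturbation $\eps\bvnorm{u}$, both $b(x,u)$ and $\Tilde{r}(x,u)$ depend on $u$ only through the bilinear products $u_{ij}x_j^-$ (write $b_i(x,u) = \ell_i + \mu_i x_i^- - \gamma_i x_i^+ + \sum_{j\neq i}(\gamma_i - \mu_{ij}) u_{ij} x_j^-$ and $\Tilde{r}(x,u) = r\bigl((x_i^+ - \sum_{j\neq i} u_{ij}x_j^-)^+\bigr)_i$), so the map $F(x,u)\df q\cdot b(x,u) + \Tilde{r}(x,u)$ is jointly Lipschitz in $x$ and in the quantities $\{u_{ij} x_j^-\}$ on bounded sets. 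As in Lemma~\ref{lem-holder}, it suffices to prove $H(x,q) - H(y,q)\le c\abs{x-y}^{1/2}$ for $x,y\in\calK$ and $q\in\B_R(0)$; the $\abs{p-q}$ piece follows from local boundedness of $b$ and $\Tilde{r}$.

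I would pick any minimizer $u\in\M(y)$ of $u'\mapsto q\cdot b(y,u')+\Tilde{r}(y,u')$ on $\M(y)$; uniqueness, which was crucial in Lemma~\ref{lem-holder}, is unavailable but not needed here. The plan is then to construct $\bar{u}\in\M(x)$ by a row-wise contraction: set
\begin{equation*}
\beta_i\;\df\;\min\Bigl\{1,\,\frac{x_i^+}{y_i^+ + (d-1)\abs{x-y}}\Bigr\},\qquad \bar{u}_{ij}\;\df\;\beta_i u_{ij},\qquad \bar{u}_{ii}\;\df\;0.
\end{equation*}
The column constraint $\sum_{k\neq i}\bar{u}_{ki}\le\sum_{k\neq i}u_{ki}\le 1$ is automatic, and the bound $\sum_{j\neq i}u_{ij}x_j^-\le\sum_{j\neq i}u_{ij}(y_j^- + \abs{x-y})\le y_i^+ + (d-1)\abs{x-y}$ (using $u_{ij}\le 1$ and $u\in\M(y)$) yields $\sum_{j\neq i}\bar{u}_{ij}x_j^-\le\beta_i(y_i^+ + (d-1)\abs{x-y})\le x_i^+$, so $\bar{u}\in\M(x)$.

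For the product estimate, decompose $\abs{u_{ij}y_j^- - \beta_i u_{ij}x_j^-}\le u_{ij}\abs{y_j^- - x_j^-} + (1-\beta_i)u_{ij}x_j^-$. The first term sums (over all $i\neq j$) to $O(\abs{x-y})$. For the second, in the row sum $(1-\beta_i)\sum_j u_{ij}x_j^-\le \max\{0,\,y_i^+ + (d-1)\abs{x-y} - x_i^+\}\le d\abs{x-y}$, which again sums to $O(\abs{x-y})$ over $i$. By the joint Lipschitz property of $F$ noted above, this upgrades to $F(x,\bar{u}) - F(y,u)\le c\abs{x-y}$, and hence to $H(x,q) - H(y,q)\le c\abs{x-y}$, which is sharper than the claimed $\abs{x-y}^{1/2}$ on the compact set $\calK$.

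The only subtlety in the verification $\bar{u}\in\M(x)$ is the regime where $y_i^+$ is very small: the additive buffer $(d-1)\abs{x-y}$ in the denominator of $\beta_i$ is what simultaneously absorbs the perturbation of each $x_j^-$ from $y_j^-$ in the row sum and a possible decrease from $y_i^+$ to $x_i^+$. No classification of indices by the size of $u_{ij}$ is needed — in sharp contrast to Lemma~\ref{lem-holder}, where the $\eps\bvnorm{u}$ term depended on $u$ directly and forced the delicate $\abs{x-y}^{1/4}$-level argument to control $\abs{u_{ij}}^2 - \abs{\bar{u}_{ij}}^2$ as well.
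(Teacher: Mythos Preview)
Your proof is correct and in fact sharper than the paper's: you obtain local Lipschitz continuity of $H$ in $x$, not just $\tfrac{1}{2}$-H\"older. The two arguments differ in how they transport a minimizer $u\in\M(y)$ to some $\bar u\in\M(x)$. The paper mimics the mechanism of Lemma~\ref{lem-holder}, classifying indices according to the size of $u_{ij}$ relative to $\theta\abs{x-y}^{1/2}$ and subtracting $\theta\abs{x-y}^{1/2}$ from the large entries; this additive shift can only guarantee $\abs{u_{ij}y_j^- - \bar u_{ij}x_j^-}\le\theta_1\abs{x-y}^{1/2}$ entrywise. Your row-wise contraction $\bar u_{ij}=\beta_i u_{ij}$, with $\beta_i=\min\{1,\,x_i^+/(y_i^+ + (d-1)\abs{x-y})\}$, is both simpler and more efficient: it yields $\sum_{j\ne i}\abs{u_{ij}y_j^- - \bar u_{ij}x_j^-}\le C\abs{x-y}$, which is all that is needed because --- as you correctly observe --- both $b(x,u)$ and $\tilde r(x,u)$ depend on $u$ only through the products $u_{ij}x_j^-$. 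The paper's approach is a residual of the $\eps>0$ case, where the extra term $\eps\bvnorm{u}$ depends on $u_{ij}$ directly (not through $u_{ij}x_j^-$), forcing the finer index analysis and the loss of an exponent; once $\eps=0$, that machinery is unnecessary, and your multiplicative scaling is the natural device.
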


\begin{proof}
We note that the proof of Lemma~\ref{lem-holder} is not applicable here to conclude the result. In fact, the constant $\theta$
defined in Lemma~\ref{lem-holder} tends to infinity as $\eps\to 0$. However, we adopt a similar technique to establish
H\"{o}lder regularity of the Hamiltonian. In view of Lemma~\ref{lem-holder}, we see that it is enough to show the
following: there
exists $\theta_1$, depending on $\calK, R$, such that for any $u\in\M(y)$ we could find $\bar{u}\in\M(x)$ such that
\begin{equation}\label{46}
|u_{ij}y_j^--\bar{u}_{ij}x_j^-|\leq \theta_1\, \abs{x-y}^{\nicefrac{1}{2}}, \quad \text{for all}\;\; i\neq j.
\end{equation}
and $x, y\in\calK$. Let $u\in\M(y)$. Let $\theta$ be a positive number that will be chosen later. Define
$$\cA(\theta)\;\df\; \Big\{i\in\{1, \ldots, d\}\; :\; \sum_{j: u_{ij}>\theta\abs{x-y}^{1/2}} y_j^-\leq \abs{x-y}^{1/2}\Big\}.$$
We define $\bar{u}\in\M$ as follows,
\[\bar{u}_{ij}=\left\{
\begin{array}{lll}
0, & \text{if} \; i\in\cA(\theta),
\\[5pt]
0, & \text{if} \; i\in\cA^c(\theta),\; \text{and}, \; u_{ij}\leq \theta\abs{x-y}^{1/2},
\\[5pt]
u_{ij} - \theta\abs{x-y}^{1/2}, & \text{otherwise}.
\end{array}
\right.
\]
It is easy to see that \eqref{46} is satisfied with the above choice of $\bar{u}$. Thus it remains to show that
$\bar{u}\in\M(x)$. It is enough to show that for $i\in\cA^c(\theta)$, we have
\begin{equation}\label{47}
\sum_{j:j\neq i} \bar{u}_{ij} x_j^-\;\leq\; x_i^+.
\end{equation}
Now for $i\in\cA^c(\theta)$,
\begin{align*}
\sum_{j:j\neq i} \bar{u}_{ij} x_j^- &\;\leq\; \sum_{u_{ij}>\theta\abs{x-y}^{1/2}}(u_{ij}-\theta\abs{x-y}^{1/2})(y_j^- + \abs{x-y})
\\[5pt]
&\;\leq\; \sum_{u_{ij}>\theta\abs{x-y}^{1/2}}u_{ij}y_j^- + \abs{x-y}\sum_{j: j\neq i} u_{ij}
-\theta\abs{x-y}^{1/2}\sum_{u_{ij}>\theta\abs{x-y}^{1/2}}y_j^-
\\[5pt]
&\;\leq\; y_i^+ + \abs{x-y}\sum_{j: j\neq i} u_{ij}
-\theta\abs{x-y}^{1/2}\sum_{u_{ij}>\theta\abs{x-y}^{1/2}}y_j^-
\\[5pt]
&\;\leq\; x_i^+ + d\, \abs{x-y}
-\theta\abs{x-y}.
\end{align*}
Thus choosing $\theta>d$ we see that \eqref{47} holds for $i\in\cA^c(\theta)$ and $\bar{u}\in\M(x)$.
\end{proof}

Now we are ready to prove Thereom~\ref{T-HJB}.
\begin{proof}[Proof of Theorem~\ref{T-HJB}]
Existence of solution $V_\eps\in\calC^2$ and optimality of $\varrho_\eps$ follows form Lemma~\ref{L4.6}. Uniqueness
of $(V_\eps, \varrho_\eps)$ can be obtained following the same arguments as in \cite[Theorem~3.7.12(iii)]{ari-bor-ghosh}.
Hence (i) and (ii) follows.
Now we argue that for any $r>0$, 
\begin{equation}\label{48}
\sup_{\eps\in(0, 1)}\sup_{\B_r}\abs{V_\eps}\; <\; \infty.
\end{equation}
We recall that $V_\eps$ is obtained as a limit of $\bar{V}^\al_\eps=V^\al_\eps-V^\al_\eps(0)$ where $V^\al_\eps$ is
given by \eqref{20}. From \cite[Lemma~3.5]{ari-bis-pang} (see also \cite[Lemma~3.6.3]{ari-bor-ghosh}) one obtains
that $\sup_{\eps\in(0, 1)}\sup_{x\in\B_r}\abs{\bar{V}^\al_\eps(x)}\;<\infty$. This gives us \eqref{48}. Therefore combining
\eqref{48}, \eqref{41} and the arguments in Lemma~\ref{L4.6} we find a constant $\kap$ such that
$$\sup_{\eps\in(0, 1)}\abs{V_\eps(x)}\; \leq \kap(1+\abs{x}^m),\quad \forall\; x\in\Rd.$$ 
Hence applying the theory of elliptic PDE's we see that the family $\{V_\eps\}$ bounded in $\Sobl^{2, p}(\Rd)$, for $p\geq d$. 
Since $\Sobl^{2, p}(\Rd), p>2d,$ is compactly embedded in $\calC^{1, \beta}_{\mathrm{loc}}(\Rd), \beta\in(0, 1/2)$,
we obtain $\{V_\eps\}$ bounded in $\calC^{1, \beta}_{\mathrm{loc}}(\Rd), \beta\in(0, 1/2)$. Thus we have
$V\in \Sobl^{2, p}(\Rd)\cap \calC^{1, \beta}_{\mathrm{loc}}(\Rd)$ with $p>2d$, and $\beta\in(0, 1/2)$, such that
$V_\eps\to V$ in $\Sobl^{2, p}(\Rd)\cap \calC^{1, \beta}_{\mathrm{loc}}(\Rd)$ along some sub-sequence of
$\eps\to 0$. Letting $\eps\to 0$ in \eqref{eps-HJB} we see that $(V, \varrho)$ satisfies \eqref{HJB}. Using 
Lemma~\ref{lem-holder-0} and regularity property of non-degenerate elliptic  operator we get $V\in\calC^2(\Rd)$.

(iv): Therefore there exists a classical solution $(V, \varrho_{*})$ to \eqref{HJB}. We now show that there exists a measurable minimizing selector
of \eqref{HJB}. We first show that the map $\chi:\Rd\to 2^{\M}$, defined as $\chi(x)=\M(x)$, is measurable. To check measurability we
need to show that for any closed $F\subset \M$,
$$\chi^{\ell}(F)\;\df\;\{x\in\Rd\; :\; \chi(x)\cap F\neq \emptyset\},$$
is a Borel set (see \cite[pp. 557]{ali-bor}). $\chi^\ell$ is referred to as the \textit{lower-inverse} of $\chi$. In fact, we show that $\chi^{\ell}(F)$
is a closed set whenever $F$ is closed. Let $x_n\to x$, as $n\to\infty$, for a sequence $\{x_n\}\subset\chi^\ell(F)$. Then there exists $u_n$ such that
$u_n\in\chi(x_n)\cap F$ and $u_n\in\M(x_n)$. Now $\M$ being compact there exists $u\in F$ satisfying $u_n\to u$ as $n\to\infty$. It is easy to
check that $u\in\M(x)$. Thus $\chi(x)\cap F\neq\emptyset$ implying $x\in\chi^\ell(F)$. This shows that $\chi^\ell(F)$ is a Borel set.
Thus $\chi$ is a measurable correspondence \cite[Chapter~18]{ali-bor}. Since $\chi$ is a compact set-valued map it is a weakly-measurable correspondence \cite[Theorem~18.10]{ali-bor}.
Hence from Filippov's implicit function theorem \cite[Theorem~18.17]{ali-bor} we obtain that there exists a measurable selector $u:\Rd\to\M$
such that $u(x)\in\M(x)$ for all $x$, and
\begin{equation}\label{100}
\frac{1}{2}\sum_{i, j=1}^da_{ij}\partial_{ij} V(x) + b(x, u(x))\cdot \grad V(x) + \tilde{r}(x, u(x)) = \varrho_{*}.
\end{equation}
On the other hand, we have $V\in\sorder(\abs{x}^k), k>m$. Therefore applying Dynkin's formula to \eqref{100} with a similar argument as in Lemma~\ref{L4.6} we obtain
that $u$ is optimal for $\varrho_{*}$.

(iii) and (v): We have already shown above that $V_\eps\to V$, along some subsequence as $\eps\to 0$, in $\Sobl^{2, p}(\Rd), \, p\geq 1$. Therefore
to get the convergence of full sequence it is enough to establish the uniqueness of the limit. Form Lemma~\ref{L4.5}(a) and \eqref{42} we observe that
$$V(x) \leq \liminf_{r\downarrow 0} \inf_{v\in\Tilde{\Uadm}_{\rm SM}} 
\Exp^v_x\Bigl[\int_0^{\tc_r}\bigl(\Tilde{r}(X(s), v(X(s))) -\varrho_{*}\bigr)\D{s}\Bigr].$$
Now uniqueness can be obtained following a similar argument as in \cite[Theorem~3.7.12(iii)]{ari-bor-ghosh}.
\end{proof}

\section{Asymptotic Optimality}\label{S-optimality}

The goal of this section is to prove Theorem~\ref{T-optimality}. In Theorem~\ref{T-lower} below we show that $\varrho_{*}$ is an
asymptotic lower bound for the value functions $\widehat{V}^n(\Hat{X}^n(0))$ as $n\to\infty$. Then using Theorem~\ref{T-HJB}
we construct a sequence of
 admissible policies for the queueing systems and show in Theorem~\ref{T-upper} that the admissible policies are
$\eps$-optimal for the value functions as $n\to\infty$.

Recall the diffusion scaled process $\Hat{X}^n, \Hat{Z}^n$ and $\Hat{Q}^n$ from \eqref{dc1} and
their relation \eqref{dc2}
\begin{align}\label{60}
\Hat{X}^{n}_{i}(t) &\;=\;\Hat{X}^{n}_{i}(0) +\ell_{i}^{n} t 
+ \mu_{i}^{n} \int_{0}^{t} (\Hat{X}_{i}^{n})^-(s)\,\D{s}-\sum_{j:j\neq i}\mu^n_{ij}\int_0^t\Hat{Z}^n_{ij}(s)\, \D{s}
- \gamma_{i}^{n} \int_{0}^{t} \Hat{Q}^{n}_{i}(s)\,\D{s}\\[5pt]
&\mspace{20mu} + \Hat{M}_{A,i}^{n}(t) - \Hat{M}_{S,i}^{n}(t)-\sum_{j:j\neq i} \Hat{M}_{S, ij}^n(t)
- \Hat{M}_{R,i}^{n}(t)\,,\nonumber
\end{align}
where $\ell^{n}= (\ell_{1}^{n},\dotsc,\ell_{d}^{n})\transp$ is defined as
\begin{equation*}
\ell_{i}^{n} \;\df\;\frac{1}{\sqrt{n}}(\lambda_{i}^{n}
- \mu_{i}^{n} n)= \frac{\lam^n_i-n\lam_i}{\sqrt{n}}-\sqrt{n}(\mu^n_i-\mu_i)\,,
\end{equation*}
and (see \eqref{dc3})
\begin{equation*}
\begin{split}
\Hat{M}_{A,i}^{n}(t) &\;\df\;\frac{1}{\sqrt{n}}(A_{i}^{n}(\lambda^{n}_{i} t )
- \lambda^{n}_{i} t)\,,\\[5pt]
\Hat{M}_{S,i}^{n}(t) &\;\df\;\frac{1}{\sqrt{n}} 
\left( S_{i}^{n}\left(\mu_{i}^{n} \int_{0}^{t} Z_{i}^{n}(s)\,\D{s}\right) 
- \mu_{i}^{n} \int_{0}^{t} Z_{i}^{n}(s)\,\D{s} \right)\,,\\[5pt]
\Hat{M}_{S,ij}^{n}(t) &\;\df\;\frac{1}{\sqrt{n}} 
\left( S_{ij}^{n}\left(\mu_{i}^{n} \int_{0}^{t} Z_{ij}^{n}(s)\,\D{s}\right) 
- \mu_{i}^{n} \int_{0}^{t} Z_{ij}^{n}(s)\,\D{s} \right)\,,\\[5pt]
\Hat{M}_{R,i}^{n}(t) &\;\df\;\frac{1}{\sqrt{n}} 
\left( R_{i}^{n} \left(\gamma_{i}^{n} \int_{0}^{t} Q_{i}^{n}(s) \,\D{s} \right)
- \gamma_{i}^{n} \int_{0}^{t} Q_{i}^{n}(s) \,\D{s} \right),
\end{split}
\end{equation*}
are square integrable martingales w.r.t. the filtration $\{\calF^{n}_{t}\}$ with quadratic variations,
\begin{equation*}
\begin{split}
\langle \Hat{M}_{A,i}^{n}\rangle(t) &\;\df\;\frac{1}{n}\lam^n_i t\,,\quad
\langle\Hat{M}_{S,i}^{n} \rangle (t) \;\df\;\frac{1}{n}\mu_{i}^{n} \int_{0}^{t} Z_{i}^{n}(s)\,,\\[5pt]
\langle{\Hat{M}_{S,ij}^{n}}\rangle (t) &\;\df\;\frac{1}{n}
\mu_{i}^{n} \int_{0}^{t} Z_{ij}^{n}(s)\,\D{s} \,,\\[5pt]
\langle \Hat{M}_{R,i}^{n} \rangle (t) &\;\df\;\frac{1}{n}
\gamma_{i}^{n} \int_{0}^{t} Q_{i}^{n}(s) \,\D{s}.
\end{split}
\end{equation*}
Also recall $\Hat{U}^n(t)$ from \eqref{02} where we have $\hat{Z}^n_{ij} = \hat{U}_{ij}\, (\hat{X}^n_{j})^-$. We also have
$\hat{U}(t)\in \M(\Hat{X}^n(t))$ a.s. for all $t\geq 0$. Define $b^n:\Rd\times\M\to\Rd$ as
$$b^n_i(x, u) =\ell^n_i + \mu_i^n x_i^- -\sum_{j:j\neq i}\mu^n_{ij} u_{ij}x_j^- -\gam_i^n q(x, u),$$
where 
$$q_i(x, u) = x_i^+- \sum_{j: j\neq i} u_{ij} x_j^-.$$
By $\Uadm^n$ we denote the set of all admissible controls. The following result establishes uniform stability
of the queueing systems.

\begin{lemma}\label{lem-queue-stability}
There exists $n_0\geq 1$, such that for any $k\geq 1$,
\begin{align}\label{68}
\sup_{n\geq n_0}\,\sup_{Z\in\Uadm^n}\,\limsup_{T\to\infty}\frac{1}{T}
\Exp\Bigl[\int_0^T\abs{\Hat{X}^n(t)}^k\,\D{t} \Bigr]\; <\; \infty.
\end{align}
\end{lemma}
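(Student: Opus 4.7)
\medskip

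\noindent\emph{Proof proposal.}
The plan is to carry out a Foster--Lyapunov argument at the pre-limit level, using the same Lyapunov function $\Lyap$ from Lemma~\ref{lem-stability}. More precisely, fix $k\geq 1$ and let $\Lyap\in\calC^2(\Rd)$ be non-negative with $\Lyap(x)=\abs{x}^k$ for $\abs{x}\geq 1$, smoothed near the origin. I will write down the (controlled) extended generator $\mathcal{A}^n$ of $\Hat{X}^n$ and establish, uniformly over admissible controls and over $n\geq n_0$ large, a geometric drift inequality of the form
\begin{equation*}
\mathcal{A}^n \Lyap(\hat{x}) \;\leq\; \tfrac{3c_5}{2} - \tfrac{c_6}{2}\,\abs{\hat{x}}^k,
\end{equation*}
where $c_5,c_6$ are the constants of Lemma~\ref{lem-stability}. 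The bound then transfers to \eqref{68} through Dynkin's formula and localization.

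The generator estimate proceeds by Taylor expansion of the pure-jump generator. The jumps of $\Hat{X}^n$ are of size $\pm e_i/\sqrt{n}$ with rates $\lam^n_i$ (arrivals), $\mu^n_i Z^n_i$ (priority service), $\mu^n_{ij} Z^n_{ij}$ (help service), and $\gam^n_i Q^n_i$ (reneging). A direct bookkeeping shows that the first-order term reproduces exactly $b^n(\hat{x},\Hat{U}^n)\cdot\grad\Lyap(\hat{x})$ with $b^n$ as defined before the statement of the lemma (the arrival and priority-service contributions combine into $\ell^n_i+\mu^n_i(\hat{x}_i)^-$ after using $Z^n_i = n-\sqrt{n}(\hat{x}_i)^-$). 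Invoking Lemma~\ref{lem-stability} together with the Halfin--Whitt convergences in \eqref{HWpara} yields $b^n(\hat{x},u)\cdot\grad\Lyap(\hat{x}) + \tfrac{1}{2}\trace(a\grad^2\Lyap(\hat{x}))\leq c_5-c_6\abs{\hat{x}}^k$ for $n$ large and all $u\in\M(\hat{x})$, and the $\tfrac{1}{2}\trace(a\grad^2\Lyap)$ piece already accounts for the dominant part of the second-order jump term. The remaining second-order correction is controlled by
\begin{equation*}
\frac{1}{2n}\bigl[\lam^n_i+\mu^n_i Z^n_i+\textstyle\sum_j\mu^n_{ij}Z^n_{ij}+\gam^n_i Q^n_i\bigr]\,\partial_{ii}\Lyap(\hat{x})
\;=\; \order\bigl(\abs{\hat{x}}^{k-2}\bigr) + \order\bigl(\abs{\hat{x}}^{k-1}/\sqrt{n}\bigr),
\end{equation*}
using $Z^n_i\leq n$, $\lam^n_i=\order(n)$, and $Q^n_i+\sum_j Z^n_{ij}=\order(\sqrt{n}(1+\abs{\hat{x}}))$. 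The third- and higher-order Taylor remainders carry further factors of $1/\sqrt{n}$ and are absorbed similarly. Thus for $\abs{\hat{x}}$ and $n$ sufficiently large all error terms are dominated by $\tfrac{c_6}{2}\abs{\hat{x}}^k$, yielding the claimed drift inequality everywhere (after adjusting the additive constant to cover a bounded region).

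With the drift inequality in hand, Dynkin's formula applied up to $\uptau_R\wedge T$ gives
\begin{equation*}
0 \;\leq\; \Exp\bigl[\Lyap(\Hat{X}^n(\uptau_R\wedge T))\bigr] - \Lyap(\Hat{X}^n(0))
\;\leq\; \tfrac{3c_5}{2}T - \tfrac{c_6}{2}\,\Exp\Bigl[\int_0^{\uptau_R\wedge T}\abs{\Hat{X}^n(s)}^k\,\D{s}\Bigr],
\end{equation*}
and letting $R\to\infty$ (monotone convergence on the right, non-negativity of $\Lyap$ on the left) then dividing by $T$ and taking $\limsup$ yields \eqref{68}, since $\Hat{X}^n(0)\to x$ is bounded. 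The main obstacle is the bookkeeping in the second step: one must verify that the linear growth of the abandonment and help-service rates in $\abs{\hat{x}}$ does not spoil the uniformity over admissible (state-dependent) controls, but this follows because the factor $1/n$ in front of the jump-variance correction absorbs the $\sqrt{n}$ scaling of $Q^n_i$ and $Z^n_{ij}$. Uniformity over $\Hat{U}^n(t)\in\M(\Hat{X}^n(t))$ is automatic since Lemma~\ref{lem-stability} is already stated uniformly over $u\in\M(x)$.
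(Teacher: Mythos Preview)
Your proposal is correct and follows the same Foster--Lyapunov strategy as the paper: apply a Dynkin-type identity to $\Lyap(x)=\abs{x}^k$, bound the first-order (drift) term via the computation \eqref{17}--\eqref{18} with $b^n$ in place of $b$, and control the second- and higher-order contributions using that the jump sizes are $n^{-1/2}$ while the total jump rate is $\order(n)+\order(\sqrt{n}\,\abs{\hat x})$. The paper packages the same computation through It\^{o}'s formula for jump semimartingales (writing the jump-correction sum explicitly, \eqref{61}--\eqref{65}) rather than Taylor-expanding the generator, and it re-derives the drift inequality \eqref{62} for $b^n$ directly in the style of \eqref{17}--\eqref{18} instead of citing Lemma~\ref{lem-stability} plus parameter convergence---but the content is identical.
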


\begin{proof}
Without loss of generality, we take $k\geq 2$. Let $\varphi(x) \; \df\; \abs{x}^k$. Recall that $\Delta f(t)$ denotes
jump of a function $f:\R_+\to\R$ at time $t$. Components of $\Hat{X}^n$ jumps due to the jumps of their martingale parts.
Since the optional quadratic variation between martingales that corresponds to different components is $0$
(see \cite[Lemma~9.1]{whitt-et-al})
no two component jumps at the same time.
Now applying It\^{o}'s formula on $\varphi$ (see \cite[Theorem~26.7]{kallenberg})
we obtain from \eqref{60} that
\begin{align}\label{61}
\Exp[\varphi(\Hat{X}^n(t))] &\; =\; \Exp[\varphi(\Hat{X}^n(0))] + \Exp\Bigl[\int_0^t b^n(\Hat{X}^n(s), \Hat{U}^n(s))\cdot
\grad\varphi(\Hat{X}^n(s))\, \D{s}\Bigr]\nonumber
\\[5pt]
&\hspace{.2in}\quad+ \frac{1}{2}\sum_{i=1}^d \Exp\Bigl[\int_0^t \Theta_i( \Hat{Z}^n(s), \Hat{Q}^n)
\cdot\partial_{ii}\varphi(\Hat{X}^n(s))\, \D{s}\Bigr] + \nonumber
\\[5pt]
&\;\;\Exp\sum_{s\leq t}\Bigl[\Delta\varphi(\Hat{X}(s))-\sum_{i=1}^d \partial_i\varphi(\Hat{X}(s-))\Delta \Hat{X}^n_i(s)
-\frac{1}{2}\langle \Delta \Hat{X}^n(s), D^2\varphi(\Hat{X}^n(s-))\, \Delta \Hat{X}^n(s)\rangle\Bigr],
\end{align}
where $D^2\varphi$ denotes Hessian of $\varphi$ and,
$$\Theta_i(z, y)\;\df\; \frac{1}{n}\lam_i^n + \frac{1}{\sqrt{n}}\mu_i^n z^n_i + \mu^n_i +
\frac{1}{\sqrt{n}}\sum_{j: j\neq i} z_{ij} + 
\frac{1}{\sqrt{n}}\gam^n_i y_i.$$ Using \eqref{HWpara} 
we can choose $n$ large enough so that $\min_{i}(\mu^N_i\wedge\gam^n_i)>0$. An argument similar to \eqref{17}-\eqref{18}
shows that for some positive constants $\kap_1, \kap_2 $, independent of $n$, we have
\begin{equation}\label{62}
b^n(\Hat{X}^n(t), \Hat{U}^n(t))\cdot \grad\varphi(\Hat{X}^n(t))\;\leq \; \kap_1 - \kap_2\,\abs{\Hat{X}^n(t)}^k, \quad a.s., 
\quad \text{for all}\;\; t\geq 0.
\end{equation}
Also from \eqref{xp3} we obtain $\Hat{Z}^n_i=(\Hat{X}^n_i)^-$ for all $i$. Thus for all $t\geq 0,$ we have
\begin{equation}\label{63}
\Big|\Theta_i( \Hat{Z}^n(t), \Hat{Q}^n(t))\cdot \partial_{ii}\varphi(\Hat{X}^n(t))\Big|\;
\leq \kap_3\, (1+ \abs{\Hat{X}^n(t)}^{k-1}), \quad \text{for all}\; \; i\in\{1, \ldots, d\},
\end{equation} 
for some constant $\kap_3$ and all large $n$, where we use \eqref{xp2}. We observe that 
$\abs{\Delta \hat{X}^n(t)}\leq \frac{1}{\sqrt{n}}$ for all $t$. Hence a straightforward calculation gives us
\begin{align}\label{64}
&\Bigl[\Delta\varphi(\Hat{X}(s))-\sum_{i=1}^d \partial_i\varphi(\Hat{X}(s-))\Delta \Hat{X}^n_i(s)
-\frac{1}{2}\langle \Delta \Hat{X}^n(s), D^2\varphi(\Hat{X}^n(s-))\, \Delta \Hat{X}^n(s)\rangle\Bigr]\nonumber
\\[5pt]
&\leq \frac{\kap_4}{\sqrt{n}}\, (1+ \abs{\Hat{X}^n(s-)}^{k-2})\sum_{i=1}^d (\Delta \Hat{X}^n_i(s))^2.
\end{align}
Since $\sum_{s\leq t}(\Delta \Hat{X}^n_i(s))^2\;\df\; [\Hat{X}^n_i](t)$ where $[\Hat{X}^n_i]$ is the optional quadratic variation,
we get from \eqref{64} that
\begin{align}\label{65}
&\Exp\sum_{s\leq t}\Bigl[\Delta\varphi(\Hat{X}(s))-\sum_{i=1}^d \partial_i\varphi(\Hat{X}(s-))\Delta \Hat{X}^n_i(s)
-\frac{1}{2}\langle \Delta \Hat{X}^n(s), D^2\varphi(\Hat{X}^n(s-))\, \Delta \Hat{X}^n(s)\rangle\Bigr]\nonumber
\\[5pt]
&\leq \frac{\kap_4}{\sqrt{n}}
\sum_{i=1}^d \Exp\Big[ \int_0^t \Theta_i(\Hat{Z}^n(s), \Hat{Q}^n)\Big(1+ \abs{\Hat{X}^n_i(s)}^{k-2}\Big) \D{s}\Big]\nonumber
\\[5pt]
&\leq \frac{\kap_5}{\sqrt{n}}\Bigl(t + \Exp\Big[ \int_0^t \abs{\Hat{X}^n(s)}^{k-1} \D{s}\Big]
\Bigr),
\end{align}
for some constant $\kap_5$, where we use the fact that $[\Hat{X}^n_i]-\langle \Hat{X}^n_i\rangle $ is also a martingale.
Therefore combining \eqref{61}--\eqref{63}, and \eqref{65} we obtain constants $\kap_6, \kap_7>0$, independent of $\Uadm^n$,
such that
$$\Exp[\varphi(\Hat{X}^n(t))] \; =\; \Exp[\varphi(\Hat{X}^n(0))] +\kap_6 t -\kap_7 \Exp\Big[\int_0^t\abs{\Hat{X}^n(s)} \D{s}\Big],$$
for all large $n$. Since $\{\Hat{X}^n(0)\}$ is bounded, we obtain \eqref{68} from the above display.
\end{proof}
For any $n\geq 1$ and $(\Hat{X}^n, \hat{U}^n)$ satisfying \eqref{dc2} and \eqref{02} we define mean-empirical measures 
$\xi^n_t\in\mathcal{P}(\Rd\times\M)$ as follows: For Borel $A\in\mathcal{B}(\Rd)$, $B\in\mathcal{\M}$,
\begin{equation}\label{70}
\xi^n_t(A\times B)\; \df\; \frac{1}{t}\Exp\Bigl[\int_0^t 1_{A\times B}(\Hat{X}^n(s), \Hat{U}^n(s))\, \D{s}\Bigr], \quad t\,>\, 0\, .
\end{equation}
Let 
$$\Tilde{\Theta}^n_i\; \df\; \frac{1}{n}\lam_i^n + \frac{1}{\sqrt{n}}\mu_i^n x_i^- + \mu^n_i +
\frac{1}{\sqrt{n}}\sum_{j: j\neq i} u_{ij}x_j^- + 
\frac{1}{\sqrt{n}}\gam^n_i q_i(x, u),$$
where $q$ is given by \eqref{0000}. From \eqref{HWpara} we have $\Tilde{\Theta}^n_i\to 2 \lam_i$, as $n\to\infty$, uniformly
on compacts. 


\begin{lemma}\label{L5.2}
Consider $n\geq n_0$, where $n_0$ is given by Lemma~\ref{lem-queue-stability}.
For $\Hat{U}^n\in\Uadm^n,$ we define $\xi^n_t$ as in \eqref{70}. Then the collection
$\{\xi^n, \, t\,>\,0\}$ is tight, as $t\to\infty$, and if $\uppi^n$ is a sub-sequential limit of $\{\xi^n, \, t\,>\,0\}$, then we have
$$\int_{\Rd\times\M} 1_{\M^c(x)}(u) \uppi^n(\D{x}, \D{u})\;=\;0.$$
\end{lemma}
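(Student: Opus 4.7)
The plan splits into two essentially independent tasks: tightness of $\{\xi^n_t\}_{t>0}$ as $t\to\infty$, and verification that any subsequential limit concentrates on $\{(x,u):u\in\M(x)\}$.

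For tightness, I would first note that the $\M$-marginal of $\xi^n_t$ is automatically tight, since $\M$ is a bounded closed (hence compact) subset of $\R^{d\times d}$. It therefore suffices to exhibit tight $\Rd$-marginals. This is immediate from Lemma~\ref{lem-queue-stability}: for any fixed $n\geq n_0$ and (say) $k=1$,
$$\int_{\Rd\times\M}\abs{x}\,\xi^n_t(\D x,\D u)\;=\;\frac{1}{t}\,\Exp\Bigl[\int_0^t\abs{\Hat X^n(s)}\,\D s\Bigr]$$
is uniformly bounded for all sufficiently large $t$, and Markov's inequality then yields the required tightness of the $\Rd$-marginals. Consequently $\{\xi^n_t\}_{t>0}$ is tight on $\Rd\times\M$.

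For the support identity, the crucial structural observation is that the set
$$A\;\df\;\{(x,u)\in\Rd\times\M:u\in\M^c(x)\}$$
is an \emph{open} subset of $\Rd\times\M$. From the definition \eqref{con-set}, membership in $A$ is equivalent to the existence of some $i\in\{1,\dotsc,d\}$ with $\sum_{j\neq i}u_{ij}x_j^->x_i^+$, and each such strict inequality defines an open condition by joint continuity of $(x,u)\mapsto\sum_{j\neq i}u_{ij}x_j^--x_i^+$. Hence $A$ is a finite union of open sets, so $1_A(x,u)=1_{\M^c(x)}(u)$ is lower-semicontinuous on $\Rd\times\M$.

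Now admissibility forces $\Hat U^n(s)\in\M(\Hat X^n(s))$ almost surely for every $s\geq 0$, which gives $\xi^n_t(A)=0$ for every $t>0$. Passing to a weakly convergent subsequence $\xi^n_{t_k}\Rightarrow\uppi^n$ and invoking the Portmanteau theorem for the open set $A$ yields
$$\uppi^n(A)\;\leq\;\liminf_{k\to\infty}\xi^n_{t_k}(A)\;=\;0,$$
which is exactly the claim. The only mildly delicate point in this entire argument is the openness of $A$, and this is really a structural feature of the state-dependent set \eqref{con-set}; beyond that, everything is routine.
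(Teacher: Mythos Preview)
Your proof is correct and follows essentially the same route as the paper: tightness via the moment bound of Lemma~\ref{lem-queue-stability}, and the support identity via lower-semicontinuity of $(x,u)\mapsto 1_{\M^c(x)}(u)$ combined with $\xi^n_t(A)=0$ for all $t$. The only cosmetic difference is that the paper approximates $1_{\M^c(x)}(u)$ from below by bounded continuous functions $g_j$ and passes to the limit in $j$, whereas you invoke the Portmanteau inequality for the open set $A$ directly; these are equivalent formulations of the same weak-convergence fact.
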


\begin{proof}
From \eqref{68} we obtain that 
$$\limsup_{T\to\infty}\int_{\Rd\times\M}\abs{x}^k\; \xi^n_T(\D{x}, \D{u})\, <\infty,$$
for any $k\geq 1$. This implies that $\{\xi^n, \, t\,>\,0\}$ is tight. Let $\uppi^n$ be a sub-sequential limit and
$\xi^n_{t_l}\to \uppi^n$ as $t_l\to\infty$. Since $\Hat{U}^n(\Hat{X}^n(s))\in\M(\Hat{X}^n(s))$ for all $s$, we have
from \eqref{70} that
\begin{equation}\label{71}
\int_{\Rd\times \M} 1_{\M^c(x)}(u) \xi^n_{t}(\D{x}, \D{u})=0, \quad \forall\; t\; >\; 0.
\end{equation}
We note that $(x, u)\mapsto 1_{\M^c(x)}(u)$ is a lower-semicontinuous function. Thus there exists a sequence of 
bounded, continuous functions $g_j$ such that $g_j(x, u)\nearrow 1_{\M^c(x)}(u)$ as $j\to\infty$ \cite[Proposition~2.1.2]{krantz}.
Therefore \eqref{71} gives 
$$\int_{\Rd\times \M} g_j(x, u)\, \uppi^n(\D{x}, \D{u})=\lim_{t_l\to\infty} \int_{\Rd\times \M} g_j(x, u)\, \xi^n_{t_l}(\D{x}, \D{u})=0.$$
Now let $j\to\infty$, to complete the proof.
\end{proof}

Now we establish asymptotic lower-bound of the value functions $\widehat{V}^n$.

\begin{theorem}\label{T-lower}
As $n\to\infty$, $\widehat{V}^n(\Hat{X}^n(0))\;\geq \;\varrho_{*}$, where $\varrho_{*}$ is given by \eqref{0002}.
\end{theorem}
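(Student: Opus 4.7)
The plan is to realize $\varrho_{*}$ as a lower bound by extracting, for an asymptotically optimal sequence of policies, a limiting probability measure on $\Rd\times\M$ that lies in the set $\mathcal{G}$ from \eqref{0001}, and then invoking Theorem~\ref{T-exist}(a) which identifies $\varrho_{*}$ with $\inf_{\uppi\in\mathcal{G}}\uppi(\Tilde r)$. We may assume $\liminf_n \Hat V^n(\Hat X^n(0))<\infty$, for otherwise the inequality is vacuous. Fix any subsequence realizing this $\liminf$, and along it choose $\Hat Z^n\in\Uadm^n$ with
\[
J(\Hat X^n(0),\Hat Z^n)\;\le\; \Hat V^n(\Hat X^n(0))+\tfrac1n.
\]
Since $\Hat U^n(t)\in\M(\Hat X^n(t))$ almost surely, we have $q_i(\Hat X^n(t),\Hat U^n(t))\ge 0$ for each $i$, whence $r(\Hat Q^n(t))=\Tilde r(\Hat X^n(t),\Hat U^n(t))$.

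First, using Lemma~\ref{lem-queue-stability} I would pick $T_n\nearrow\infty$ so slowly that
\[
\tfrac1{T_n}\Exp\!\Bigl[\int_0^{T_n}\!\Tilde r(\Hat X^n(s),\Hat U^n(s))\,\D s\Bigr]
\;\le\; \Hat V^n(\Hat X^n(0))+\tfrac2n
\quad\text{and}\quad
\sup_n\tfrac1{T_n}\Exp\!\Bigl[\int_0^{T_n}\!\abs{\Hat X^n(s)}^k\,\D s\Bigr]<\infty
\]
for some $k$ exceeding the growth exponent $m$ in \eqref{cost1}. Let $\xi^n\df\xi^n_{T_n}$ be the empirical measure from \eqref{70}. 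The moment bound forces $\{\xi^n\}$ tight on $\Rd\times\M$; along a further subsequence $\xi^n\to\uppi$ for some $\uppi\in\cP(\Rd\times\M)$. The same lower-semicontinuity argument used in Lemma~\ref{L5.2} (the function $(x,u)\mapsto 1_{\M^c(x)}(u)$ is lower-semicontinuous, hence an increasing limit of bounded continuous $g_j$) transfers the support property through the limit: $\uppi(\{(x,u):u\notin\M(x)\})=0$.

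Next, I verify the infinitesimal-invariance condition in \eqref{0001}. For $f\in\calC^\infty_c(\Rd)$, apply It\^o's formula for jump processes to $f(\Hat X^n(\cdot))$ using \eqref{60}, obtaining
\[
f(\Hat X^n(T_n))\;=\;f(\Hat X^n(0))+\int_0^{T_n}\!L^{n,\Hat U^n(s)}f(\Hat X^n(s))\,\D s+\Hat M^n_f(T_n)+\mathcal{E}^n,
\]
where $L^{n,u}$ is the generator built from the prelimit drift $b^n$ and diffusion $\Tilde\Theta^n$, $\Hat M^n_f$ is a martingale whose quadratic variation grows at most linearly in $T_n$, and $\mathcal{E}^n$ collects the second-order jump correction terms bounded by $\kap\, n^{-1/2}T_n\,(1+\sup\Exp\abs{\Hat X^n}^{k-2})$. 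Dividing by $T_n$ and taking expectation, the left-hand side is $O(1/T_n)$ because $f$ is compactly supported, the martingale term vanishes, and the error $\mathcal E^n/T_n\to 0$ by the moment bound. Hence
\[
\int_{\Rd\times\M}L^{n,u}f(x)\,\xi^n(\D x,\D u)\;\xrightarrow[n\to\infty]{}\;0.
\]
Because $b^n\to b$ and $\Tilde\Theta^n_i\to 2\lambda_i$ uniformly on compacts (by \eqref{HWpara}) and $f$ has compact support, $L^{n,u}f\to L^uf$ uniformly on $\Rd\times\M$. Combined with the weak convergence $\xi^n\to\uppi$ and the uniform moment bounds to control tail effects, we obtain $\int L^u f\,\D\uppi=0$. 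Thus $\uppi\in\mathcal{G}$.

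Finally, the running cost $\Tilde r\ge 0$ together with its polynomial bound \eqref{cost1} and the uniform $k$-th moment control ensures uniform integrability of $\Tilde r$ against $\{\xi^n\}$, so
\[
\liminf_{n\to\infty}\Hat V^n(\Hat X^n(0))\;\ge\;\liminf_{n\to\infty}\xi^n(\Tilde r)\;\ge\;\uppi(\Tilde r)\;\ge\;\Tilde\varrho_{*}\;=\;\varrho_{*},
\]
using Theorem~\ref{T-exist}(a) in the last equality.

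The main obstacle is justifying the double passage to the limit $(T_n\to\infty,\,n\to\infty)$ in the infinitesimal-invariance identity. Two issues need care: the jump-correction terms from It\^o's formula (which are $O(1/\sqrt n)$ per unit time and hence must be dominated by moment bounds uniform in both $n$ and $t$), and the lack of continuity of $(x,u)\mapsto 1_{\M^c(x)}(u)$, which forces us to pass through a lower-semicontinuous approximation. Both are resolved exactly by the tools already established (Lemma~\ref{lem-queue-stability} for the uniform moment estimates, and the $g_j$ approximation used in Lemma~\ref{L5.2}).
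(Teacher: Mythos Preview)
Your proof is correct and follows essentially the same route as the paper: empirical occupation measures, tightness via the uniform moment bounds of Lemma~\ref{lem-queue-stability}, It\^o's formula with jump corrections to obtain infinitesimal invariance, the lower-semicontinuity argument for the constraint $u\in\M(x)$, and the identification $\varrho_{*}=\tilde\varrho_{*}$ from Theorem~\ref{T-exist}. The only organizational difference is that the paper takes a two-stage limit (first $t\to\infty$ to obtain $\uppi^n$, then $n\to\infty$ to obtain $\uppi$), whereas you run a diagonal sequence $T_n\to\infty$ and pass to the limit in one step; both are valid and rest on the same estimates.
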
 

\begin{proof}
Consider a sequence $Z^n\in\Uadm^n$ and let $\{\xi^n_t\}$ be the associated mean-empirical measures as defined 
in \eqref{70}. From \eqref{L5.2} we obtain that for $n\geq n_0$,  the collection $\{\xi^n_t\}$ is tight. Let 
$\uppi^n$ be a subsequential limit of $\{\xi^n_t\}$ as $t\to \infty$. Taking $k>m$ in Lemma~\ref{lem-queue-stability},
we obtain that
\begin{equation}\label{72}
\sup_{n\geq n_0} \int_{\Rd\times\M}\Tilde{r}(x, u) \, \uppi^n(\D{x}, \D{u})\; <\; \infty, \quad\text{and},\;\;
\sup_{n\geq n_0} \int_{\Rd\times\M}\abs{x}^k \, \uppi^n(\D{x}, \D{u})\; <\; \infty.
\end{equation}
Thus from \eqref{72} we see that the sequence $\{\uppi^n\; : \; n\geq 1\}$ is also a tight sequence.
Let $\uppi$ be a sub-sequential limit of $\{\uppi^n\; : \; n\geq 1\}$ as $n\to\infty$. We show that
$\uppi\in\mathcal{G}$ where $\mathcal{G}$ is given by \eqref{0001}.
Consider $f\in\calC^2_c(\Rd)$ and apply It\^{o}'s formula
in \eqref{60} to obtain
\begin{align*}
\Exp[f(\Hat{X}^n(t))] &\; =\; \Exp[f(\Hat{X}^n(0))] + \Exp\Bigl[\int_0^t b^n(\Hat{X}^n(s), \Hat{U}^n(s))\cdot
\grad f(\Hat{X}^n(s))\, \D{s}\Bigr]
\\[5pt]
&\hspace{.2in}\quad+ \frac{1}{2}\sum_{i=1}^d \Exp\Bigl[\int_0^t \Tilde\Theta^n_i(\Hat{X}^n(s), \Hat{U}^n(s))
\cdot\partial_{ii} f(\Hat{X}^n(s))\, \D{s}\Bigr] +
\\[5pt]
&\;\;\Exp\sum_{s\leq t}\Bigl[\Delta f(\Hat{X}(s))-\sum_{i=1}^d \partial_i f(\Hat{X}(s-))\Delta \Hat{X}^n_i(s)
-\frac{1}{2}\langle \Delta \Hat{X}^n(s), D^2 f(\Hat{X}^n(s-))\, \Delta \Hat{X}^n(s)\rangle\Bigr],
\end{align*}
and therefore dividing by $t$, we get
\begin{align*}
\frac{\Exp[f(\Hat{X}^n(t))]}{t} &\; =\; \frac{\Exp[f(\Hat{X}^n(0))]}{t} + 
\int_{\Rd\times\M} b^n(x, u)\cdot
\grad f(x)\, \xi^n_t(\D{x}, \D{u})
\\[5pt]
&\hspace{.2in}\quad+ \frac{1}{2}\sum_{i=1}^d \int_{\Rd\times\M} \Tilde\Theta^n_i(x, u)
\cdot\partial_{ii} f(x)\, \xi^n_t(\D{x}, \D{u})
\\[5pt]
&\;\;\frac{1}{t}\Exp\sum_{s\leq t}\Bigl[\Delta f(\Hat{X}(s))-\sum_{i=1}^d \partial_i f(\Hat{X}(s-))\Delta \Hat{X}^n_i(s)
-\frac{1}{2}\langle \Delta \Hat{X}^n(s), D^2 f(\Hat{X}^n(s-))\, \Delta \Hat{X}^n(s)\rangle\Bigr].
\end{align*}
Now let $t\to\infty$, and use a similar argument as in \eqref{65} to obtain
\begin{equation}\label{73}
\int_{\Rd\times\M} \Big(\frac{1}{2}\sum_{i=1}^d\Tilde\Theta^n_i(x, u)\cdot\partial_{ii} f(x) + b^n(x, u)\cdot\grad f(x)\Big)\,
 \uppi^n(\D{x}, \D(u)) = \order(\frac{1}{\sqrt{n}}).
\end{equation}
therefore letting $n\to\infty$, in \eqref{73} and using locally uniform convergence
property of $\Tilde{\Theta}^n, b^n$ we get
$$\int_{\Rd\times\M} L^u f(x)\, \uppi(\D{x}, \D(u))\; =\; 0,$$
where $L^u$ is given by \eqref{E3.3}. Therefore to show $\uppi\in\mathcal{G}$ it remains to prove that
$$\int_{\Rd\times\M}1_{\M^c(x)}(u)\, \uppi(\D{x}, \D{u})=0.$$
But this follows using the second part of Lemma~\ref{L5.2} and lower semicontinuity of the map. We also have 
$$\liminf_{n\to\infty} \int_{\Rd\times\M}\tilde{r}(x, u)\, \uppi^n(\D{x}, \D{u})\;\geq  \int_{\Rd\times\M}\tilde{r}(x, u)\, \uppi(\D{x}, \D{u}).$$
Hence using Theorem~\ref{T-exist}, we conclude that
$$\liminf_{n\to\infty}\, \limsup_{T\to\infty}\, \frac{1}{T} \Exp\Bigl[\int_0^T r(\Hat{Q}^n(s))\,\D{s}\Bigr]\; \geq \varrho_{*}.$$
\end{proof}
 Next we proceed to prove the asymptotic upper bound. The idea is to construct a sequence of admissible policies that
 achieves $\varrho_*$. One main obstacle with such construction is that the minimizer of the HJB \eqref{HJB} 
 might not be continuous,
 in general. Therefore we use the perturbed HJB \eqref{eps-HJB}. Let $u:\Rd\to\M$ be a continuous function and $u(x)\in\M(x)$
 for all $x$. Using $u$ we construct an admissible policy for every $n$ as follows. Recall that $\lfl a\rfl$ denotes the largest 
 integer small or equal to $a\in\R$. For $X^n(t)\in\Rd_+$, we define,
 \begin{align*}
 Z^n_i(t) & \;\df \; X^n(t)\wedge n, 
 \\[5pt]
 Z^{n}_{ki}(t) & \;\df\; \lfl u_{ki}(\Hat{X}^n(t)) (X^n_i(t)-n)^-\rfl, \quad i\neq k,
 \end{align*}
 where $\Hat{X}^n$ denotes the scaled version of $X^n$ under diffusion settings. We also define
 $$Q^n_i(t) = X^n_i(t) - Z^n_i(t) - \sum_{j: j\neq i} Z^n_{ij}(t), \quad\text{for}\;\; i\in\{1, \ldots, d\}.$$
We check that for $i\in\{1, \ldots, d\}$
\begin{align*}
Z^n_i(t) + \sum_{k: k\neq i} Z^n_{ki}(t) &\leq X^n(t)\wedge n + \sum_{k: k\neq i}u_{ki}(\Hat{X}^n(t)) (X^n_i(t)-n)^-
\\[5pt]
&\leq X^n(t)\wedge n + (X^n_i(t)-n)^-
\\[5pt]
&\leq n,
\end{align*}
and 
\begin{align*}
\sum_{j:j\neq i} Z^n_{ij}(t) &\leq \sum_{j:j\neq i}u_{ij}(\Hat{X}^n(t)) (X^n_j(t)-n)^-
\\[5pt]
&= \sqrt{n}\,\sum_{j:j\neq i}u_{ij}(\Hat{X}^n(t)) (\Hat{X}^n_j(t))^-
\\
&\leq (X^n_i(t)-n)^+.
\end{align*}
Therefore $Z^n\in\Uadm$ for all $n$. It is easy to see that
\begin{equation*}
\abs{\Hat{Z}^n_{ij}- u_{ij}(\Hat{X}^n_j)\,(\Hat{X}^n_j)^-}\; \leq \frac{1}{\sqrt{n}}, \quad \text{for all}\, \; i\neq j.
\end{equation*}

\begin{theorem}\label{T-upper}
We have
$$\limsup_{n\to\infty} \widehat{V}^n(\Hat{X}^n(0))\;\leq \;\varrho_{*},$$
where $\varrho_{*}$ is given by \eqref{0002}.
\end{theorem}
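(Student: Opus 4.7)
The plan is to build an $\eps$-optimal admissible sequence using the continuous selector from Theorem~\ref{T-HJB}, transport the ergodic cost to the limiting diffusion via empirical measures as in the proof of Theorem~\ref{T-lower}, and finally let $\eps\to0$. Fix $\eps\in(0,1)$ and let $u_\eps:\Rd\to\M$ be the unique continuous selector of \eqref{eps-HJB} (Theorem~\ref{T-HJB}(i)). Construct $Z^{n,\eps}\in\Uadm^n$ exactly as prescribed in the paragraph preceding the statement, so that $Z^{n,\eps}$ is admissible and
\begin{equation*}
\bigl|\Hat{Z}^{n}_{ij}(t)-u_{\eps,ij}(\Hat{X}^{n}(t))\,(\Hat{X}^{n}_{j}(t))^-\bigr|\;\leq\;\tfrac{1}{\sqrt{n}}\,,\qquad i\neq j\,.
\end{equation*}

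Next I would apply Lemma~\ref{lem-queue-stability} under this policy: it yields uniform moment bounds on $\Hat X^n$ in $t$ and $n\geq n_0$, hence tightness of the empirical measures $\xi^n_t$ from \eqref{70}. Let $\uppi^n$ be any subsequential limit as $t\to\infty$; by Lemma~\ref{L5.2} it is supported on $\{(x,u):u\in\M(x)\}$, and by \eqref{68} with $k>m$, the family $\{\uppi^n\}_{n\geq n_0}$ is itself tight with uniformly bounded $\langle|x|^k,\uppi^n\rangle$. Along a further subsequence $\uppi^n\to\uppi$, and the exact It\^o computation from the proof of Theorem~\ref{T-lower} (apply It\^o to $f\in\calC_c^2(\Rd)$, divide by $t$, take $t\to\infty$ then $n\to\infty$, using locally uniform convergence of $b^n\to b$ and $\Tilde\Theta^n\to2\lambda$) shows $\uppi\in\mathcal{G}$.

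The crucial new step is identifying $\uppi$. Because $u_\eps$ is \emph{continuous} on $\Rd$, the bound above together with $\Hat X^n(t)\in\{x:(x_j)^-\leq|x|\}$ and the moment estimates forces, for any $g\in\calC_b(\Rd\times\M)$,
\begin{equation*}
\int g(x,u)\,\uppi(\D{x},\D{u})\;=\;\int g(x,u_\eps(x))\,\mu(\D{x})\,,
\end{equation*}
where $\mu$ is the marginal of $\uppi$ on $\Rd$. Thus $\uppi(\D{x},\D{u})=\delta_{u_\eps(x)}(\D{u})\mu(\D{x})$, so that $\mu$ is an invariant probability measure for the controlled diffusion under the stationary Markov control $u_\eps$; by non-degeneracy of $\Sigma\Sigma^{\mathsf T}$ this invariant measure is unique. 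Combining this identification with the fact $\tilde r\leq \tilde r_\eps$ and the optimality of $u_\eps$ for $\varrho_\eps$ (Theorem~\ref{T-HJB}(i), Lemma~\ref{L4.6}) gives
\begin{equation*}
\int_{\Rd\times\M}\tilde r(x,u)\,\uppi(\D{x},\D{u})\;=\;\int_{\Rd}\tilde r(x,u_\eps(x))\,\mu(\D{x})\;\leq\;\varrho_\eps\,.
\end{equation*}

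Finally, because $r$ has at most polynomial growth and the moment bound from Lemma~\ref{lem-queue-stability} provides uniform integrability, I would pass to the $\liminf$ in the queueing cost:
\begin{equation*}
\limsup_{T\to\infty}\tfrac1T\Exp\Bigl[\int_0^T r(\Hat Q^n(s))\,\D s\Bigr]
\;=\;\lim_{t_l\to\infty}\int\tilde r(x,u)\,\xi^n_{t_l}(\D x,\D u)\;\longrightarrow\;\int\tilde r\,\D\uppi\leq\varrho_\eps\,,
\end{equation*}
which yields $\limsup_{n\to\infty}V^n(\Hat X^n(0))\leq\varrho_\eps$. Since $\varrho_\eps\searrow\varrho_*$ as $\eps\to0$ (the two running costs differ by $O(\eps)$ on the bounded set $\M$), the conclusion follows. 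The main obstacle is the identification of $\uppi$ as a Dirac measure in $u$: this is where continuity of $u_\eps$ (hence the choice of the \emph{perturbed} HJB rather than \eqref{HJB} directly) is essential, since a merely measurable selector would not pass to the limit against continuous test functions.
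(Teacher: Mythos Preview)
Your argument is correct and matches the paper's approach: construct admissible policies from the continuous selector $u_\eps$, take empirical-measure limits, identify the limit as the (unique) invariant measure under $u_\eps$, and send $\eps\to0$. The paper's only simplification is to work with the $x$-marginal empirical measures on $\Rd$ rather than on $\Rd\times\M$, which bypasses your Dirac identification step; that step is nonetheless justified, since continuity of $u_\eps$ together with \eqref{12} yields $\lfloor\sqrt n\,u_{\eps,ij}(x)x_j^-\rfloor/(\sqrt n\,x_j^-)\to u_{\eps,ij}(x)$ uniformly on compacts, so the paper's route and yours coincide.
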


\begin{proof}
Since $\varrho_\eps\searrow\varrho_{*},$ as $\eps\to 0$, it is enough to show that there exists a sequence of admissible 
policy $Z^n\in\Uadm^n$ satisfying
\begin{equation}\label{75}
\limsup_{n\to\infty} J(\Hat{X}^n(0), \Hat{Z}^n)\;\leq \;\varrho_{\eps}\,,
\end{equation}
where $J(\Hat{X}^n(0), \Hat{Z}^n)$ is defined in \eqref{costd1}. Let $u_\eps$ be
the minimizing selector of \eqref{eps-HJB}. Now we construct a sequence of policy $Z^n$ as above given $u_\eps$. We define
empirical measures $\tilde\xi^n_t\in\mathcal{P}(\Rd)$ as
$$\tilde\xi^n_t(A) \;\df\; \frac{1}{t}\Exp_x\Big[ \int_0^t 1_A (\Hat{X}^n(s))\, \D{s}\Big], \quad t>0.$$
From Lemma~\ref{lem-queue-stability} we see that $\{\tilde\xi^n_t\}$ is tight as $t\to\infty$, and collection of the sub sequential limits
of  $\{\tilde\xi^n_t\}$, denoted by $\{\tilde\mu^n\}$, is also tight. Let $\tilde\mu$ be a sub-sequential limit of $\{\tilde\mu^n\}$. We claim that
\begin{equation}\label{76}
\int_{\Rd} \Big(\frac{1}{2}\sum_{i, j=1}^d a_{ij}\partial_{ii} f(x) + b(x, u_\eps(x))\cdot \grad f(x)\Big) \tilde\mu(\D{x})=0,
\quad \forall\; f\in\calC^2_c(\Rd).
\end{equation}
In fact, the claim follows from a similar argument as in Theorem~\ref{T-lower}. \eqref{76} shows that $\tilde\mu$ is the unique invariant
measure corresponds to the Markov control $u_\eps$. Thus to complete the proof we only need to show that
$$\lim_{n\to\infty} J(\Hat{X}^n(0), \hat{Z}^n)=\int_{\Rd}\tilde{r}(x, u_\eps(x))\, \tilde\mu(\D{x}).$$
In view of Lemma~\ref{lem-queue-stability}, to show \eqref{75} it is enough to show that for any $\psi\in\calC_c(\Rd)$, we have
\begin{equation}\label{80}
\lim_{n\to\infty}\int_{\Rd} \psi(x) \tilde{r}(x, u^n(x))\, \tilde{\mu}^n(\D{x}) =\int_{\Rd} \psi(x) \tilde{r}(x, u_\eps(x))\, \tilde{\mu}(\D{x}),
\end{equation}
where 
$$u^n_{ij}(x)\, x_j^- =\frac{1}{\sqrt{n}} \lfl \sqrt{n} u_\eps(x) x_j^-\rfl, \quad \text{for}\;\; i\neq j.$$
Since $\tilde{\mu}^n\to \tilde{\mu}$ and $|\tilde{r}(x, u^n(x))-\tilde{r}(x, u_\eps(x))|\to 0$, as $n\to\infty$,
uniformly on compacts, \eqref{80} follows.
\end{proof}

We conclude the article with two important  remarks.
\begin{remark}\label{con-rem-1}
It is evident from the arguments that one can replace $r(\Hat{Q}^n)$ by  $r(\Hat{Q}^n)+ h(\Hat{Z}^n)$ for some
convex function $h:\M\to\R_+$ that lies in $\calC_{\rm pol}(\Rd)$.
 Since $Z^n$ contains information about the idle times
and the customers in service, one may want to minimize the cumulative idle time by associating such cost. For instance,
if the service rate $\mu^n_{ij}$ (the rate at which class-$i$ customers are served at station $j$) is very small then it is logical to add
a cost of type $h(\Hat{Z}^n)$ with a high payoff  associated to $Z^n_{ij}$.
\end{remark}

\begin{remark}\label{con-rem-2}
One might also put additional constrains on the service mechanism
so that certain class of customers do not get served by some
pools. The arguments of this article still go through in that case. 
Note that under such constrains we have $i\nrightarrow j$ for some $i, j$ with $i\neq j$.
The only required change in such case is to restrict the corresponding
entries of the matrices in $\M$ to $0$ whenever $i\nrightarrow j$ for $i\neq j$.
\end{remark}

\subsection*{Acknowledgements} The author would like to thank the Editor and the anonymous referees for their helpful
comments in improving the article.
Author is grateful to Prof. Rami Atar for his comments on this project. 
Thanks are due to Prof. Kavita Ramanan for suggesting references related to this project. This research
is partly supported by an INSPIRE faculty fellowship.

\bibliographystyle{plain}
\bibliography{refs}

\end{document}